\newtheoremstyle{mystyle}
  {}
  {}
  {}
  {}
  {}
  {.}
  { }
  {{\underline{\thmname{#1}\thmnumber{ #2}~\thmnote{(#3)}}}}
\newtheoremstyle{mystyle2}
  {}
  {}
  {\itshape}
  {}
  {}
  {.}
  { }
  {{\underline{\thmname{#1}\thmnumber{ #2}~\thmnote{(#3)}}}}
\theoremstyle{definition}
\theoremstyle{mystyle}
\newtheorem{defi}{Definition}[section]
\newtheorem{rmk}[defi]{Remark}
\newtheorem{lemma}[defi]{Lemma}
\newtheorem*{lemma*}{Lemma}
\theoremstyle{plain}
\theoremstyle{mystyle2}
\newtheorem{thm}[defi]{Theorem}
\newtheorem{prop}[defi]{Proposition}
\newtheorem{cor}[defi]{Corollary}
\newcommand{\R}{\mathbb{R}}
\newcommand{\N}{\mathbb{N}}
\newcommand{\Z}{\mathbb{Z}}
\newcommand{\p}{\mathbb{P}}
\newcommand{\E}{\mathbb{E}}
\newcommand{\var}{\mathrm{var}}
\newcommand{\e}{\mathrm{e}}
\newcommand{\dd}{\mathrm{d}}
\newcommand{\1}{\mathbbm{1}}
\newcommand{\supp}{\mathrm{supp}}
\newcommand{\Cov}{\mathrm{Cov}}
\newcommand{\sk}{\vspace*{\baselineskip}}
\newcommand{\grad}{\nabla}
\title{Hydrodynamic limit of the Symmetric Exclusion Process on a compact Riemannian manifold}
\author{Bart van Ginkel\footnote{G.J.vanGinkel@tudelft.nl} \and Frank Redig\footnote{F.H.J.Redig@tudelft.nl}}
\date{%
    TU Delft\\
    \today
}
\begin{document}
\maketitle

\begin{abstract}
We consider the symmetric exclusion process on suitable random grids that approximate a compact Riemannian manifold.
We prove that a class of random walks on these random grids converge to Brownian motion on the manifold.
We then consider the empirical density field of the symmetric exclusion process and prove that it converges to the solution of the heat equation on the manifold.
\end{abstract}
\section{Introduction}

Hydrodynamic limits of interacting particle systems is a well established subject. A large variety of parabolic equations (such as the non-linear heat equation) and hyperbolic conservation laws have been obtained from microscopic stochastic particle systems; see \cite{kipnis1999scaling, demasi2006mathematical, seppalainen2008translation} for overviews.
Usually, the setting here is that in the underlying particle system the particles move on the lattice $\Z^d$, and after rescaling the limiting
partial differential equation is defined on $\R^d$, or on a subdomain of $\R^d$ such as an interval, where then equations with boundary conditions on
the ends of the interval are derived (e.g. Dirichlet boundary conditions for the case where at the right and left end the system is coupled to a reservoir fixing the density of particles, see \cite{goncalves}).

Motivated e.g.\ by the study of the motion of proteins in a cell-membrane, or more general motion of particles on curved interfaces,
it is clear that there are many relevant physical systems of which the macroscopic motion
takes place on a Riemannian manifold rather than on Euclidean space. It is the aim of this paper to provide first steps in this direction, by considering the simplest interacting particle system on a suitable discretization of a Riemannian manifold and proving its hydrodynamic limit.
The symmetric exclusion process is a well-known and well-studied interacting particle system 
for which in standard setting it is rather straightforward to obtain the hydrodynamic limit using duality.
Duality allows to translate the one-particle scaling limit, i.e., the fact that the rescaled single particle
position converges to Brownian motion to the fact that the hydrodynamic limit of the particle system is the
diffusion equation. Another manifestation of duality is the fact that the microscopic equation for the expectation of the density field
is already a closed equation. We consider the symmetric exclusion process on a suitable discretization (a notion defined more precisely below)
of a compact Riemannian manifold and prove that its empirical density field, after appropriate rescaling, converges to the solution of the heat equation
on the manifold.
To obtain this result, we start in section 2 by studying the invariance principle of a class of geodesic random walks, thereby extending earlier results of
\cite{jorgensen1975central}. These random walks are shown to converge to Brownian motion, via the technique of generator convergence. Next, in section 3, we define a notion of
``uniformly approximating grids'' and show that choosing uniformly $N$ points on the manifold, and connecting them via a kernel depending on the Riemannian distance yields a weighted graph such that the corresponding random walk converges (as the number of random points tends to infinity) to a geodesic random walk which in turn scales to Brownian motion. We also formulate abstract conditions on approximating grids ensuring the convergence of the weighted random walk to Brownian motion. In particular, convergence of the empirical distribution to the normalized Riemannian volume in Kantorovic distance is shown to be sufficient, i.e. we show that in that setting weights can be chosen such that the corresponding random walk converges to Brownian motion. We give several examples of such suitable grids.
Finally, in section 4, we define the exclusion process on such suitable grids (defined in section 3) and show that its empirical density converges to the solution of the heat equation, following the proof from \cite{seppalainen2008translation}.


\section{The invariance principle for a class of geodesic random walks}\label{part3}

Let $M$ be an $n$-dimensional, compact and connected Riemannian manifold. Then we know that $M$ is complete and hence geodesically complete. The main purpose of this section is to define the geodesic random walk and to show that it approximates Brownian motion when appropriately rescaled (in time and space). Such random walks and this so-called invariance principle have been studied before (\cite{jorgensen1975central} and in a special case \cite{blum1984note}). However we will directly obtain results that are tailor-made to apply them in section~\ref{part2}. In particular, we will obtain general assumptions on the jumping distributions of the geodesic random walk for it to converge to Brownian motion. 
In section~\ref{1generators}, we define the geodesic random walk and show convergence of the generators to the generator of Brownian motion under certain assumptions on the jumping distributions.
Section~\ref{3stepdist} is devoted to finding out which distributions satisfy these assumptions.
\\
\subsection{Convergence of the generators}\label{1generators}
\textbf{The process}\\
Let $\{\mu_p,p\in M\}$ be a collection of positive, finite measures where each $\mu_p$ is a measure on $T_pM$. The measure $\mu_p$ represents the rate to jump in a particular direction of $T_pM$. More precisely, the Markov process $X^N=\{X^N_t,t\geq 0\}$ associated to $\{\mu_p,p\in M\}$ has generator
\begin{equation*}
    L_Nf(p)=\int_{T_pM} f(p(1/N,\eta))-f(p) \mu_p(\dd \eta),
\end{equation*}
where for a vector $\xi\in T_pM$ we denote the geodesic through $p$ with tangent vector $\xi$ at $p$ by $p(\cdot,\xi)$.
We denote the corresponding semigroup by
\begin{equation*}
    S^N_tf(p)=\E_pf(X^N_t).
\end{equation*}
Both of these have the continuous functions on the manifold $C(M)$ as their domain.\\
\\
We interpret this process as follows.
When the process $X^N$ is at a point $p$, it chooses a random direction $\eta$ from $T_pM$ with rates given by $\mu_p$ (i.e. it waits for an exponential time with rate $\mu_p(T_pM)$ and then independently picks a vector according to the probability distribution $\frac{\mu_p}{\mu_p(T_pM)}$). Then the process jumps to the position $p(1/N,\eta)$ that is reached by following the geodesic through $p$ in the direction of $\eta$ for time $\frac{1}{N}$. This situation is sketched in figure~\ref{geodRW}. We assume that choosing random directions happens independently. In this section we will specify restrictions that the measures $\mu_p$ should satisfy. Later (in section~\ref{3stepdist}), we will show that we can take $\mu_p$ to be for instance the uniform distribution on the unit tangent vectors at $p$.\\
\\
\begin{figure}
\begin{tikzpicture}[scale=0.92]
\draw (4,4) circle [radius=4];
\draw (0,4) arc (180:360:4 and 0.7);

\draw [fill] (2,6) circle [radius=1.5pt];
\node [below] at (2,6) {$p_0$};
\draw [dashed] (6.15,6.7) arc (80:130:5 and 3);
\draw [->] (2,6) to (4,7);
\node at (3,6.7) {$\eta_0$};

\draw [fill] (6.15,6.7) circle [radius=1.5pt];
\node [above] at (6.15,6.7) {$p_1$};
\draw [dashed] (6.15,6.7) arc (80:65:6 and 40);
\draw [->] (6.15,6.7) to (7.2,5.24);
\node at (7.1,6) {$\eta_1$};

\draw [fill] (7.7,3.43) circle [radius=1.5pt];
\node [below] at (7.7,3.43) {$p_2$};
\draw [dashed] (7.65,3.5) arc (30:60:5 and 4);
\draw [->] (7.65,3.5) to (6.8,4.5);
\node at (7,3.9) {$\eta_2$};

\draw [fill] (5.75,5) circle [radius=1.5pt];
\node [above] at (5.75,5) {$p_3$};
\draw [dashed] (5.75,5) arc (80:70:4 and 40);
\draw [->] (5.75,5) to (6.3,4);
\node at (5.75,4.3) {$\eta_3$};

\draw [fill] (6.45,3.2) circle [radius=1.5pt];
\node [below] at (6.45,3.2) {$p_4$};

\node at (2.5,4.5) {$M=S^2$};

\end{tikzpicture}
\includegraphics[width=0.5\textwidth]{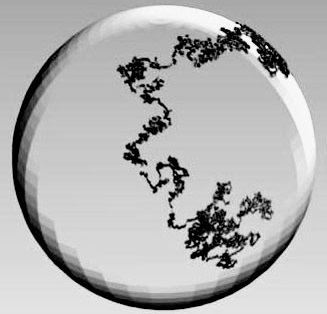}
\caption{Left: geodesic random walk on a sphere. Right: Brownian motion on a sphere (source: \nolinkurl{https://en.wikipedia.org/wiki/Brownian_motion}).}
\label{geodRW}
\end{figure}

\textbf{The $\R^n$ case}\\
Before we go into the general case, we illustrate the above in $\R^n$.
In $\R^n$ the exponential map is simply addition if we identify $T_p\R^n$ with $\R^n$ itself. So in that case from a point $p$ the process moves to $p(1/N,\eta)=p+\frac{1}{N}\eta$ where $\eta$ is chosen from $T_p\R^n=\R^n$ randomly. This means that the discrete time jumping process when jumping as described above, can be denoted by $S^N_m=\sum_{i=1}^m\frac{1}{N}\eta_i=\frac{1}{N}\sum_{i=1}^m\eta_i$ where $\eta_j$ is drawn from $T_{S_{j-1}}\R^n=\R^n$ according to some distribution. Now let $\{N_t,t\geq 0\}$ be a Poisson process with rate one and define $X^N_t=S_{N_t}$. Then $X$ makes the same jumps as $S$, but after independent exponential times. We see that $X^N=\{X^N_t,t\geq 0\}$ satisfies the description above. Now the invariance principle tells us that under some conditions on the jumping rates $X^N_{tN^2}\rightarrow B_t$ in distribution as $N$ goes to infinity, where $B$ is Brownian motion. We show the analogous result in the more general setting of a manifold.\\
\\
\textbf{Aim}\\
We denote the Laplace-Beltrami operator on the manifold by $\Delta_M$. The rest of this section will be devoted to the proof of the following result.
\begin{prop}\label{genconv}
Suppose that in the situation above we have:
\begin{itemize}
    \item $\sup_{p\in M} \sup_{\eta\in\supp \mu_p} ||\eta||<\infty$
    \item $\sup_{p\in M} \mu_p(T_pM)<\infty$
    \item $\int \eta^i \mu_p(\dd \eta)=0$ and $\int\eta^i\eta^j\mu_p(\dd \eta)=g^{ij}(p)$ in each coordinate system around $p$
\end{itemize}
Then for $f\in C^\infty$: $N^2L_Nf\rightarrow \frac{1}{2} \Delta_Mf$ uniformly on $M$.
\end{prop}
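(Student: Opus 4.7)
The plan is to fix a point $p \in M$, perform a second-order Taylor expansion of $f$ along the geodesic $t \mapsto p(t,\eta)$ at $t = 1/N$, integrate over $\eta$ using the zero-mean and covariance hypotheses to identify the leading term as $\tfrac{1}{2}\Delta_M f$, and finally use compactness of $M$ together with the two boundedness hypotheses to promote pointwise convergence to uniform convergence.

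\textbf{Key computations.} For fixed $p$ and $\eta \in \supp \mu_p$, set $h(t) := f(p(t,\eta))$. In a coordinate chart around $p$, write $p(t,\eta) = (x^1(t), \dots, x^n(t))$; the geodesic equation $\ddot x^k = -\Gamma^k_{ij}(x)\dot x^i \dot x^j$ together with $\dot x^i(0) = \eta^i$ yields
\begin{align*}
h'(0) &= \eta^i\,\partial_i f(p),\\
h''(0) &= \eta^i \eta^j \bigl(\partial_i \partial_j f(p) - \Gamma^k_{ij}(p)\,\partial_k f(p)\bigr).
\end{align*}
A Taylor expansion then gives $h(1/N) - h(0) = \tfrac{1}{N} h'(0) + \tfrac{1}{2N^2} h''(0) + R_N(p,\eta)$ with $|R_N(p,\eta)| \le \tfrac{1}{6N^3} \sup_{t \in [0,1/N]} |h'''(t)|$. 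Integrating against $\mu_p$, the first moment hypothesis kills the $1/N$-term and the second reduces the $1/N^2$-term to $\tfrac{1}{2N^2} g^{ij}(p)\bigl(\partial_i \partial_j f - \Gamma^k_{ij}\,\partial_k f\bigr)(p)$. After multiplication by $N^2$, the leading term is exactly $\tfrac{1}{2}\Delta_M f(p)$ by the coordinate expression of the Laplace--Beltrami operator.

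\textbf{Uniformity.} By compactness of $M$, cover it by finitely many coordinate charts on which the Christoffel symbols, their first derivatives, and $\partial^\alpha f$ for $|\alpha|\le 3$ are uniformly bounded. The first boundedness hypothesis caps $\|\eta\|$ uniformly over $p$ and over $\supp\mu_p$, so by a positive lower bound on the injectivity radius, for all sufficiently large $N$ every segment $p([0,1/N],\eta)$ remains inside a single chart. There $|h'''(t)|$ is bounded by a constant depending only on $\|\eta\|$, the chart data, and $f$. Combined with the uniform bound on $\mu_p(T_pM)$, this yields
\[
N^2 \Bigl| \int R_N(p,\eta)\,\mu_p(\dd\eta) \Bigr| \le \frac{C}{N}
\]
for a constant $C$ independent of $p$, finishing the proof.

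\textbf{Expected main obstacle.} The delicate step will be making the remainder estimate genuinely uniform. Explicitly, $h'''(t)$ involves third partial derivatives of $f$, the Christoffel symbols, and their first derivatives, all evaluated along a short geodesic starting at an arbitrary $p \in M$. The uniform bound rests on packaging the compactness of $M$ into finite chart data plus a uniform injectivity radius, and checking that the coordinate-dependent bound on $h'''(t)$ can be controlled by $\|\eta\|$ (via the first hypothesis) independently of $p$. Once that is in place the rest is a bookkeeping exercise, and the identification of the leading term with $\tfrac12\Delta_M f$ is automatic from the coordinate formula for the Laplace--Beltrami operator.
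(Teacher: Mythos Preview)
Your proposal is correct and matches the paper's proof essentially step for step: finite chart cover by compactness, third-order Taylor expansion of $t\mapsto f(p(t,\eta))$, use of the geodesic equation to compute $h''(0)$, the moment hypotheses to kill the first-order term and identify the second with $\tfrac12\Delta_Mf$, and a uniform bound on $h'''$ via compactness of the chart data together with the two boundedness assumptions. The only cosmetic difference is that the paper, rather than invoking the injectivity radius, argues directly that $d(p(t,\eta),p)\le t\|\eta\|\le K/N$ forces the geodesic segment to stay inside a fixed compact subset of the chart for $N$ large (independently of $p$); this is exactly what you need and is perhaps cleaner than an injectivity-radius argument, which controls the exponential map rather than membership in a prescribed chart.
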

The first assumption requires that the supports of the measures and their total masses are bounded uniformly over all points of the manifold. We will loosely say that the measures are uniformly compactly supported and uniformly finite.
Since $C^\infty(M)$ is a core for $\frac{1}{2}\Delta_M$ (\cite{strichartz1983analysis}), the Trotter-Kurtz theorem (see~\cite{kurtz1969extensions}) implies the following corollary.
\begin{cor}
In the situation of proposition~\ref{genconv} the geodesic random walk converges to Brownian motion in distribution in $D([0,\infty),M)$ (the space of cadlag maps $[0,\infty)\rightarrow M$).
\end{cor}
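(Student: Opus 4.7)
The plan is to deduce this corollary from Proposition~\ref{genconv} by a direct application of the Trotter-Kurtz theorem in its Ethier-Kurtz form, which converts generator convergence on a core into weak convergence of the associated Markov processes in the Skorokhod space $D([0,\infty),M)$.

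First I would introduce the time-rescaled process $Y^N_t := X^N_{tN^2}$, whose generator is $N^2 L_N$ and whose semigroup is $t\mapsto S^N_{tN^2}$. The candidate limit is the Feller process on $M$ generated by $\tfrac12\Delta_M$, i.e.\ Brownian motion on $M$; since $M$ is compact this is a well-defined Feller process on $C(M)$, and the cited result of \cite{strichartz1983analysis} provides $C^\infty(M)$ as a core. The Feller property of each $Y^N$ is easy to verify: jumps occur at the uniformly bounded rate $\mu_p(T_pM)$, and the one-step transition kernel $p\mapsto\mathrm{Law}(p(1/N,\eta))$ depends continuously on $p$ by smooth dependence of the geodesic flow on its initial data, together with the implicit continuity of $p\mapsto\mu_p$.

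Proposition~\ref{genconv} now gives $N^2 L_N f \to \tfrac12\Delta_M f$ uniformly on $M$ for every $f\in C^\infty(M)$, which is precisely the convergence-on-a-core hypothesis needed. Taking the approximants $f_N := f$, Trotter-Kurtz yields strong convergence of semigroups $S^N_{tN^2}\to e^{t\Delta_M/2}$ on $C(M)$, uniformly in $t$ on compact subsets of $[0,\infty)$; combining this with weak convergence of initial distributions (the natural setting, e.g.\ a common starting point $p_0\in M$) then gives weak convergence of $Y^N$ to Brownian motion on $M$ in $D([0,\infty),M)$ by the standard Ethier-Kurtz upgrade. The analytic work was already carried out in Proposition~\ref{genconv}, so the only mildly non-routine point here is the Feller property of $Y^N$; this is however immediate from the uniform bounds in the hypotheses of Proposition~\ref{genconv} and the smoothness of the exponential map, and consequently there is essentially no new obstacle to overcome in the corollary itself.
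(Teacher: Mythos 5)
Your proposal is correct and follows essentially the same route as the paper, which likewise deduces the corollary directly from Proposition~\ref{genconv} via the Trotter--Kurtz theorem, using that $C^\infty(M)$ is a core for $\tfrac12\Delta_M$. Your additional remarks on the time rescaling and the Feller property of the approximating processes merely make explicit details the paper leaves implicit.
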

Note that if we denote the random variable corresponding to $\mu_p$ by $\zeta_p$, the second requirement of proposition~\ref{genconv} is that (in any coordinate system) $\E \zeta_p^i = 0$ and $\Cov(\zeta_p^i,\zeta_p^j)=g^{ij}(p)$. This shows that the mean vector $m$ of $\zeta_p$ satisfies $m=0$ and the covariance matrix $\Sigma$ satisfies $\Sigma=(g^{ij})(p)$. In $\R^n$, this simplifies to $\E \zeta_p^i = 0$ and $\Cov(\zeta_p^i,\zeta_p^j)=\delta^i_j$. This is satisfied for instance when $\mu_p$ is the uniform distribution on the sphere with radius $\sqrt N$ in $\R^n$. Section~\ref{3stepdist} deals with the question which measures satisfy the restrictions above. Some examples will be given at the end of that section as well.
\begin{rmk} Although we study the jumping distributions later, something that can already be seen now, is that we do not require any relation between jumping measures at different points of the manifold (apart from the uniform bounds on the support and the total mass). This means that our result does not require the jumping measures to be identically distributed, so it really generalizes~\cite{jorgensen1975central}.
\end{rmk}\sk

\textbf{Choosing suitable charts}\\
Let $f$ be a fixed smooth function from now on. Since we want the convergence $N^2L_N f\rightarrow \frac{1}{2} \Delta_Mf$ to be uniform on $M$, we cannot just consider this problem pointwise. To deal with this, we will choose specific coordinate charts.\\
Let $\rho$ denote the original metric of the manifold and let $d$ denote the metric that is induced by the Riemannian metric. Recall that these metrics induce the same topology. This means that we do not cause confusion when we speak about open and closed sets, continuous maps and compactness without explicitly mentioning the metric. For each $p\in M$, let $(x_p,U_p)$ be a coordinate chart for $M$ around $p$. $U_p$ is open with respect to $\rho$ and hence with respect to $d$. This means that there is some $\epsilon_p>0$ such that $G_p:=\overline{B_d(p,\epsilon_p)}\subset U_p$. Now define $O_p=B_d(p,\epsilon/2)$. Since $M$ is compact, we can find $p_1,..,p_m$ such that $M\subset \cup_i O_{p_i}$. We have the following easy statement.
\begin{lemma}
Let $(g_k)_{k=1}^\infty$ and $g$ be functions $M\rightarrow \R$. If $g_k\rightarrow g$ uniformly on each $O_{p_i}$, then $g_k\rightarrow g$ uniformly on $M$.
\end{lemma}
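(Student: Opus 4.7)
The statement is the standard fact that a finite union of sets on which convergence is uniform gives uniform convergence on the union; the key input already established in the text is that the cover $\{O_{p_i}\}_{i=1}^m$ is \emph{finite} (obtained by compactness of $M$).

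The plan is to reduce uniform convergence on $M$ to uniform convergence on each piece of the finite cover, using a standard max-over-finitely-many-indices argument. Given $\varepsilon>0$, for each $i\in\{1,\dots,m\}$ the assumption gives an index $K_i\in\N$ such that
\begin{equation*}
    \sup_{p\in O_{p_i}} |g_k(p)-g(p)| < \varepsilon \quad \text{for all } k\geq K_i.
\end{equation*}
Set $K:=\max_{i=1,\dots,m} K_i$, which is finite precisely because $m<\infty$. For any $p\in M$, the covering property $M\subset\bigcup_i O_{p_i}$ yields some $i$ with $p\in O_{p_i}$, and hence for $k\geq K$,
\begin{equation*}
    |g_k(p)-g(p)| \leq \sup_{q\in O_{p_i}}|g_k(q)-g(q)| < \varepsilon.
\end{equation*}
Taking the supremum over $p\in M$ gives $\sup_{p\in M}|g_k(p)-g(p)|\leq \varepsilon$ for all $k\geq K$, which is the desired uniform convergence.

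There is no real obstacle here: the entire argument rests on the finiteness of $m$, which was arranged by invoking compactness of $M$ in the paragraph preceding the lemma. The lemma should be stated and proved precisely so that it can be applied cleanly to the function $g_k=N^2 L_N f$ (with $N=k$) and $g=\tfrac12\Delta_M f$ in the sequel: the real work will be to verify uniform convergence on each fixed chart neighborhood $O_{p_i}$, not on $M$ directly.
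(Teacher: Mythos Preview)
Your proof is correct and essentially identical to the paper's own argument: both fix $\varepsilon>0$, use uniform convergence on each $O_{p_i}$ to obtain an index $K_i$ (the paper calls it $N_i$), take the maximum over the finitely many $i$, and then use the covering property pointwise. The only difference is cosmetic notation and your added commentary about the role of compactness and the intended application.
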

\begin{proof}
Let $\epsilon>0$. For each $i$ there is an $N_i\in\mathbb{N}$ such that for all $k\geq N_i: \sup_{O_{p_i}}|g_k(q)-g(q)|<\epsilon$. Set $N=\max_{1\leq i\leq m}N_i$ and let $q\in M$. Then there is a $j$ such that $q\in O_{p_j}$. Now for all $k\geq N$, we see $k\geq N_j$, so $|g_k(q)-g(q)|\leq \sup_{O_{p_i}}|g_k(s)-g(s)|<\epsilon$. This shows that $\sup_M |g_k(q)-g(q)|\leq\epsilon$. Hence $g_k\rightarrow g$ uniformly on $M$.
\end{proof}
Now let $j\in\{1,..,m\}$ be fixed. Call $O:=O_{p_j}$, $\epsilon:=\epsilon_{p_j}$, $x:=x_{p_j}$, $G:=G_{p_j}$ and $U:=U_{p_j}$ (this situation is shown in figure~\ref{proof_charts}). Because of the lemma, it suffices to show that $N^2L_N f\rightarrow \frac{1}{2} \Delta_Mf$ uniformly on $O$.\\
\\
\textbf{Technical considerations}\\
To obtain good estimations later, we will need that $p(s,\eta)$ is still in our coordinate system $(x,U)$ and even in the set $G$ when $|s|\leq \frac{1}{N}$ for $N$ large enough. Since the convergence must be uniform, how large $N$ must be can not depend on the point $p$. The following lemma tells us how to choose such $N$.

\begin{lemma}\label{bndlem}
Call $K=\sup_{p\in M} \sup_{\eta\in\supp \mu_p} ||\eta||<\infty$ (by assumption). Choose $N_\epsilon\in\N$ such that $\frac{1}{N_\epsilon}<\frac{\epsilon}{2K}$. Then for all $p\in O$ and $N\geq N_\epsilon$ we see
\begin{equation*}
    \forall |s|\leq \frac{1}{N}: p(s,\eta)\in G.
\end{equation*}
\end{lemma}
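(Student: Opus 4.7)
The plan is to control the Riemannian distance from $p_j$ to the point $p(s,\eta)$ by combining two facts: that $p$ itself is close to $p_j$ (since $p \in O$), and that the geodesic segment from $p$ to $p(s,\eta)$ is short (since it has bounded speed and is followed for a short time). The only subtle ingredient is the relationship between the parameter $s$ of the geodesic and the induced metric $d$, which is where $\|\eta\|$ enters.

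First I would recall that for any $\eta \in T_pM$, the curve $s \mapsto p(s,\eta)$ is, by definition, the geodesic with initial velocity $\eta$, and geodesics have constant speed equal to the norm of their initial tangent vector. Hence the Riemannian length of the segment from $p = p(0,\eta)$ to $p(s,\eta)$ is exactly $|s|\cdot\|\eta\|$. Since the Riemannian distance $d$ is the infimum over lengths of piecewise smooth curves, this gives the basic estimate
\begin{equation*}
    d(p, p(s,\eta)) \leq |s|\cdot\|\eta\|.
\end{equation*}

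Next I would plug in the hypotheses. For $\eta \in \supp \mu_p$ we have $\|\eta\| \leq K$ by the definition of $K$, and for $|s| \leq 1/N \leq 1/N_\epsilon$ we have $|s| < \epsilon/(2K)$ by the choice of $N_\epsilon$. Combining,
\begin{equation*}
    d(p, p(s,\eta)) \leq |s|\cdot\|\eta\| < \frac{\epsilon}{2K}\cdot K = \frac{\epsilon}{2}.
\end{equation*}
Finally, since $p \in O = B_d(p_j,\epsilon/2)$, the triangle inequality yields
\begin{equation*}
    d(p_j, p(s,\eta)) \leq d(p_j,p) + d(p, p(s,\eta)) < \frac{\epsilon}{2} + \frac{\epsilon}{2} = \epsilon,
\end{equation*}
so $p(s,\eta) \in B_d(p_j,\epsilon) \subset \overline{B_d(p_j,\epsilon)} = G$, as desired.

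There is no real obstacle here; the statement is essentially a quantitative triangle inequality together with the constant-speed property of geodesics. The only thing to be a little careful about is making sure the geodesic is actually defined on the full interval $[-1/N,1/N]$, but this is immediate because $M$ is compact and hence geodesically complete (as noted at the start of the section), so $p(s,\eta)$ makes sense for all $s\in\R$.
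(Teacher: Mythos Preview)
Your proof is correct and follows essentially the same argument as the paper: bound $d(p,p(s,\eta))\leq |s|\,\|\eta\|<\epsilon/2$ using the constant-speed property of geodesics and the choice of $N_\epsilon$, then apply the triangle inequality with $d(p_j,p)<\epsilon/2$ to land in $B_d(p_j,\epsilon)\subset G$. Your additional remark about geodesic completeness ensuring the geodesic is defined for all $s$ is a nice touch that the paper leaves implicit here.
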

\begin{proof}
Let $N\geq N_\epsilon$ and let $p\in O$. The situation of the proof is visually represented in figure~\ref{proof_charts}. Fix $s\in(-\frac{1}{N},\frac{1}{N})$. Without loss of generality assume $s>0$. Note that the speed of the geodesic $p(\cdot,\eta)$ equals $||\eta||$, so at time $s$, it has traveled a distance $s||\eta||$ from $p$. This means that there is a path of length $s||\eta||$ from $p(s,\eta)$ to $p$, so $d(p(s,\eta),p)\leq s||\eta||\leq \frac{1}{N}K \leq \frac{1}{N_\epsilon}K<\epsilon/2$. Since $p\in O$, we know $d(p,p_j)<\epsilon/2$. Now the triangle inequality shows that $d(p_j,p(s,\eta))\leq d(p_j,p)+d(p,p(s,\eta))< \epsilon/2+\epsilon/2=\epsilon$. This implies that $p(s,\eta)\in B_d(p_j,\epsilon)\subset G$.
\end{proof}

Fix $N_\epsilon$ as in the lemma and take $N$ larger than $N_\epsilon$.\\
\\
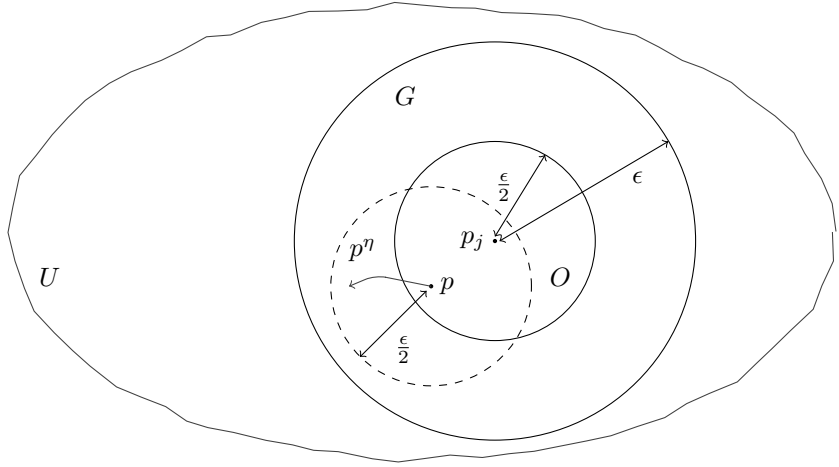
\begin{figure}[ht]
\begin{centering}
\begin{tikzpicture}[scale=1.2,pencildraw/.style={
    black!75,
    decorate,
    decoration={random steps,segment length=10pt,amplitude=1.5pt}
    }
]
\draw[pencildraw] (9.5,2.5) arc (0:-360:45mm and 25mm);
\node [right] at (0.7,2) {$U$};

\draw (5.8,2.4) circle [radius=2.2];
\node [left] at (5.8,2.4) {$p_j$};
\draw [fill] (5.8,2.4) circle [radius=0.5pt];
\node [right] at (4.6,4) {$G$};
\draw [<->] (5.85,2.4) -- (7.7,3.5);
\node [right] at (7.2,3.1) {$\epsilon$};

\draw (5.8,2.4) circle [radius=1.1];
\node [right] at (6.3,2) {$O$};
\draw [<->] (5.8,2.45) -- (6.35,3.35);
\node [right] at (5.7,3) {$\frac{\epsilon}{2}$};

\draw [dashed] (5.1,1.9) circle [radius=1.1];
\node [right] at (5.1,1.9) {$p$};
\draw [fill] (5.1,1.9) circle [radius=0.5pt];
\draw [<->] (5.05,1.85) -- (4.32,1.12);
\node [right] at (4.6,1.2) {$\frac{\epsilon}{2}$};

\node [right] at (4.1,2.3) {$p^\eta$};
\draw [pencildraw,rounded corners] [->] (5.1,1.9)  -- (4.6,2.0) -- (4.2,1.9);

\end{tikzpicture}
\caption{The chart $(x,U)$ with closed ball $G$ and open ball $O$ around $p_j$. As is shown in lemma~\ref{bndlem}, $p^\eta=p(t,\eta)$ does not leave the ball around $p$ with radius $\epsilon/2$, as long as $|t|\leq1/N$ for $N\geq N_\epsilon$. The importance for uniformity is that it does not matter where we choose $p$ (in $O$).\label{proof_charts}}
\end{centering}
\end{figure}

\textbf{Taylor expansion}\\
Now fix $p\in O$ and $\eta\in T_pM$. Write $p^\eta$ for the map $\R\rightarrow M$ that takes $t$ to $p(t,\eta)$. We can locally write $f\circ p^\eta = (f\circ x^{-1})\circ (x \circ p^\eta)$, which is a composition of smooth maps. This means that $f\circ p^\eta$ is just a smooth map $\R\rightarrow \R$, so we can use a Taylor expansion and obtain
\begin{equation*}
    f(p(1/N,\eta))=f(p)+\frac{1}{N} \frac{\dd (f\circ p^\eta)}{\dd t}(0)+\frac{1}{2N^2} \frac{\dd^2 (f\circ p^\eta)}{\dd^2 t}(0) + \frac{1}{6N^3} \frac{\dd^3 (f\circ p^\eta)}{\dd^3 t}(t_{N,\eta,p}),
\end{equation*}
where $t_{N,\eta,p}\in(0,1/N)$ is a number depending on $N$, $\eta$ and $p$. This gives us
\begin{eqnarray}
    N^2 L_Nf(p)&=& N^2 \int_{M_p} f(p(1/N,\eta))-f(p) \mu_p(\dd \eta)\nonumber \\
    &=& N^2 \int \frac{1}{N} \frac{\dd (f\circ p^\eta)}{\dd t}(0)+\frac{1}{2N^2} \frac{\dd^2 (f\circ p^\eta)}{\dd^2 t}(0) + \frac{1}{6N^3} \frac{\dd^3 (f\circ p^\eta)}{\dd^3 t}(t_{N,\eta,p}) \mu_p(\dd \eta)\nonumber \\
    &=& N \int \frac{\dd (f\circ p^\eta)}{\dd t}(0)\mu_p(\dd \eta)+\frac{1}{2}\int \frac{\dd^2 (f\circ p^\eta)}{\dd t^2}(0)\mu_p(\dd\eta) + \frac{1}{6N} \int  \frac{\dd^3 (f\circ p^\eta)}{\dd t^3}(t_{N,\eta,p}) \mu_p(\dd \eta).\hspace{0.7cm} \label{N2LN}
\end{eqnarray}
We will examine these terms separately. \\
\\
\textbf{The first term}\\
Recall that $p\in O$ and that $O$ is contained in a coordinate chart $(x,U)$. Since $N\geq N_\epsilon$, lemma~\ref{bndlem} guarantees us that $p(s,\eta)$ stays in the coordinate chart for $|s|<\frac{1}{N}$. Writing $\eta=\sum_{i=1}^n \eta^i \frac{\partial}{\partial x^i}|_p$, we see for $|s|<\frac{1}{N}$:
\begin{eqnarray*}
    \frac{\dd (f\circ p^\eta)}{\dd t}(s)&=&\frac{\dd}{\dd t} [(f\circ x^{-1}) \circ (x \circ p^\eta)](s)\\
    &=& \sum_{i=1}^n D_i(f\circ x^{-1})(x(p^\eta(s)) \frac{\dd (x^i \circ p^\eta)}{\dd t}(s)\\
    &=& \sum_{i=1}^n \frac{\partial f}{\partial x^i}( p^\eta(s)) \frac{\dd (x^i \circ p^\eta)}{\dd t}(s).
\end{eqnarray*}
Now setting $s=0$, this becomes:
\begin{eqnarray*}
    \sum_{i=1}^n \frac{\partial f}{\partial x^i}(p) \eta^i =  \sum_{i=1}^n \eta^i \frac{\partial}{\partial x^i}|_p f = \eta(f),
\end{eqnarray*}
since $p^\eta(0)=p(0,\eta)=p$ and the tangent vector to the geodesic $p(\cdot,\eta)$ at $0$ is $\eta$ (so the $i^{\text{th}}$ coordinate with respect $x$ is just $\eta^i$). Now the first term of~(\ref{N2LN}) becomes:
\begin{equation*}
    N\int \eta(f)\mu_p(\dd \eta) = N \int \sum_{i=1}^n \eta^i \frac{\partial}{\partial x^i}|_p f \mu_p(\dd \eta) = N  \sum_{i=1}^n \frac{\partial}{\partial x^i}|_p f \int \eta^i  \mu_p(\dd \eta).
\end{equation*}
By assumption these integrals are $0$. This shows that the first term of~(\ref{N2LN}) vanishes.\\
\\
\textbf{The second term}\\
Now we want to show that the remaining term equals $\frac{1}{2}\Delta_Mf(p)$. Similarly to above we see for $|s|<\frac{1}{N}$ (leaving out the arguments to keep things clear):
\begin{eqnarray*}
    \frac{\dd^2 (f\circ p^\eta)}{\dd t^2} &=& \frac{\dd}{\dd t} \sum_{i=1}^n \frac{\partial f}{\partial x^i} \frac{\dd (x^i \circ p^\eta)}{\dd t}\\
    &=& \sum_{i=1}^n \left\{ \left(\frac{\dd}{\dd t} \frac{\partial f}{\partial x^i}\right) \frac{\dd (x^i \circ p^\eta)}{\dd t} +
    \frac{\partial f}{\partial x^i} \left(\frac{\dd}{\dd t} \frac{\dd (x^i \circ p^\eta)}{\dd t}\right) \right\}\\
    &=& \sum_{i=1}^n \left\{ \sum_{j=1}^n \frac{\partial^2 f}{\partial x^j\partial x^i} \frac{\dd (x^j \circ p^\eta)}{\dd t} \frac{\dd (x^i \circ p^\eta)}{\dd t} +
    \frac{\partial f}{\partial x^i} \frac{\dd^2 (x^i \circ p^\eta)}{\dd t^2} \right\}.
\end{eqnarray*}
Since $p^\eta$ is a geodesic, we know that it satisfies the geodesic equations. This shows that for each $i=1,..,n$ we have
\begin{equation*}
    \frac{\dd^2 (x^i \circ p^\eta)}{\dd t^2} + \sum_{k,l=1}^n \Gamma^i_{kl} \frac{\dd (x^k\circ p^\eta)}{\dd t}\frac{\dd (x^l\circ p^\eta)}{\dd t}=0.
\end{equation*}
Using this yields the following expression for the second derivative:
\begin{equation*}
    \sum_{i=1}^n \left\{ \sum_{j=1}^n \frac{\partial^2 f}{\partial x^j\partial x^i} \frac{\dd (x^j \circ p^\eta)}{\dd t} \frac{\dd (x^i \circ p^\eta)}{\dd t} -
    \frac{\partial f}{\partial x^i} \sum_{k,l=1}^n \Gamma^i_{kl} \frac{\dd (x^k\circ p^\eta)}{\dd t}\frac{\dd (x^l\circ p^\eta)}{\dd t} \right\},
\end{equation*}
so
\begin{equation*}
    \frac{\dd^2 (f\circ p^\eta)}{\dd t^2}(0)=\sum_{i=1}^n \left\{ \sum_{j=1}^n \frac{\partial^2 f}{\partial x^j\partial x^i}(p) \eta^j\eta^i -
    \frac{\partial f}{\partial x^i}(p) \sum_{k,l=1}^n \Gamma^i_{kl}(p) \eta^k\eta^l \right\}.
\end{equation*}
Using linearity of the integral, we obtain the following expression for the second term of~(\ref{N2LN}):
\begin{equation*}
    \frac{1}{2}\sum_{i=1}^n \left\{ \sum_{j=1}^n \frac{\partial^2 f}{\partial x^i\partial x^j}(p) \int\eta^i\eta^j\mu_p(\dd \eta) -
    \frac{\partial f}{\partial x^i}(p) \sum_{k,l=1}^n \Gamma^i_{kl}(p) \int \eta^k\eta^l\mu_p(\dd \eta) \right\}.
\end{equation*}
Note that we also changed the order of the derivatives of $f$, this can be done since $f$ is smooth. Now we want the term above to equal
\begin{eqnarray*}
    &&\frac{1}{2}\Delta_Mf(p)=\frac{1}{2}\left\{g^{ij}\frac{\partial^2 f}{\partial x^i x^j} - g^{kl}\Gamma^i_{kl} \frac{\partial f}{\partial x^i}\right\}\\
    &=& \frac{1}{2}\sum_{i=1}^n \left\{ \sum_{j=1}^n \frac{\partial^2 f}{\partial x^i\partial x^j}(p) g^{ij}(p) -
    \frac{\partial f}{\partial x^i}(p) \sum_{k,l=1}^n \Gamma^i_{kl}(p) g^{kl}(p) \right\}.
\end{eqnarray*}
This is true, since we required that for any coordinate chart around $p$ and for all $i,j$: $\int_{M_p}\eta^i\eta^j\mu_p(\dd \eta)=g^{ij}(p)$.\\
\\
\textbf{The rest term}\\
If the last term goes to $0$ uniformly on $O$, we have the result. Let $N$ still be larger then $N_\epsilon$.
\begin{eqnarray*}
    \left|\frac{1}{6N} \int  \frac{\dd^3 (f\circ p^\eta)}{\dd t^3}(t_{N,\eta,p}) \mu_p(\dd \eta)\right|
    &\leq& \frac{1}{6N} \int \left| \frac{\dd^3 (f\circ p^\eta)}{\dd t^3}(t_{N,\eta,p})\right| \mu_p(\dd \eta) \\
    &\leq&  \frac{K'}{6N} \sup_{\eta\in\supp\mu_p} \left| \frac{\dd^3 (f\circ p^\eta)}{\dd t^3}(t_{N,\eta,p})\right|
\end{eqnarray*}
where $K'=\sup_{p\in M} \mu_p(T_pM)<\infty$ (by assumption). We know that $t_{N,\eta,p}\in [0,1/N]\subset [0,1/N_\epsilon]$. This means that the above is smaller than:
\begin{equation*}
    \frac{K'}{6N} \sup_{\eta\in\supp\mu_p}\sup_{t\in [0,1/N_\epsilon]} \left| \frac{\dd^3 (f\circ p^\eta)}{\dd t^3}(t)\right|
    \leq \frac{K'}{6N} \sup_{\eta:||\eta||\leq K}\sup_{t\in [0,1/N_\epsilon]} \left| \frac{\dd^3 (f\circ p^\eta)}{\dd t^3}(t)\right|.
\end{equation*}
Because of the $1/N$ in front of the equation, we only need to know that the rest is uniformly bounded to obtain uniform convergence. It thus suffices to show that $\frac{\dd^3 (f\circ p^\eta)}{\dd t^3}(t)$ is bounded as a function of $\eta$ with $||\eta||<K$ and $t\in[0,1/N_\epsilon]$. Lemma~\ref{bndlem} shows that $p(t,\eta)$ stays in $G$ for all such $\eta$ and $t$. We will use this fact multiple times.\\
\\
We first express $\frac{\dd^3 (f\circ p^\eta)}{\dd t^3}$ in local coordinates for $|t|\leq 1/N$.
\begin{equation}
    \frac{\dd^3 (f\circ p^\eta)}{\dd t^3} = \frac{\dd}{\dd t}\frac{\dd^2 (f\circ p^\eta)}{\dd t^2} = \frac{\dd}{\dd t}
    \sum_{i=1}^n \left\{ \sum_{j=1}^n \frac{\partial^2 f}{\partial x^j\partial x^i} \frac{\dd (x^j \circ p^\eta)}{\dd t} \frac{\dd (x^i \circ p^\eta)}{\dd t} +
    \frac{\partial f}{\partial x^i} \frac{\dd^2 (x^i \circ p^\eta)}{\dd t^2} \right\}\label{thirdder}.
\end{equation}
To make notation more compact, we introduce the following notation (and $f_i,f_{ijk}$ analogously):
\begin{equation*}
    f_{ij}:=\frac{\partial^2 f}{\partial x^j\partial x^i},\hspace{2cm} p^i_k:=\frac{\dd^k (x^i \circ p^\eta)}{\dd t^k}.
\end{equation*}
Combining this with Einstein summation, we can write~(\ref{thirdder}) as
\begin{eqnarray*}
    \frac{\dd}{\dd t} (f_{ij}p^i_1p^j_1 + f_ip^i_2) &=& (f_{ijk}p^k_1)p^i_1p^j_1+f_{ij}(p^i_1p^j_2+p^i_2p^j_1) + (f_{ij}p^j_1)p^i_2+f_ip^i_3\\
    &=& f_{ijk}p^k_1p^i_1p^j_1+f_{ij}(p^i_1p^j_2+2p^i_2p^j_1) +f_ip^i_3.
\end{eqnarray*}
Now, as before, we can deal with second derivatives of geodesics using the geodesic equations:
\begin{equation*}
    p^i_2=-\Gamma^i_{rs}p^r_1p^s_1.
\end{equation*}
We can also calculate the third derivative:
\begin{equation*}
    p^i_3=\frac{\dd}{\dd t}p^i_2=\frac{\dd}{\dd t}(-\Gamma^i_{rs}p^r_1p^s_1) = -\left(\frac{\dd}{\dd t}\Gamma^i_{rs}\right)p^r_1p^s_1-\Gamma^i_{rs}(p^r_1p^s_2+p^r_2p^s_1).
\end{equation*}
This shows us that $\frac{\dd^3 (f\circ p^\eta)}{\dd t^3}$ is a combination of products and sums of the following types of expressions: $f_i$, $f_{ij}$, $f_{ijk}$, $p^i_1$, $\Gamma^i_{rs}$ and $\frac{\dd}{\dd t}\Gamma^i_{rs}$. If we can bound all of these on the right domains (independent of $p$ and $\eta$), we are done.\\
\\
\textbf{Bounding $f_i$, $f_{ij}$ and $f_{ijk}$}\\
First of all, note that $f$ is a smooth function on $U$. Further, $\partial_i$ defines smooth vector field on $U$. Since $f_i=\frac{\partial f}{\partial x^i}$ is obtained by applying $\partial_i$ on $U$ to $f$, it is a smooth function on $U$. Continuing in this way, we see that $f_{ij}$ and $f_{ijk}$ are also smooth functions on $U$. In particular, they are smooth functions on $G$ (since it is a subset of $U$). $G$ is a closed subset of the compact $M$ and is hence compact itself. This implies that $f_i$, $f_{ij}$ and $f_{ijk}$ are (for each choice of $i,j,k$) bounded on $G$. Since we evaluate these functions in the points $p(s,\eta)$ for $0\leq s\leq 1/N$, $N\geq N_\epsilon$ and $||\mu||\leq K$, our discussion above shows that we only evaluate them in points of $G$. This means that we have found bounds for $f_i$, $f_{ij}$ and $f_{ijk}$.\\
\\
\textbf{Bounding $p^i_1$}\\
We start with a technical lemma.
\begin{lemma}\label{valofcomp}
Let $q\in M$ and let $(y,V)$ be a coordinate chart around $q$. Let $v\in T_qM$ and write $v=v^i\partial_i$. Then $|v^i|\leq \sqrt{g^{ii}(q)}||v||$.
\end{lemma}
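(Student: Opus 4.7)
The plan is to express each component $v^i$ as an inner product with a canonically chosen tangent vector and then apply Cauchy--Schwarz. The natural candidate is the Riemannian gradient of the coordinate function $y^i$, since by definition $v^i = \dd y^i(v) = \langle \grad y^i, v\rangle_q$, where the last equality is the defining property of the gradient.

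First I would compute $\grad y^i$ in local coordinates. Writing $\grad y^i = a^k \partial_k$, the identity $\langle \grad y^i, X\rangle = X(y^i)$ for every $X = X^j \partial_j$ gives $g_{kj}(q) a^k = \partial_j y^i = \delta^i_j$, hence $a^k = g^{ki}(q)$. In other words $\grad y^i = g^{ki}(q)\partial_k$ (summed over $k$).

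Next I would compute the norm. From the expression above,
\begin{equation*}
\|\grad y^i\|^2 = g_{kl}(q)\, g^{ki}(q)\, g^{li}(q) = \delta^i_l\, g^{li}(q) = g^{ii}(q),
\end{equation*}
with no summation over the repeated index $i$, using that $g^{kl}$ is the inverse of $g_{kl}$. Finally, applying Cauchy--Schwarz on $T_qM$ yields
\begin{equation*}
|v^i| = |\langle \grad y^i, v\rangle| \leq \|\grad y^i\|\cdot\|v\| = \sqrt{g^{ii}(q)}\,\|v\|,
\end{equation*}
which is the claimed bound.

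There is no real obstacle here; the only thing to be careful about is bookkeeping with the Einstein convention when computing $\|\grad y^i\|^2$, since the index $i$ is fixed (not summed) while $k$ and $l$ are summed, and one must invoke $g_{kl}g^{ki} = \delta^i_l$ correctly to collapse the double sum.
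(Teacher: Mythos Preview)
Your proof is correct and essentially identical to the paper's: both pair $v$ with the vector $g^{ij}(q)\partial_j$ (which you identify as $\grad y^i$), compute its squared norm as $g^{ii}(q)$, and apply Cauchy--Schwarz. The only difference is that you name this vector as the Riemannian gradient of the coordinate function, while the paper writes it down directly without that interpretation.
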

\begin{proof}
Fix some $1\leq i\leq n$. We see in the tangent space at $q$:
\begin{equation*}
    \left<v,g^{ij}\partial_j\right>=\left<v^k\partial_k,g^{ij}\partial_j\right>=v^kg^{ij}g_{kj}=v^k\delta^i_k=v^i.
\end{equation*}
Further,
\begin{equation*}
    ||g^{ij}\partial_j||^2=\left<g^{ij}\partial_j,g^{ik}\partial_k\right>=g^{ij}g^{ik}g_{jk}=g^{ij}\delta^i_j=g^{ii}.
\end{equation*}
Using the relations above and the Cauchy-Schwarz inequality, we obtain:
\begin{equation*}
    |v^i|=|\left<v,g^{ij}\partial_j\right>|\leq ||v||\cdot||g^{ij}\partial_j||=\sqrt{g^{ii}}||v||.
\end{equation*}
\end{proof}
Now we can use this to show the following.
\begin{lemma}
$|p^i_1(t)|=\left|\frac{\dd (x^i\circ p^\eta)}{\dd t}(t)\right| \leq \sqrt{g^{ii}(p(t,\eta))}||\eta||$.
\end{lemma}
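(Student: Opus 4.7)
The plan is to recognize $p^i_1(t)$ as the $i$-th coordinate, with respect to the chart $x$, of the velocity vector of the geodesic $p^\eta$ at time $t$, and then to reduce the claim to the previous lemma by invoking the constant-speed property of geodesics.

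More precisely, I would first note that since $p^\eta$ is a smooth curve into $M$, its tangent vector $\dot{p}^\eta(t) \in T_{p(t,\eta)}M$ acts on the coordinate function $x^i$ as $\dot{p}^\eta(t)(x^i) = \frac{\dd(x^i \circ p^\eta)}{\dd t}(t) = p^i_1(t)$. Equivalently, in the coordinate basis at $q := p(t,\eta)$ we have the expansion $\dot{p}^\eta(t) = p^i_1(t)\,\partial_i|_q$, so $p^i_1(t)$ is exactly the coefficient $v^i$ appearing in Lemma~\ref{valofcomp} when one takes $v = \dot{p}^\eta(t)$ and the chart $(y,V) = (x,U)$ around $q$. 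Lemma~\ref{bndlem} ensures that $q \in G \subset U$ for the range of $t$ and $\eta$ we care about, so the chart is indeed available.

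Next, I would use the fundamental fact that a geodesic has constant speed with respect to the Riemannian metric: $\|\dot{p}^\eta(t)\| = \|\dot{p}^\eta(0)\| = \|\eta\|$ for all $t$ for which the geodesic is defined. This is a standard consequence of the metric-compatibility of the Levi-Civita connection (or equivalently, differentiating $\langle \dot{p}^\eta, \dot{p}^\eta \rangle$ along the geodesic gives zero). Applying Lemma~\ref{valofcomp} now yields
\begin{equation*}
    |p^i_1(t)| = |v^i| \leq \sqrt{g^{ii}(q)}\,\|v\| = \sqrt{g^{ii}(p(t,\eta))}\,\|\eta\|,
\end{equation*}
which is precisely the desired inequality.

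There is no real obstacle here; the proof is essentially a bookkeeping step that repackages Lemma~\ref{valofcomp} for the specific vector $\dot{p}^\eta(t)$. The only point that requires a brief justification is the constant-speed property, which is a standard fact about geodesics in Riemannian geometry and may simply be cited.
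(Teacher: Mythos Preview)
Your proposal is correct and follows essentially the same approach as the paper: identify $p^i_1(t)$ as the $i$-th coordinate of the geodesic's velocity vector, apply Lemma~\ref{valofcomp}, and invoke the constant-speed property of geodesics to replace $\|\dot p^\eta(t)\|$ by $\|\eta\|$. Your explicit appeal to Lemma~\ref{bndlem} to ensure the chart is available is a nice touch that the paper leaves implicit at this point.
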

\begin{proof}
The first equation is just a change of notation. Further we see
\begin{equation*}
    \frac{\dd (x^i\circ p^\eta)}{\dd t}=\left(p^\eta_* \frac{\dd}{\dd t}\right)(x^i)=\frac{\dd p^\eta}{\dd t}(x^i)=\left(\frac{\dd p^\eta}{\dd t}\right)^i.
\end{equation*}
This means that $\frac{\dd (x^i\circ p^\eta)}{\dd t}$ is just the $i^{\text{th}}$ coordinate with respect to $(x,U)$ of the tangent vector to $p^\eta$ at time $t$ so at the point $p(t,\eta)\in M$. Using lemma~\ref{valofcomp}, we see
\begin{equation}\label{eqhoi}
    \left|\frac{\dd (x^i\circ p^\eta)}{\dd t}(t)\right| \leq \sqrt{g^{ii}(p(t,\eta))}\left|\left|\frac{\dd p^\eta}{\dd t}\right|\right|.
\end{equation}
Since $p^\eta$ is a geodesic, it has constant speed. Its speed at $p$ is $||\eta||$, so this must be its speed anywhere else along the trajectory. Hence $||\frac{\dd p^\eta}{\dd t}||=||\eta||$. Inserting this in~(\ref{eqhoi}) yields the result.
\end{proof}
We can now easily obtain a bound for $p^i_1$. For $0\leq t\leq 1/N$ and $||\eta||\leq K$, we know $p(t,\eta)$ stays in $G$. $g^{ii}$ is a smooth and hence continous function on $U$, so it is bounded on $G$ (since $G$ is compact). This means that $\sqrt{g^{ii}(p(t,\eta))}$ is bounded by some $K^{i}$ for $||\eta||\leq K$ and $0\leq t\leq 1/N$. Now we see $|p^i_1|\leq \sqrt{g^{ii}(p(t,\eta))}\left|\left|\frac{\dd p^\eta}{\dd t}\right|\right|\leq K^{i}K$.\\
\\
\textbf{Bounding $\Gamma^i_{rs}$ and $\frac{\dd}{\dd t}\Gamma^i_{rs}$}\\
Each $g_{ij}$ is a smooth function on $U$. This means that $\frac{\partial g_{ij}}{\partial x^k}$ is a smooth function on $U$. This implies that $\Gamma^i_{rs}$ is just combination of products and sums of smooth functions, so it is smooth itself. Now, as before, $\Gamma^i_{rs}$ is bounded on $G$. Since we only evaluate it in $p(t,\eta)$ with $0\leq t\leq 1/N$ and $||\eta||\leq K$, we only evaluate it in $G$, so we have bounded $\Gamma^i_{rs}$. \\
Now $\frac{\dd}{\dd t}\Gamma^i_{rs}$ can be written as
\begin{equation*}
    \frac{\dd}{\dd t}\Gamma^i_{rs}=\frac{\partial \Gamma^i_{rs}}{\partial x^j} \frac{\dd (x^j\circ p^\eta)}{\dd t} = (\Gamma^i_{rs})_j p^j_1,
\end{equation*}
with notation as above. Since $\Gamma^i_{rs}$ is smooth function $U\rightarrow \R$, this expression can be bounded in exactly the same way as expressions like $f_jp^j_1$ above. 
\subsection{Stepping distribution}\label{3stepdist}
\textbf{Constraints for a stepping distribution}\\
The question now is which distributions $\mu_p$ on $T_pM$ satisfy the assumptions of proposition~\ref{genconv}. From here on we fix $p\in M$ and simply write $\mu$ for $\mu_p$. Being compactly supported and finite are rather natural constraints, but the other assumptions are harder, especially since they involve local coordinates. In this section we address the question which distributions satisfy the other assumptions, i.e. for every coordinate system around $p$:
\begin{equation}\label{symreqorig}
    \begin{alignedat}{3}
        &\int \eta^i \mu(\dd \eta) &&= 0 \hspace{1cm} &&\forall i=1,..,n\\
        &\int \eta^i\eta^j \mu(\dd \eta) &&= g^{ij}   &&\forall i,j=1,..,n.
    \end{alignedat}
\end{equation}

To generalize this a bit, suppose $\mu$ satisfies the following for some $c>0$ for every coordinate system:
\begin{equation}\label{symreq}
    \begin{alignedat}{3}
        &\int \eta^i \mu(\dd \eta) &&= 0 \hspace{1cm} &&\forall i=1,..,n\\
        &\int \eta^i\eta^j \mu(\dd \eta) &&= cg^{ij}   &&\forall i,j=1,..,n.
    \end{alignedat}
\end{equation}
Following the proof in the previous section, one sees directly that in this case the generators converge to the generator of Brownian motion that is speeded up by a factor $c$. We will look into this generalized situation and at the end we will see how to determine $c$.\\
\\
\textbf{Independence of~(\ref{symreq}) of coordinate systems}\\
The following lemma shows that if~(\ref{symreq}) holds for a single coordinate system, it holds for any coordinate system.
\begin{lemma}\label{onesystemsuffices}
If~(\ref{symreq}) holds for some $c>0$ and for some coordinate system $(x,U)$ around $p$, then it holds for the same $c$ for all coordinate systems around $p$.
\end{lemma}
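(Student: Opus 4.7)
The plan is to exploit the fact that the conditions in (\ref{symreq}) are written in local coordinates but involve only the components of $\eta \in T_pM$ and the inverse metric $g^{ij}$ at the single point $p$, both of which transform linearly under a change of coordinates. So the proof reduces to a direct check using standard transformation laws and linearity of the integral.

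Concretely, suppose (\ref{symreq}) holds in a chart $(x,U)$ around $p$ and let $(y,V)$ be a second chart around $p$. For $\eta \in T_pM$, write the two coordinate expressions $\eta = \eta^j \frac{\partial}{\partial x^j}\big|_p = \tilde{\eta}^i \frac{\partial}{\partial y^i}\big|_p$, and set $A^i_j := \frac{\partial y^i}{\partial x^j}(p)$. The vector-component change-of-basis rule gives $\tilde{\eta}^i = A^i_j\, \eta^j$, and the key point is that $A^i_j$ is a \emph{constant} (it is the Jacobian evaluated at the fixed point $p$), so it can be pulled out of any integral over $T_pM$.

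First I would handle the first-moment condition: by linearity,
\begin{equation*}
    \int \tilde{\eta}^i\, \mu(\dd \eta) \;=\; A^i_j \int \eta^j\, \mu(\dd \eta) \;=\; 0,
\end{equation*}
using the hypothesis in the $x$-chart. For the second-moment condition I would compute
\begin{equation*}
    \int \tilde{\eta}^i \tilde{\eta}^j\, \mu(\dd \eta) \;=\; A^i_k A^j_l \int \eta^k \eta^l\, \mu(\dd \eta) \;=\; c\, A^i_k A^j_l\, g^{kl}(p),
\end{equation*}
and then invoke the transformation law for the components of the inverse metric tensor,
\begin{equation*}
    \tilde{g}^{ij}(p) \;=\; \frac{\partial y^i}{\partial x^k}(p)\, \frac{\partial y^j}{\partial x^l}(p)\, g^{kl}(p) \;=\; A^i_k A^j_l\, g^{kl}(p),
\end{equation*}
to conclude that $\int \tilde{\eta}^i \tilde{\eta}^j\, \mu(\dd\eta) = c\, \tilde{g}^{ij}(p)$, which is the desired identity in the $y$-chart with the same constant $c$.

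There is no real obstacle here; the only thing to be careful about is to note explicitly that the Jacobian is evaluated at $p$, hence constant in $\eta$, so that the linear transformation commutes with integration against $\mu$. The statement is then essentially a tautological consequence of the tensorial transformation rules of vector components and $(2,0)$-tensor components.
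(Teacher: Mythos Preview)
Your proof is correct and follows essentially the same approach as the paper: both compute the transformed first and second moments by pulling the constant Jacobian $\partial y^i/\partial x^j(p)$ out of the integral and then identify the result with the transformation law for the inverse metric. The only cosmetic difference is that the paper packages the last step as a matrix identity $J^{-1}G^{-1}(J^{-1})^T=\hat G^{-1}$, whereas you invoke the $(2,0)$-tensor transformation rule directly.
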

\begin{proof}
Let $(x,U)$ be a coordinate system around $p$ for which~(\ref{symreq}) holds with $c>0$ and let $(y,V)$ be any other coordinate system around $p$. It suffices to show that~(\ref{symreq}) holds with the same $c$ for $y$. Denote the metric matrix with respect to $x$ by $g$ and the one with respect to $y$ by $\hat g$. For any $\eta\in T_pM$ define $\eta^1,..,\eta^n$ as the coefficients of $\eta$ with respect to $x$, so such that $\eta=\sum_i \eta^i \frac{\partial}{\partial x^i}$. Analogously let $\hat\eta^1,..,\hat\eta^n$ be such that $\eta=\sum_i \hat\eta^i \frac{\partial}{\partial y^i}$. Let $J=\frac{\partial (x^1,..,x^n)}{\partial (y^1,..,y^n)}$. If $\eta\in T_pM$, then
\begin{equation*}
    \hat\eta^j=\eta(y^i)=\sum_i \eta^i \frac{\partial}{\partial x^i} y^i = \sum_i \eta^i \frac{\partial y^j}{\partial x^i}.
\end{equation*}
This shows that for any $j$
\begin{equation*}
    \int\hat\eta^j\mu(\dd \eta) = \int \sum_{i=1}^n \eta^i \frac{\partial y^j}{\partial x^i} \mu(\dd \eta) = \sum_{i=1}^n \frac{\partial y^j}{\partial x^i} \int \eta^i \mu(\dd \eta) = 0, 
\end{equation*}
since for any $i$: $\int \eta^i \mu(\dd \eta) = 0$. Moreover, for any $i,j$: $\int \eta^i\eta^j\mu(\dd \eta)=c g^{ij}$, so for any $i,j$:
\begin{eqnarray*}
    \int \hat\eta^i\hat\eta^j\mu(\dd \eta) &=& \int \sum_{k=1}^n \eta^k \frac{\partial y^i}{\partial x^k} \sum_{l=1}^n \eta^l \frac{\partial y^j}{\partial x^l} \mu(\dd \eta) = \sum_{k,l=1}^n \frac{\partial y^i}{\partial x^k}\frac{\partial y^j}{\partial x^l} \int \eta^k \eta^l \mu(\dd \eta)\\
    &=& \sum_{k,l=1}^n \frac{\partial y^i}{\partial x^k}\frac{\partial y^j}{\partial x^l} c g^{kl} = c (J^{-1}G^{-1}(J^{-1})^T)_{ij}.
\end{eqnarray*}
Since $J^{-1}G^{-1}(J^{-1})^T=J^{-1}G^{-1}(J^{T})^{-1}=(J^TGJ)^{-1}=\hat G ^{-1}$, we see that $\int \hat\eta^i\hat\eta^j\mu(\dd \eta)=c\hat g^{ij}$. We conclude that~(\ref{symreq}) holds for $y$ with the same $c$.
\end{proof}
\sk
\textbf{Orthogonal transformations and canonical measures}\\
We now introduce a class of measures.
\begin{defi}
Let $V$ be an inner product space and let $T$ be a linear map $V\rightarrow V$. We call $T$ an \textit{orthogonal transformation} if for any $u,v\in V$: $\left<Tu,Tv\right>=\left<u,v\right>$.\\
We call a measure $\mu$ on $T_pM$ \textit{canonical} if for any orthogonal transformation $T$ on $T_pM$ and for any coordinate system:
\begin{equation*}
    \int \eta^i \mu(\dd \eta) = \int (T\eta)^i \mu(\dd \eta) \text{ and } \int \eta^i \eta^j \mu(\dd \eta) = \int (T\eta)^i(T\eta)^j \mu(\dd \eta).
\end{equation*}
\end{defi}
\begin{rmk}\label{zeromean}In the same way as above, one can show that $\mu$ has the property above with respect to some coordinate system if and only if it has the property with respect to every coordinate system. Moreover, since $-I$ always satisfies $(-I)^TG(-I)=G$, we see that $\int \eta^i \mu(\dd \eta)=\int (-\eta)^i \mu(\dd \eta)=\int -\eta^i \mu(\dd \eta)=-\int \eta^i \mu(\dd \eta)$, so $\int \eta^i \mu(\dd \eta)$ is $0$ for any canonical $\mu$.
\end{rmk}
In words, $\mu$ is canonical if orthogonal transformations do not change the mean vector and the covariance matrix of a random variable that has distribution $\mu$. Remark~\ref{zeromean} shows that in fact the mean vector must be $0$. Note that in particular measures that are invariant under orthogonal transformations are canonical, since then $\int (T\eta)^i \mu(\dd \eta) = \int \eta^i (\mu\circ T^{-1})(\dd\eta) = \int \eta^i\mu(\dd \eta)$ and the other equation follows analogously. However a simple example shows that the converse is not true.
Let $M=\R$ and let $\mu$ be any non-symmetric distribution on $T_pM=\R$ with mean $0$. The only orthogonal transformation (apart from the identity) is $t\mapsto -t$. Under this transformation the mean (which is $0$) and the second moment are obviously left invariant, but $\mu$ is not symmetric, so it is not invariant. We will give an example for $\R^n$ later.\\
\\
If $(x,U)$ is some coordinate system around $p$ and $G=(g_{ij})$ is the matrix of the metric in $p$ with respect to $x$, we can write a linear transformation $T:T_pM\rightarrow T_pM$ as a matrix (which we will also call $T$) with respect to the base $\frac{\partial}{\partial x^1},..,\frac{\partial}{\partial x^n}$. We see that
\begin{eqnarray*}
    \left<T\eta,T\xi\right>=\sum_{i,j} g_{ij} (T\eta)^i(T\xi)^j = \sum_{i,j} g_{ij} \sum_{k} T_{ik} \eta^k \sum_{l} T_{jl} \xi^l=\sum_{k,l} \left(\sum_{i,j} g_{ij} T_{ik}T_{jl}\right) \eta^k \xi^l.
\end{eqnarray*}
If $T$ is orthogonal, this must equal
\begin{eqnarray*}
    \left<\eta,v\right>=\sum_{k,l} g_{kl} \eta^k\xi^l,
\end{eqnarray*}
so we see that $g_{kl}=\sum_{i,j} g_{ij} T_{ik}T_{jl}=(T^TGT)_{kl}$ and hence $G=T^TGT$.\\
Now for a measure $\mu$ on $T_pM$ and a coordinate system $(x,U)$, define the vector $A_\mu$ and the matrix $B_\mu$ by $A_\mu^i=\int \eta^i\mu(\dd\eta)$ and $B_\mu^{ij}=\int\eta^i\eta^j\mu(\dd \eta)$. Then we have the following.
\begin{lemma}\label{canonmatrix}
Let $\mu$ be a measure on $T_pM$. Then the following are equivalent.
\begin{enumerate}[(i)]
    \item $\mu$ is canonical.
    \item For every linear transformation $T$ and every coordinate system $(x,U)$: if $G=T^TGT$ , then $A_\mu=TA_\mu$ and $B_{\mu}=TB_\mu T^T$.
\end{enumerate}
\end{lemma}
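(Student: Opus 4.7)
The plan is to notice that the lemma is essentially a change of language: canonicality is the invariance of the first two moments under orthogonal $T$, while (ii) is the same invariance written in matrix form, using the characterisation of orthogonality recalled just before the lemma statement.

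First, I fix a coordinate system $(x,U)$ around $p$ with metric matrix $G=(g_{ij})$ and identify every linear transformation $T:T_pM\to T_pM$ with its matrix in the basis $\partial/\partial x^1,\dots,\partial/\partial x^n$. The computation displayed immediately above the lemma already establishes that $T$ is orthogonal with respect to $\langle\cdot,\cdot\rangle_p$ if and only if $G=T^TGT$. Since orthogonality is a basis-free notion, this matrix criterion holds in every coordinate system simultaneously, which takes care of the ``for every coordinate system'' clauses on both sides of the equivalence.

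Next I compute the two transformed moments in coordinates. Using $(T\eta)^i=\sum_k T_{ik}\eta^k$ and linearity of the integral,
\begin{equation*}
    \int (T\eta)^i\,\mu(\dd\eta)=\sum_k T_{ik}\int\eta^k\,\mu(\dd\eta)=(TA_\mu)^i,
\end{equation*}
and similarly
\begin{equation*}
    \int (T\eta)^i(T\eta)^j\,\mu(\dd\eta)=\sum_{k,l}T_{ik}T_{jl}\int\eta^k\eta^l\,\mu(\dd\eta)=(TB_\mu T^T)_{ij}.
\end{equation*}
Hence the two defining identities for canonicality read exactly $A_\mu=TA_\mu$ and $B_\mu=TB_\mu T^T$ (in the chosen coordinate system).

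Combining these two observations closes the loop. For $(i)\Rightarrow(ii)$: if $G=T^TGT$ then $T$ is orthogonal, so $\mu$ canonical gives the equalities above, i.e.\ $A_\mu=TA_\mu$ and $B_\mu=TB_\mu T^T$. For $(ii)\Rightarrow(i)$: if $T$ is orthogonal, then $G=T^TGT$ in every coordinate system, so (ii) gives the two matrix identities, and reading the calculation backwards recovers the two moment equalities defining canonicality. There is really no obstacle here; the only point that requires a moment's care is confirming that the orthogonality condition $G=T^TGT$ is chart-independent, which is immediate since orthogonality is defined invariantly on $T_pM$ and the preceding paragraph of the paper shows that this intrinsic condition is equivalent to $G=T^TGT$ in any chosen chart.
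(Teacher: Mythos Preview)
Your proof is correct and follows essentially the same route as the paper: both observe that (ii) is just the definition of canonicality rewritten in local coordinates, using the previously established equivalence between orthogonality of $T$ and $G=T^TGT$, together with the identities $\int (T\eta)^i\,\mu(\dd\eta)=(TA_\mu)^i$ and $\int (T\eta)^i(T\eta)^j\,\mu(\dd\eta)=(TB_\mu T^T)_{ij}$. Your version adds a brief remark on chart-independence, but the argument is otherwise identical.
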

\begin{proof}
$(i)\Leftrightarrow (ii)$ because $(ii)$ is just the definition of being canonical written in local coordinates. Indeed, we already saw that orthogonality or $T$ translates in local coordinates to $G=T^TGT$, the other expressions follow in a similar way from the following equations:
\begin{eqnarray*}
    A_\mu^{i}=\int (T\eta)^i \mu(\dd \eta) &=& \int \sum_k T_{ik} \eta^k\mu(\dd \eta) = \sum_k T_{ik} \int \eta^k\mu(\dd \eta) = \sum_k T_{ik}A_\mu^k\\
    B_\mu^{ij}=\int (T\eta)^i(T\eta)^j\mu(\dd \eta)&=&\int \sum_k T_{ik} \eta^k \sum_l T_{jl} \eta^l \mu(\dd \eta) = \sum_{k,l} T_{ik} T_{jl} \int \eta^k\eta^l\mu(\dd \eta) = \sum_{k,l} T_{ik} T_{jl} B_\mu^{kl}.
\end{eqnarray*}
\end{proof}
\sk

\textbf{Canonical measures are stepping distributions}\\
Now we have the following result.
\begin{prop}
Let $\mu$ be a probability measure on $T_pM$. Then $\mu$ is canonical if and only if it satisfies (\ref{symreq}) for some $c>0$.
\end{prop}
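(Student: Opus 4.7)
By Lemma~\ref{onesystemsuffices} it suffices to verify (\ref{symreq}) in a single coordinate system, and by Lemma~\ref{canonmatrix} being canonical is equivalent to $A_\mu=TA_\mu$ and $B_\mu=TB_\mu T^T$ for every orthogonal $T$. So the whole question reduces to linear algebra in $T_pM$. The convenient choice is a chart $(x,U)$ around $p$ whose coordinate frame $\partial/\partial x^i|_p$ is an orthonormal basis of $T_pM$ (start from any chart and postcompose with the linear map $G(p)^{-1/2}$); in such a chart $g_{ij}(p)=\delta_{ij}$, $g^{ij}(p)=\delta_{ij}$, and orthogonal transformations of $T_pM$ are exactly the ordinary orthogonal matrices $T\in O(n)$.

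For the forward direction, assume $\mu$ is canonical. Remark~\ref{zeromean} immediately gives $A_\mu=0$, which is the first line of (\ref{symreq}). For the second line I need to show that $B_\mu=cI$ in the orthonormal chart. The relation $B_\mu=TB_\mu T^T$ for every $T\in O(n)$ is equivalent, after replacing $T$ by $T^T$, to $T^TB_\mu T=B_\mu$, i.e.\ the quadratic form $v\mapsto v^TB_\mu v$ is $O(n)$-invariant. Since $O(n)$ acts transitively on the unit sphere, this quadratic form must be constant on the sphere, which forces $B_\mu=cI$ with $c=\int(\eta^1)^2\mu(\dd\eta)\geq 0$ (and $c>0$ unless $\mu=\delta_0$, a degenerate case tacitly excluded). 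Because $g^{ij}(p)=\delta_{ij}$, this is precisely $B_\mu^{ij}=cg^{ij}$, as required.

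For the converse, assume (\ref{symreq}) holds; in the orthonormal chart this reads $A_\mu=0$ and $B_\mu=cI$. Then for every orthogonal $T$,
\[
TA_\mu=0=A_\mu, \qquad TB_\mu T^T = cTT^T = cI = B_\mu,
\]
so by Lemma~\ref{canonmatrix} $\mu$ is canonical. The only non-formal step is the matrix observation used in the forward direction, namely that a symmetric operator invariant under conjugation by the full orthogonal group is a scalar multiple of the identity; the sphere-transitivity argument above is the quickest route, and one could equivalently invoke Schur's lemma for the irreducible standard representation of $O(n)$ on $\R^n$.
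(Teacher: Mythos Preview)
Your proof is correct and follows essentially the same route as the paper: work in a chart with $g_{ij}(p)=\delta_{ij}$ (the paper uses normal coordinates), invoke Remark~\ref{zeromean} for $A_\mu=0$, deduce $B_\mu=cI$ from $O(n)$-invariance, and handle the converse directly via Lemma~\ref{canonmatrix}. The only cosmetic difference is that the paper obtains $B_\mu=cI$ by first orthogonally diagonalizing $B_\mu$ and then using coordinate permutations to equate the diagonal entries, whereas you use transitivity of $O(n)$ on the sphere (or Schur); both are standard and equally short.
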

\begin{proof}
First assume that $\mu$ is canonical and let $(x,U)$ be normal coordinates centered at $p$. Because of lemma~\ref{onesystemsuffices} it suffices to verify~(\ref{symreq}) for $x$, so we need to show that $A_\mu=0$ and $B_\mu=cG^{-1}=cI$ for some $c>0$. \\
The fact that $A_\mu=0$ is just remark~\ref{zeromean}. Now note that since $B_\mu$ is symmetric, it can be diagonalized as $TB_\mu T^{-1}$ where $T$ is an orthogonal matrix (in the usual sense). This means that $T^T=T^{-1}$ and that $T^TGT=T^TIT=T^TT=I=G$, so lemma~\ref{canonmatrix} tells us that the diagonalization equals $TB_\mu T^T=B_\mu$. This implies that $B_\mu$ is a diagonal matrix. Now for $i\neq j$ let $\bar I^{ij}$ be the $n\times n$-identity matrix with the $i^\text{th}$ and $j^\text{th}$ column exchanged. It is easy to see that $(\bar I^{ij})^T\bar I^{ij}=I$, so we must also have $B_\mu=\bar I^{ij}B_\mu(\bar I^{ij})^T$. The latter is $B_\mu$ with the $i^\text{th}$ and $j^\text{th}$ diagonal element exchanged. This shows that these elements must be equal. Hence all diagonal elements are equal and $B_\mu=cI$ for some $c\in\R$. Since $c=B_\mu^{11}=\int \eta^1\eta^1\mu(\dd\eta)\geq 0$, we know that $c\geq 0$. If $c=0$, then $B_\mu=0$, so $\mu=0$, which is not possible. We conclude that $c>0$.\\
Conversely let $(x,U)$ be a coordinate system with corresponding metric matrix $G$ and assume that $\mu$ satisfies~(\ref{symreq}) for some $c>0$. Let $T$ be such that $G=T^TGT$. Then $A_\mu=0=T0=TA_\mu$. We also see: $T^TGT=G \iff G=(T^T)^{-1}GT^{-1} \iff G^{-1}=TG^{-1}T^T \iff cG^{-1}=T(cG^{-1})T^T \implies B_\mu=TB_\mu T^T$ (since $B_\mu=cG^{-1}$), so by lemma~\ref{canonmatrix} $\mu$ is canonical.
\end{proof}

Now we know that if the stepping distribution is canonical (and finite and compactly supported, uniformly on $M$), the generators converge to the generator of Brownian motion that is speeded up by some factor $c>0$ (depending on $\mu$). The question remains what this $c$ is. The following lemma answers this question.
\begin{lemma}
Suppose $\mu$ satisfies (\ref{symreq}) for some $c>0$. Then $c=\frac{\int ||\eta||^2\mu(\dd \eta)}{n}$.
\end{lemma}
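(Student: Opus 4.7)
The plan is to compute $\int \|\eta\|^2 \mu(\dd\eta)$ in a fixed coordinate system around $p$ and use the assumed second-moment condition directly. The key identity is that the squared norm of a tangent vector is expressed via the metric with lower indices, while the hypothesis (\ref{symreq}) gives the second moments via the metric with upper indices; these two pair up to give the dimension $n$.

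Concretely, fix a coordinate chart $(x,U)$ around $p$ and let $G = (g_{ij})$ denote the matrix of the Riemannian metric at $p$ in this chart, with inverse $G^{-1} = (g^{ij})$. For $\eta \in T_pM$ with coordinates $\eta = \eta^i \partial_i$, we have
\begin{equation*}
\|\eta\|^2 = \langle \eta, \eta \rangle = g_{ij}\, \eta^i \eta^j,
\end{equation*}
using Einstein summation. Integrating against $\mu$ and pulling the (constant in $\eta$) coefficients $g_{ij}$ outside the integral gives
\begin{equation*}
\int \|\eta\|^2\, \mu(\dd\eta) = g_{ij} \int \eta^i \eta^j\, \mu(\dd\eta).
\end{equation*}

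By assumption (\ref{symreq}), the right-hand integral equals $c\, g^{ij}$, so
\begin{equation*}
\int \|\eta\|^2\, \mu(\dd\eta) = c\, g_{ij} g^{ij} = c\, \delta^i_i = c\, n,
\end{equation*}
where we used that $g_{ij}g^{ij}$ is the trace of $G G^{-1} = I_n$. Solving for $c$ yields the desired formula $c = \frac{1}{n}\int \|\eta\|^2 \mu(\dd\eta)$.

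This is essentially a one-line computation once one unwinds the notation, so there is no real obstacle; the only thing to note is that although the expression $g_{ij}\eta^i\eta^j$ is written in a particular chart, $\|\eta\|^2$ is coordinate-independent, so the resulting value of $c$ is (as it must be) intrinsic and matches the chart-independent statement of (\ref{symreq}) guaranteed by Lemma~\ref{onesystemsuffices}.
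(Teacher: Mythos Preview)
Your proof is correct and is essentially identical to the paper's own argument: both fix a coordinate chart, write $\|\eta\|^2 = g_{ij}\eta^i\eta^j$, plug in the assumption $\int \eta^i\eta^j\,\mu(\dd\eta)=cg^{ij}$, and use $g_{ij}g^{ij}=\tr(GG^{-1})=n$ to conclude.
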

\begin{proof}
We calculate the following (with respect to some coordinate system $(x,U)$):
\begin{eqnarray*}
    \int ||\eta||^2 \mu(\dd \eta) &=& \int \left<\eta,\eta\right>\mu(\dd \eta) = \int \left<\sum_i \eta^i\frac{\partial}{\partial x^i},\sum_j \eta^j\frac{\partial}{\partial x^j}\right>\mu(\dd \eta) \\
    &=& \sum_{i,j}\left<\frac{\partial}{\partial x^i},\frac{\partial}{\partial x^j}\right>\int \eta^i\eta^j\mu(\dd \eta)
    = \sum_{i,j} g_{ij}c g^{ij} = c \sum_i \sum_j g_{ij}g^{ji} = c\sum_i 1 = cn.
\end{eqnarray*}
Hence $c=\frac{\int ||\eta||^2\mu(\dd \eta)}{n}$.
\end{proof}
The nice part of this lemma is that the expression for $c$ does not involve a coordinate system, only the norm (and hence inner product) of $T_pM$. In particular we see that $c=1$ is equivalent to $\int ||\eta||^2\mu(\dd \eta)=n$.
We summarize our findings in the following result.
\begin{prop}\label{sumprop}
A probability measure $\mu$ on $T_pM$ satisfies (\ref{symreq}) for some $c>0$ if and only if it is canonical and $c=\frac{\int ||\eta||^2\mu(\dd \eta)}{n}$. In particular, it satisfies~(\ref{symreqorig}) 
if and only if it is canonical and $\int ||\eta||^2\mu(\dd \eta)=n$.
\end{prop}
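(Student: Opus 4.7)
The plan is to assemble the two immediately preceding results into the claimed equivalence, since this proposition is essentially their consolidation. The preceding proposition already characterises ``$\mu$ satisfies~(\ref{symreq}) for some $c>0$'' as equivalent to ``$\mu$ is canonical,'' and the preceding lemma computes that whenever~(\ref{symreq}) holds, the constant must be $c = \int ||\eta||^2\mu(\dd \eta)/n$. My strategy is therefore to chain these two facts together and then specialise.

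For the forward implication, if $\mu$ satisfies~(\ref{symreq}) for some $c>0$, the proposition yields that $\mu$ is canonical, and the lemma fixes the value of $c$ as the claimed integral expression. For the reverse implication, I assume $\mu$ is canonical and that $c$ is prescribed as $\int ||\eta||^2\mu(\dd \eta)/n$; the proposition then produces \emph{some} $c'>0$ for which~(\ref{symreq}) holds, and the lemma forces $c' = \int ||\eta||^2\mu(\dd \eta)/n$, which equals the prescribed $c$. Hence~(\ref{symreq}) holds with precisely the given constant.

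The ``in particular'' clause follows by taking $c=1$: the original system~(\ref{symreqorig}) is exactly~(\ref{symreq}) with $c=1$, so under the equivalence just established it is equivalent to $\mu$ being canonical together with $\int ||\eta||^2\mu(\dd \eta)=n$.

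No step here is genuinely an obstacle, since both component results are already in hand; the only point worth pausing over is that the two directions of the equivalence must produce the \emph{same} constant $c$, and this matching is automatic because the lemma expresses $c$ intrinsically from $\mu$ alone, without reference to which direction of the proof is being invoked.
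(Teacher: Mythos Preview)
Your proposal is correct and matches the paper's approach exactly: the paper presents this proposition explicitly as a summary (``We summarize our findings in the following result'') of the immediately preceding proposition and lemma, without giving a separate proof. Your chaining of the two results and the specialisation to $c=1$ is precisely the intended reading.
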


\begin{rmk}\label{notiid}
Note that all we need of the jumping distributions is that their mean is 0, their covariance matrix is invariant under orthogonal transformations, they are (uniformly) compactly supported and they are (uniformly) finite. We don't need the measures to be similar in any other way, so we do not at all require the jumps to have identical distributions in the sense of~\cite{jorgensen1975central}.
\end{rmk}\sk

\textbf{Examples}\\
1. To satisfy~(\ref{symreqorig}) for every coordinate system, by lemma~\ref{onesystemsuffices} it suffices to choose a coordinate system and construct a distribution that satisfies~(\ref{symreqorig}) for that coordinate system. Let $(x,U)$ be any coordinate system around some point in $M$ with corresponding metric matrix $G$ in that point. Let $X$ be any random variable in $\R^n$ that has mean vector $0$ and covariance matrix $G^{-1}$ (for instance let $X\sim N(0,G^{-1})$). Now let $\mu$ be the distribution of $\sum_i X^i\frac{\partial}{\partial x^i}$. Then by construction $\int \eta^i\mu(\dd \eta)=\E X^i = 0$ and $\int \eta^i\eta^j\mu(\dd \eta)=\E X^iX^j = \E X^iX^j -\E X^i\E X^j=g^{ij}$.\\ 
2. In the previous example~(\ref{symreqorig}) is immediate. Let us now consider an example that illustrates the use of proposition~\ref{sumprop}. Let $\mu_p$ be the uniform distribution on $\sqrt{n}S_pM$ (the vectors with norm $\sqrt n$). By definition of such a distribution, it is invariant under orthogonal transformations (rotations and reflections), so it is a canonical distribution. Since also $\int ||\eta||^2\mu(\dd \eta) = \int \sqrt{n}^2 \mu(\dd \eta)=n$, we conclude that the uniform distribution on $\sqrt{n}S_pM$ satisfies~(\ref{symreqorig}). Moreover, $\sup_{p\in M} \sup_{\eta\in\supp \mu_p} ||\eta||=\sqrt(n)<\infty$ and $\sup_{p\in M} \mu_p(T_pM)=1<\infty$. Together this shows that the $\mu_p$'s satisfy the assumption of proposition~\ref{genconv}.\\
3. Let us conclude by showing for $\R^n$ that the class of canonical distributions is strictly larger than the class of distributions that are invariant under orthogonal transformations, even with the restriction that $\int ||\eta||^2\mu(\dd \eta)=n$. It suffices to find a distribution $\mu$ with mean $0$ and covariance matrix $I$ (since then $\mu$ satisfies~(\ref{symreqorig}) and~\ref{sumprop} then tells us that $\mu$ is canonical and has $\int ||\eta||^2\mu(\dd \eta)=n$) and an orthogonal $T$ such that $\mu\neq \mu\circ T^{-1}$. Let $\nu$ be the distribution on $\R$ given by $\nu=\frac{1}{5}\delta_{-2}+\frac{4}{5}\delta_{1/2}$. Then, using the natural coordinate system, $\int t \nu(\dd t)=\frac{1}{5}(-2)+\frac{4}{5}\frac{1}{2}=0$ and $\int t^2\mu(\dd t) = \frac{1}{5}(-2)^2+\frac{4}{5}(\frac{1}{2})^2=1$. Now let $\mu=\nu\times..\times\nu$ ($n$ times). Then we directly see that the mean vector is $0$ and the covariance matrix is $I$. However $T=-I$ is an orthogonal transformation and $\mu\circ (-I)^{-1}$ equals the product of $n$ times $\frac{1}{5}\delta_{2}+\frac{4}{5}\delta_{-1/2}$, so obviously $\mu\neq \mu\circ (-I)^{-1}$.
\section{Uniformly approximating grids}\label{part2}
We would like to consider interacting particle systems such as the symmetric exclusion process on a manifold. Because the exclusion process does not make sense directly in a continuum, we need a proper discrete grid approximation. More precisely, we need a sequence of grids on the manifold that converges to the manifold in a suitable way. It will become clear that the grids will need to approximate the manifold in a uniform way. We will see in section~\ref{part1} that a natural requirement on the grids is that we can define edge weights (or, equivalently, random walks) on them, such that the graph Laplacians converge to the Laplace-Beltrami operator in a suitable sense. \\
To be more precise, we would like to have a sequence $(p_n)_{n=1}^\infty$ in $M$ and construct a sequence of grids $(G^N)_{N=1}^\infty$ by setting $G^N=\{p_1,..,p_N\}$. On each $G^N$, we would like to define a random walk $X^N$ which jumps from $p_i$ to $p_j$ with (symmetric) rate $W^N_{ij}$ with the property that there exists some function $a:\N\rightarrow[0,\infty)$ and some constant $C>0$ such that for each smooth $\phi$
\begin{equation*}
    a(N)\sum_{j=1}^NW^N_{ij}(\phi(p_j)-\phi(p_i))\longrightarrow C\Delta_M\phi(p_i)\quad(N\rightarrow\infty)
\end{equation*}
where the convergence is in the sense that for all smooth $\phi:M\rightarrow \R$
\begin{equation}\label{unifapproxgrid}
    \lim_{N\rightarrow\infty} \frac{1}{N}\sum_{i=1}^N \left|a(N)\sum_{j=1}^NW^N_{ij}(\phi(p_j)-\phi(p_i))- C\Delta_M\phi(p_i)\right|= 0.
\end{equation}
\begin{defi}
We call a sequence of grids and corresponding weights $(G_N,W_N)_{N=1}^\infty$ uniformly approximating grids if they satisfy~(\ref{unifapproxgrid}).
\end{defi}
\begin{rmk}[Comparison with standard grids]\label{comprmk}
To give an idea of how known grids in Euclidean spaces can be incorporated in this framework, let $S$ be the one-dimensional torus. Let $S^N$ be the grid that places a grid point in $k/N, k=1,..,N$.
Now we can define a nearest neighbour random walk by putting $W^N_{ij}=\1_{|p_i-p_j|=1/N}$. Also set $a(N)=N^2$. Then we see for a point $p_i\in S^N$ for $N=2^m$ for some $m\in\N$ that
\begin{equation*}
    a(N)\sum_{j=1}^NW^N_{ij}(\phi(p_j)-\phi(p_i)) = N^2 (\phi(p_i+1/N)+\phi(p_i-1/N)-2\phi(p_i)) = \phi''(p_i)+O(N^{-1}).
\end{equation*}
The compactness of the torus easily implies that this rest term can be bounded uniformly. This implies that~(\ref{unifapproxgrid}) holds.
\end{rmk}
We will show in section~\ref{part1} that if we define the Symmetric Exclusion Process on uniformly approximating grids we can prove that its hydrodynamic limit satisfies the heat equation on $M$. \\
\\
It is not obvious how uniformly approximating grids could be defined. Most natural grids in Euclidean settings involve some notion of equidistance, scaling or translation invariance. All of these concepts are very hard if not intrinsically impossible to define on a manifold. The current section is dedicated to showing that uniformly approximating grids actually exist. To be more precise, we will show that a sequence $(p_n)_{n=1}^\infty$ can be used to define such grids if the empirical measures $1/N\sum_{i=1}^N\delta_{p_i}$ converge to the uniform distribution in Kantorovich sense. In section~\ref{randomGrid} we will show that such sequences exist: they are obtained with probability 1 when sampling uniformly from the manifold, i.e. from the normalized Riemannian volume measure.\\
\\
For the calculations of this section, we need a result that forms the core of proving the invariance principle, which we have proved in section~\ref{part3}.\\
\\
\begin{rmk} At first sight the requirement that the empirical measures approximate the uniform measure and that the grid points can be sampled uniformly seems arbitrary, but this is actually quite natural. We want to construct a random walk with symmetric jumping rates (we need this for instance for the Symmetric Exclusion Process later). This implies that the invariant measure of the random walk is the counting measure, so the random walk spend on average the same amount of time in each point of the grid. Hence the amount of time that the random walk spends in some subset of the manifold is proportional to the amount of grid points in that subset. Since we want the random walk to approximate Brownian motion and the volume measure is invariant for Brownian motion, we want the amount of time that the random walk spends in a set to be proportional to the volume of the set. This means that the amount of grid points in a subset of $M$ should be proportional to the volume of that subset. This suggests that the empirical measures $1/N\sum_{i=1}^N\delta_{p_i}$ should in some sense approximate the uniform measure. Moreover, a natural way to let the amount of grid points in a subset be proportional to its volume is by sampling grid points from the uniform distribution on the manifold.
\end{rmk}

\subsection{Model and motivation}\label{1modmot}
\textbf{Motivation}\\
In some areas of statistics the following is known and used (see for instance~\cite{singer2006graph}). Suppose we have a manifold $M$ that is imbedded in $\R^m$ for some $m$ and we would like to recover the manifold from some observations of it, say an i.i.d. sample of uniform random elements of $M$. To do this we can describe the observations as a graph with as weight on the edge between two points a semi positive kernel with bandwidth $\epsilon$ applied to the Euclidean distance between those points. Then it can be shown that the graph Laplacian of the graph that is obtained in this way converges in a suitable sense to the Laplace-Beltrami operator on $M$ as the number of observations goes to infinity and $\epsilon$ goes to $0$. This suggests that we could define random walks on such random graphs and that the corresponding generators converge to the generator of Brownian motion. We generalize this idea by taking a more general sequence of graphs, but our main example (in section~\ref{randomGrid}) will be this random graph.\\
\\
A main point of concern is the following: we prefer to view the manifold $M$ on its own instead of imbedded in a Euclidean space. This means that we would like to use the distance that is induced by the Riemannian metric instead of the Euclidean distance. The latter is more suitable to purposes in statistics, because in that setting the Riemannian metric on $M$ is not known beforehand. Also, a lot is known about the behaviour of the Euclidean distance in this type of situation and not so much about the distance on the manifold. We will have to make things work in $M$ itself.\\
\\
\textbf{Model}\\
Let $M$ be a compact and connected Riemannian manifold. We call a function $f$ on $M$ Lipschitz with Lipschitz constant $L_f$ if
\begin{equation*}
    \sup_{p,q\in M}\frac{|f(p)-f(q)|}{d(p,q)} = L_f<\infty.
\end{equation*}
Let $(p_n)_{n\geq 1}$ be a sequence in $M$ such that $\mu^N:=\frac{1}{N}\sum_{i=1}^N\delta_{p_i}$ converges in the Kantorovich sense to $\bar V$ (the uniform distribution on $M$), i.e.
\begin{equation*}
    W_1(\mu^N,\bar V) = \sup_{f\in\mathcal{F}_1(M)} \left\{\int_M f\dd\mu^N - \int_M f \dd \bar V\right\} \rightarrow 0,
\end{equation*}
where $\mathcal{F}_1(M)$ denotes the set of Lipschitz functions $f$ on $M$ that have Lipschitz constant $L_f\leq 1$.
Define the $N^{\text{th}}$ grid $V_N$ as $V_N=\{p_1,..,p_N\}$. Set 
\begin{equation}\label{defepsilon}
    \epsilon:=\epsilon(N):=\left(\sup_{m\geq N} W_1(\mu^m,\bar V)\right)^{\frac{1}{4+d}}.
\end{equation} 
This $\epsilon$ rescales the distance over which particles will jump. Naturally, $\epsilon\downarrow 0$ as $N\rightarrow \infty$ (since $W_1(\mu^N,\bar V)\rightarrow 0$). 
Let $k:[0,\infty)\rightarrow[0,\infty)$ be Lipschitz and compactly supported (for instance $k(x)=(1-x)\1_{[0,1]}(x)$), we will call such $k$ a kernel. Define 
\begin{equation*}
    W^\epsilon_{ij}=k(d(p_i,p_j)/\epsilon)
\end{equation*} 
as the jumping rate from $p_i$ to $p_j$. Here $d$ is the Riemannian metric on $M$. Note that the only dependence on $N$ is through $\epsilon$, hence the notation $W^\epsilon_{ij}$ instead of $W^N_{ij}$. These jumping rates define a random walk on $V_N$. If we regard to points $p_i,p_j$ as having an edge between them if $W^N_{ij}>0$, we want the resulting graph to be connected (to make sense of the random walk and later of the particle systems defined on it). If we assume that there is some $\alpha$ such that $k(x)>0$ for $x\leq \alpha$, one can show that the resulting graph is connected for $N$ large enough. The main reason is that the distance between points that are close to each other goes to zero faster than $\epsilon$. The details of the proof are 
in the appendix. Finally we define
\begin{equation*}
    a(N)=\epsilon^{-2-d}N^{-1}.
\end{equation*}
To prove that the grids are uniformly approximating we have to show~(\ref{unifapproxgrid}), i.e. as the number of points $N$ goes to infinity (and hence the bandwidth $\epsilon$ goes to $0$)
\begin{equation*}
    \frac{1}{N}\sum_{i=1}^N \left|a(N)\sum_{j=1}^NW^\epsilon_{ij}(f(p_j)-f(p_i))- C\Delta_Mf(p_i)\right|\longrightarrow 0 \quad(N\rightarrow\infty).
\end{equation*}
We will prove the following slightly stronger result:
\begin{equation}\label{toprove_grid}
    \sup_{1\leq i\leq N} \left|a(N)\sum_{j=1}^NW^\epsilon_{ij}(f(p_j)-f(p_i))- C\Delta_Mf(p_i)\right|\longrightarrow 0 \quad(N\rightarrow\infty).
\end{equation}
Note that since the process defined above is just a continuous-time random walk its generator is given by
\begin{equation}\label{generatorrandomwalk}
    L^Nf(p_i)=\sum_{j=1}^NW^\epsilon_{ij}(f(p_j)-f(p_i)).
\end{equation}
Therefore we call~(\ref{toprove_grid}) ``convergence of the (rescaled) generators to $\Delta_M$ uniformly in the $p_i$'s for $i\leq N$" or just ``convergence of the generators to $\Delta_M$ uniformly for $i\leq N$". In fact, we will show that the rate of convergence does not depend on $p_i$, so we might as well call it ``uniformly in the $p_i$'s".
\begin{rmk}\label{sgconv}
In fact, we can say more. We denote the semigroups corresponding to the generators $a(N)\sum_{j=1}^NW^\epsilon_{ij}(f(p_j)-f(p_i))$ by $S_t^N$ and the semigroup corresponding to $C\Delta_M$ by $S_t$. Then~(\ref{toprove_grid}) implies that uniformly on compact time intervals
\begin{equation*}
    \sup_{1\leq i\leq N} \left|S_t^Nf|_{G^N}(p_i)- S_tf(p_i)\right|\longrightarrow 0 \quad(N\rightarrow\infty).
\end{equation*}
The proof is a straightforward application of~\cite[Theorem 2.1]{kurtz1969extensions} and a small argument that the extended limit of the generators above (as described in~\cite{kurtz1969extensions}) equals $C\Delta$ since they are equal on the smooth functions.
\end{rmk}
\begin{rmk}
To see why the rescaling $a(N)$ is natural, we can write
\begin{equation*}
    a(N)L^Nf(p_i)=\frac{1}{\epsilon^2}\sum_{j=1}^N\frac{k\left(\frac{d(p_i,p_j)}{\epsilon}\right)}{N\epsilon^d}(f(p_j)-f(p_i)).
\end{equation*}
Since $k$ is a kernel that is rescaled by $\epsilon$ inside, we need the $1/\epsilon^d$ to make sure the integral of the kernel stays of order $1$ as $\epsilon$ goes to $0$. Since the amount of points that the process can jump to equals $N$, we also need the factor $1/N$ to make sure the jumping rate is of order $1$ as $N$ goes to infinity. Also note that the typical distance that a particle jumps with these rates is of order $\epsilon$. This means that space is scaled by $\epsilon$. Hence it is very natural to expect that time should be rescaled by $1/\epsilon^2$, which is exactly what we have. \\
Finally note that in the calculations $N$ is the main parameter and $\epsilon$ an auxiliary parameter depending on $N$. However, conceptually, when the scaling is concerned, the most important parameter is $\epsilon$. $N$ is just the total amount of positions and simply has to grow fast enough as $\epsilon$ goes to $0$. To see why this is true, note that any sequence $\epsilon(N)$ that goes to $0$ more slowly than what we use here will also do. Hence $\epsilon$ should go to $0$ slow enough with respect to $N$ or, equivalently, $N$ should go to infinity fast enough with respect to $\epsilon$.
\end{rmk}
\begin{rmk}
It is also possible to define $W_{ij}^N$ as $p_\epsilon(p_i,p_j)$, the heat kernel after time $\epsilon$, and rescale by $\epsilon^{-1}$ instead of $\epsilon^{-2-d}$. Then the result of section~\ref{replacemeasure} can be proven in the same way (by obtaining some good bounds on Lipschitz constants and suprema of the heat kernel and choosing $\epsilon=\epsilon(N)$ appropriately, see~\cite{gffpaper2018}) and the result of section~\ref{convergenceresult} is a direct consequence of the fact that the Laplace-Beltrami operator generates the heat semigroup. However, for purposes of application/simulation the weights that we have chosen here are much easier to calculate (since only the geodesic distances need to be known, not the heat kernel).
\end{rmk}
\subsection{Replacing empirical measure by uniform measure}\label{replacemeasure}

We would like to show that in this case there is a $C$ independent of $i$ such that for all smooth $f$
\begin{equation*}
    \lim_{N\rightarrow\infty}\epsilon^{-2-d}N^{-1}\sum_{j=1}^N k(d(p_j,p_i)/\epsilon)\left[f(p_j)-f(p_i)\right]= C\Delta_Mf(p_i)
\end{equation*}
uniformly in the $p_i$'s.\\
We can write
\begin{equation}\label{eq1}
    \epsilon^{-2-d}N^{-1}\sum_{j=1}^N k(d(p_j,p_i)/\epsilon)\left[f(p_j)-f(p_i)\right] = \epsilon^{-2-d} \int_M g^{\epsilon,i}\dd\mu^N,
\end{equation}
where 
\begin{equation*}
    g^{\epsilon,i}(p)=k(d(p,p_i)/\epsilon)\left[f(p)-f(p_i)\right].
\end{equation*}
Now~(\ref{eq1}) equals
\begin{equation}\label{twoterms}
    \epsilon^{-2-d} \int_M g^{\epsilon,i}\dd\bar V+\epsilon^{-2-d} \int_M g^{\epsilon,i}\dd(\mu^N-\bar V).
\end{equation}
We will show later that the first term converges to $C\Delta_Mf(p_i)$ (uniformly in the $p_i$'s) as $N\rightarrow\infty$. Therefore it suffices for now to show that the second term converges to $0$, uniformly in the $p_i$'s.\\
Note that $k$ is Lipschitz so it has some Lipschitz constant $L_k<\infty$. This implies that
\begin{equation*}
    \left| k\left(\frac{d(q^1,p_i)}{\epsilon}\right)-k\left(\frac{d(q^2,p_i)}{\epsilon}\right)\right| \leq L_k \left| \frac{d(q^1,p_i)}{\epsilon}-\frac{d(q^2,p_i)}{\epsilon}\right| \leq \frac{L_k}{\epsilon} d(q^1,q^2),
\end{equation*}
by the reverse triangle inequality, so $k(d(\cdot,p_i)/\epsilon)$ has Lipschitz constant $\frac{L_k}{\epsilon}$. $f$ is smooth, so it is Lipschitz too with Lipschitz constant $L_f$. Since $f(p_i)$ is just a constant, $f(\cdot)-f(p_i)$ is also Lipschitz with Lipschitz constant $L_f$. Since they are both bounded functions, we see for the Lipschitz constant of ${g^{\epsilon,j}}$:
\begin{equation*}
    L_{g^{\epsilon,j}}\leq L_{k(d(\cdot,p_i)/\epsilon)} ||f(\cdot)-f(p_i)||_\infty + ||k(d(\cdot,p_i)/\epsilon)||_\infty L_{f(\cdot)-f(p_i)} \leq \frac{2L_k}{\epsilon}||f||_\infty + ||k||_\infty L_f.
\end{equation*}
Note that $k$ is bounded since it is Lipschitz and compactly supported, so $||k||_\infty<\infty$.
This shows that:
\begin{eqnarray*}
    \left|\epsilon^{-2-d} \int_M g^{\epsilon,i}\dd(\mu^N-\bar V)\right|&\leq& \epsilon^{-2-d}\left(\frac{2L_k}{\epsilon}||f||_\infty + ||k||_\infty L_f\right) W_1(\mu^N,\nu)\\
    &=& \epsilon(N)^{-3-d}\left(2L_k||f||_\infty + \epsilon(N)||k||_\infty L_f\right) W_1(\mu^N,\nu),
\end{eqnarray*}
where we denoted the dependence of $\epsilon$ on $N$ explicitly. By~(\ref{defepsilon}), $W_1(\mu^N,\nu)\leq \epsilon(N)^{4+d}$, so we obtain
\begin{equation*}
    \left|\epsilon^{-2-d} \int_M g^{\epsilon,i}\dd(\mu^N-\bar V)\right|\leq \epsilon \left(2L_k||f||_\infty + \epsilon ||k||_\infty L_f\right).
\end{equation*}
Note that this bound does not depend on $p_i$. Since $\epsilon\rightarrow 0$, it follows that the second term of~(\ref{twoterms}) goes to $0$ uniformly in the $p_i$'s.\\
\\
\textbf{What remains}\\
What we have seen above basically means that we can replace the empirical distribution $\mu^N$ by the uniform distribution $\bar V$. For convergence of the generators we still have to show that 
\begin{equation*}
    \lim_{\epsilon\downarrow 0} \epsilon^{-2-d}\int_M k(d(p,p_i)/\epsilon) \left[f(p)-f(p_i)\right] \bar V (\dd p) = C\Delta_Mf(p_i)
\end{equation*}
uniformly in the $p_i$'s. Note that we can replace $N\rightarrow \infty$ by $\epsilon\downarrow 0$, since the expression only depends on $N$ via $\epsilon$ and $\epsilon(N)\downarrow 0$ as $N\rightarrow\infty$. Since the $p_i$'s are all in $M$ we can replace $p_i$ by $q$ and require that the convergence is uniform in $q\in M$.\\
Because of these considerations it remains to show that there exists $C>0$ such that uniformly in $q\in M$:
\begin{eqnarray}\label{whatremains}
    \lim_{\epsilon\downarrow0} \epsilon^{-2-d} \int_M k(d(p,q)/\epsilon) \left[f(p)-f(q)\right] \bar V (\dd p) = C\Delta_Mf(q).
\end{eqnarray}
Note that for every $\epsilon>0$ this expression can be interpreted as the generator of a jump process on the manifold $M$. The process jumps from $p$ to a (measurable) set $Q\subset M$ with rate $\int_Q \epsilon^{-2-d}k(d(p,q)/\epsilon)\dd\bar V$.

\begin{rmk}
Note that this is easy to show in $\R^d$. Indeed, using the transformation $u=(y-x)/\epsilon$ and Taylor, we see
\begin{eqnarray*}
    &&\epsilon^{-2-d} \int_{\R^d} k\left(\frac{\|y-x\|}{\epsilon}\right) (f(y)-f(x))\dd y = \epsilon^{-2} \int_{\R^d} k(\|u\|) (f(x+\epsilon u)-f(x))\dd u\\
    &=& \epsilon^{-1} \int_{\R^d} k(\|u\|) \grad f(x) \cdot u \dd u + \frac{1}{2} \int_{\R^d} k(\|u\|) u^TH(x)u\dd u + O(\epsilon),
\end{eqnarray*}
where $H(x)$ is the Hessian of $f$ in $x$. Now changing coordinates to integrate over each sphere $B_r$ of radius $r$ with respect to the appropriate surface measure $S_r$ and then with respect to $r$, we obtain
\begin{equation*}
    \epsilon^{-1} \int_{\R} k(r) \int_{B_r} \grad f(x) \cdot w S_r(\dd w)\dd r + \frac{1}{2} \int_{\R} k(r) \int_{B_r} w^TH(x)w S_r(\dd w)\dd r + O(\epsilon).
\end{equation*}
Now because of symmetry the integrals of $w_i$ and of $w_iw_j$ over spheres vanish for each $i\neq j$. Moreover the integrals of $w_i^2$ do not depend on $i$, but only on $r$. Therefore the first term vanishes and we are left with
\begin{equation*}
    \frac{1}{2} \int_{\R} k(r) C(r) \Delta f(x) \dd r + O(\epsilon) = C'\Delta f(x) + O(\epsilon).
\end{equation*}
This shows convergence (at least pointwise, for uniform convergence we have to be a little more careful about the $O(\epsilon)$).
\end{rmk}
\subsection{Convergence result}\label{convergenceresult}

\textbf{Integral over tangent space}\\
Let $\alpha>0$ be such that $\supp~k\subset [0,\alpha]$ (such $\alpha$ exists since $k$ is compactly supported). We denote for $p\in M,r>0: B_d(p,r)=\{q\in M: d(p,q)\leq r\}$. Then we can write
\begin{equation}\label{kfilledin}
    \int_M k(d(p,q)/\epsilon)(f(q)-f(p)) \bar V(\dd q) = \int_{B_d(p,\alpha\epsilon)} k(d(p,q)/\epsilon) (f(q)-f(p)) \bar V(\dd q).
\end{equation}\sk

Denote for $\eta\in T_pM, r>0: B_p(\eta,r)=\{\xi\in T_pM: ||\xi-\eta||\leq r\}$ (not to be confused with $B_\rho$, which is a ball in $M$ with respect to the original metric $\rho$). For $\epsilon$ small enough we know that $\exp_p: T_pM\supset B_p(0,\alpha\epsilon) \rightarrow B_d(p,\alpha\epsilon)\subset M$ is a diffeomorphism. We want to use this to write the integral above as an integral over $B_p(0,\epsilon)\subset T_pM$:
\begin{eqnarray}
    \int_{B_d(p,\alpha\epsilon)} k(d(p,q)/\epsilon) (f(q)-f(p)) \bar V(\dd q) &=& \int_{B_p(0,\alpha\epsilon)} k(d(p,\exp_p(\eta))/\epsilon) (f(\exp_p(\eta) )-f(p)) \bar V\circ\exp(\dd \eta)\nonumber \\
    &=& \int_{B_p(0,\alpha)} k(d(p,\exp_p(\epsilon \eta))/\epsilon) (f(\exp_p(\epsilon\eta))-f(p)) \bar V\circ\exp\circ\lambda_\epsilon(\dd \eta) \nonumber \\ 
    &=& \int_{B_p(0,\alpha)} k(||\eta||) (f(\exp_p(\epsilon\eta))-f(p)) \bar V\circ\exp\circ\lambda_\epsilon(\dd \eta).\label{integral}
\end{eqnarray}
This means we integrate with respect to the measure $\bar V\circ\exp\circ\lambda_\epsilon$, where $\lambda_\epsilon$ denotes multiplication with $\epsilon$.\\
\\
\textbf{Determining the measure $\bar V\circ\exp\circ\lambda_\epsilon$}\\
Since $B_p(0,\alpha\epsilon)$ is a star-shaped open neighbourhood of $0$, we see that for $\epsilon$ small enough $V_\epsilon:=B_d(p,\alpha\epsilon)=\exp_p(B_p(0,\alpha\epsilon))$ is a normal neighbourhood of $p$, so there exists a normal coordinate system $(x,V_\epsilon)$ that is centered at $p$. We interpret, for $v\in \R^n$, $v_p\in T_pM$ as $\sum_i v_i \frac{\partial}{\partial x^i}$. Consequently, when we write $A_p$ for some subset $A$ of $\R^n$, we mean $\{v_p: v\in A\}$. Since the basis $W=\left(\frac{\partial}{\partial x^1}...,\frac{\partial}{\partial x^n}\right)$ is orthogonal in $T_pM$, it is easy to see that $\phi:=v_p\mapsto v$ preserves the inner product and is an isomorphism of inner product spaces. Indeed, 
\begin{equation*}
    ||v_p||^2=\left<v_p,v_p\right>=(v_p)^i(v_p)^jg_{ij}=\sum_{ij}v^iv^j\delta^i_j=\sum_i (v^i)^2=||v||^2.
\end{equation*} 
In particular $B_{\R^n}(0,\alpha\epsilon)_p=B_p(0,\alpha\epsilon)$ (where $B_{\R^n}$ denotes a ball in $\R^n$ with respect to the Euclidean metric). We can use this in the following lemma, which tells us more about $\bar V\circ\exp\circ\lambda_\epsilon$.
\begin{lemma}\label{measure} There exist $\epsilon'>0$ and a function $h:B_{\R^n}(0,\epsilon')\rightarrow \R$ such that for $t$ tending to $0$ $h(t)=O(||t||^2)$ and for all $0<\epsilon<\epsilon'$:
$\bar V\circ \exp\circ\lambda_\epsilon=\epsilon^n \left(\frac{1+h(\epsilon t)}{V(M)} \dd t^1..\dd t^n \right)\circ\phi$ on $B_p(0,\alpha)$.
\end{lemma}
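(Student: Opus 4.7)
The statement is essentially a computation of how the normalized Riemannian volume $\bar V$ pulls back under $\exp_p \circ \lambda_\epsilon$, written in the normal coordinates $(x,V_\epsilon)$ centered at $p$. The proof has three ingredients: (i) the change–of–variables formula for measures given in local coordinates by a density, (ii) the behaviour of the metric components $g_{ij}$ in normal coordinates near the origin, and (iii) the fact that in these coordinates the exponential map is, up to the identification $\phi$, the identity.

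\textbf{Step 1: write $\bar V$ in normal coordinates.} On the chart domain $V_\epsilon$, the Riemannian volume measure is $\sqrt{\det g(x)}\, dx^1\cdots dx^n$, where $g$ is the metric matrix in the coordinates $x$. Hence $\bar V = \tfrac{1}{V(M)}\sqrt{\det g(x)}\,dx^1\cdots dx^n$ on $V_\epsilon$.

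\textbf{Step 2: pull back via $\exp_p\circ\lambda_\epsilon$.} Because $(x,V_\epsilon)$ is a normal chart centered at $p$, one has $\exp_p(v_p) = x^{-1}(\phi(v_p))$ for every $v_p \in B_p(0,\alpha\epsilon)$. Thus, for $\eta \in B_p(0,\alpha)$ with $\phi(\eta)=t$, the composition $\exp_p\circ\lambda_\epsilon$ corresponds in coordinates to $t\mapsto \epsilon t$. Applying the change of variables (the Jacobian of $\lambda_\epsilon$ contributes the factor $\epsilon^n$) yields
\begin{equation*}
    \bar V \circ \exp\circ \lambda_\epsilon \;=\; \epsilon^n \left(\frac{\sqrt{\det g(\epsilon t)}}{V(M)}\, dt^1\cdots dt^n\right)\circ \phi
\end{equation*}
on $B_p(0,\alpha)$, provided $\epsilon$ is small enough that $\epsilon\cdot B_{\R^n}(0,\alpha)\subset x(V_\epsilon)$; choose $\epsilon'>0$ accordingly.

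\textbf{Step 3: identify $h$ and control its order.} Set $1+h(\epsilon t) := \sqrt{\det g(\epsilon t)}$, i.e.\ define $h$ on $B_{\R^n}(0,\epsilon')$ by $h(u) = \sqrt{\det g(u)} - 1$. Since the chart is normal at $p$, the standard identities $g_{ij}(0)=\delta_{ij}$ and $\partial_k g_{ij}(0)=0$ hold (the latter is equivalent to the vanishing of the Christoffel symbols at $0$, which follows because radial lines through $0$ are geodesics). Smoothness of $g_{ij}$ then gives $g_{ij}(u) = \delta_{ij} + O(\|u\|^2)$, and so $\det g(u) = 1 + O(\|u\|^2)$ and finally $\sqrt{\det g(u)} = 1 + O(\|u\|^2)$ as $u\to 0$. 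This is exactly the claim $h(t) = O(\|t\|^2)$.

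\textbf{Anticipated obstacle.} The only non-routine point is justifying the $O(\|u\|^2)$ order; this really requires the normal-coordinate identities $\partial_k g_{ij}(0)=0$, which is why the choice of chart (normal coordinates centered at $p$) is essential and cannot be replaced by an arbitrary chart. Everything else is bookkeeping with the change–of–variables formula.
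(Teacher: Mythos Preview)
Your proof is correct and follows essentially the same route as the paper: write $\bar V$ in normal coordinates as $\tfrac{1}{V(M)}\sqrt{\det g}\,dx^1\cdots dx^n$, use that $x\circ\exp_p=\phi$ on the relevant ball, pull back by $\lambda_\epsilon$ picking up the factor $\epsilon^n$, and set $h(u)=\sqrt{\det g(u)}-1$. The only difference is that the paper cites an external reference for the expansion $\sqrt{\det g(u)}=1+O(\|u\|^2)$ in normal coordinates, whereas you derive it directly from the standard identities $g_{ij}(0)=\delta_{ij}$ and $\partial_k g_{ij}(0)=0$; your argument is slightly more self-contained but otherwise equivalent.
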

\begin{proof}
Let $\epsilon'$ be small enough such that the considerations above the lemma hold and let $\epsilon<\epsilon'$. For clarity of the proof, we first separately prove the following statement.\\
\\
\textit{\underline{Claim:}} $x\circ\exp=\phi$ on $B_{\R^n}(0,\alpha\epsilon)_p$.\\
\textit{Proof.} The geodesics through $p$ are straight lines with respect to $x$, so they are of the form $x(\gamma(t))=ta+b$ with $a,b\in\R^n$. For $\eta=\sum_i \eta^i\frac{\partial}{\partial x^i}$, the geodesic starting at $p$ with tangent vector $\eta$ at $p$ should satisfy $b=x(p)=0$ and $a_i=\eta^i$ for all $i$, so we see $\gamma^k=t\eta^k$.  For $q\in B_d(p,\alpha\epsilon)$, we see $x^k(\exp(x(q)_p))=1*x^k(q)=x^k(q)$, so $\exp(x(q)_p)=q$. This also shows that $x\circ\exp(v_p)=v$ for $v\in B_{\R^n}(0,\alpha\epsilon)$ (since $x$ is invertible), which gives an identification 
\begin{equation*}
    x\circ \exp: T_pM\supset B_{\R^n}(0,\alpha\epsilon)_p \rightarrow B_{\R^n}(0,\alpha\epsilon)\subset\R^n
\end{equation*} 
which is the restriction of $\phi$ to $B_{\R^n}(0,\alpha\epsilon)_p$. This situation is sketched in figure~\ref{proof_measure}. \qed\\
\\
Now we will first use the definition of integration to see what the measure is in coordinates (so it becomes a measure on a subset of $\R^n$). Then we will use the claim above: we will pull the measure on $\R^n$ back to $T_pM$ using $\phi$.\\
On $(x,V_\epsilon)$ the volume measure is given by $\sqrt{\det G} \dd x^1\wedge..\wedge\dd x^n$. According to~\cite[Cor 2.3]{wangzuoq2016normal}, $\sqrt{\det G}$ can be expanded (in normal coordinates) as $1+h(x)$ where $h$ is such that $h(x)=O(||x||^2)$. Now the measure can be written in local coordinates on $B_{\R^n}(\alpha\epsilon')$ as $(1+h(x))\dd x^1\wedge..\wedge\dd x^n$, so the uniform measure is $\frac{1+h(x)}{V(M)}\dd x^1\wedge..\wedge\dd x^n$. This yields the measure $\bar V\circ x^{-1}=\frac{1+h(t)}{V(M)} \dd t^1..\dd t^n$ on $x(V_{\epsilon'})=B_{\R^n}(0,\alpha\epsilon')$. We have on $B_{\R^n}(0,\alpha)_p$:
\begin{equation*}
    \bar V\circ\exp\circ\lambda_\epsilon=(\bar V\circ x^{-1})\circ(x\circ\exp)\circ\lambda_\epsilon.
\end{equation*}
According to the claim above, $x\circ\exp$ is a restriction of $\phi$, so we can replace it by $\phi$. Since this map is linear, it can be interchanged with $\lambda_\epsilon$, which yields (inserting what we found before and since $\epsilon<\epsilon'$):
\begin{equation*}
    \left(\frac{1+h(t)}{V(M)} \dd t^1..\dd t^n \right)\circ\lambda_\epsilon\circ\phi=\left(\frac{\epsilon^n(1+h(\epsilon t))}{V(M)} \dd t^1..\dd t^n \right)\circ\phi.
\end{equation*}
In the last step we interpret $\frac{\epsilon^n(1+h(\epsilon t))}{V(M)} \dd t^1..\dd t^n$ as a measure on $B_{\R^n}(0,\alpha)$ and this last step is then just a transformation of measures on $\R^n$. This yields the expression that we want.
\end{proof}

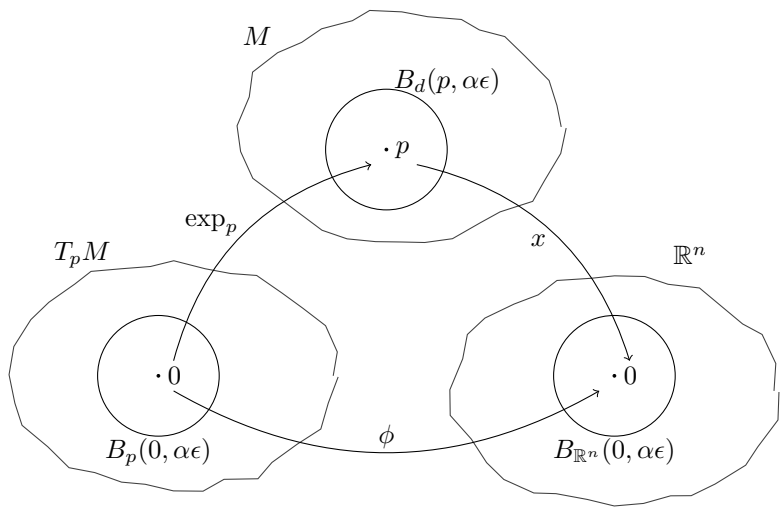
\begin{figure}[ht]
\begin{centering}
\begin{tikzpicture}[pencildraw/.style={
    black!75,
    decorate,
    decoration={random steps,segment length=10pt,amplitude=2pt}
    }
]
\draw (3,2) circle [radius=0.8];
\draw (9,2) circle [radius=0.8];
\draw (6,5) circle [radius=0.8];

\node [right] at (6,5) {$p$};
\draw [fill] (6,5) circle [radius=0.5pt];
\node [right] at (9,2) {$0$};
\draw [fill] (9,2) circle [radius=0.5pt];
\node [right] at (3,2) {$0$};
\draw [fill] (3,2) circle [radius=0.5pt];

\draw [pencildraw] (3.2,2) ellipse (2.1cm and 1.5cm);
\draw [pencildraw] (9,1.8) ellipse (2.1cm and 1.5cm);
\draw [pencildraw] (6.2,5.3) ellipse (2.1cm and 1.5cm);

\draw [->] (3.2,2.2) to [bend left] (5.8,4.8);
\draw [->] (6.4,4.8) to [bend left] (9.2,2.2);
\draw [->] (3.2,1.8) to [bend right] (8.8,1.8);

\node at (2,3.6) {$T_pM$};
\node at (4.3,6.5) {$M$};
\node at (10,3.6) {$\R^n$};

\node at (3.7,4) {$\exp_p$};
\node at (8,3.8) {$x$};
\node at (6,1.2) {$\phi$};

\node at (6.8,5.9) {$B_d(p,\alpha\epsilon)$};
\node [below] at (3,1.3) {$B_p(0,\alpha\epsilon)$};
\node [below] at (9,1.3) {$B_{\R^n}(0,\alpha\epsilon)$};
\end{tikzpicture}
\caption{The situation in lemma~\ref{measure}. On $B_p(0,\alpha\epsilon)$: $x\circ\exp=\phi$. The uniform measure on $B_d(p,\alpha\epsilon)$ is moved via $x$ to $B_{\R^n}(0,\alpha\epsilon)$ using the formula $\sqrt{\det G}t_1..t_n$. This measure can then be pulled back to $B_p(0,\alpha\epsilon)$ using $\phi$. Since $\phi$ is an inner product space isomorphism, it will be easy to deal with orthogonal transformations later, in lemma~\ref{mucan}.}
\label{proof_measure}
\end{centering}
\end{figure}

\begin{rmk}
We used~\cite[Cor 2.3]{wangzuoq2016normal} in the proof above. In these notes the expansion of $\sqrt{\det G(p,x)}$ is calculated around a point $p$ in normal coordinates $x$ centered around $p$:
\begin{equation}\label{toProveUniform}
    \sqrt{\det G(p,x)} = 1 - \frac{1}{6}\text{Ric}(p)_{kl}x^kx^l+O\left(|x|^3\right).
\end{equation}
As can be seen, there are no linear terms in the expansion. The coefficients for the quadratic terms are coefficients of the Ricci curvature of $M$ in $p$.  This implies that the way that the uniform distribution on a ball around $p$ in $M$ is pulled back to the tangent space via the exponential map depends on the curvature of $M$ in $p$. In particular, if there is no curvature, $M$ is locally isomorphic to a neighbourhood in $\R^n$ so the same thing happens as in $\R^n$. This means that we get a uniform distribution on a ball around $0$ in the tangent space.
\end{rmk}\sk
\begin{rmk}\label{gap}
We will need in proposition~\ref{endprop} that the statement of lemma~\ref{measure} holds uniformly in all points of the manifold. This means that the difference between the uniform measure on a ball in the tangent space and the pulled back uniform measure on a geodesic ball in the manifold decays quadratically with $\epsilon$ uniformly in the manifold. Note that this uniform convergence is intuitively clear, since the difference between the two measures is caused by curvature and curvature is bounded in a compact manifold. As in the proof of lemma~\ref{measure}, one needs to write
\begin{equation*}
    \sqrt{\det G}(\exp_p(x)) = 1 + h_p(x)
\end{equation*}
for some function $h_p$ that is $O(|x|^2)$ independent of $p$. Here $G(q)$ is the metric matrix at $q$ expressed in (fixed) normal coordinates centered at $p$. Since $\sqrt{\hphantom{a}}$ and $\det$ are uniformly continuous in the right domains, it suffices to show that 
\begin{equation}\label{quadapprox}
    G(\exp_p(x)) = I + O(|x|^2),
\end{equation}
where the $O(|x|^2)$ is independent of $p$. In other words,
\begin{equation}\label{quadapproxprecise}
    ||G(\exp_p(x))-I||\leq C||x||^2,
\end{equation}
where $C$ does not depend on $p$.
For all $p\in M$ (and for any system of normal coordinates centered at $p$) we have the following Taylor expansion (note that for fixed $p$ $G(\exp_p(\cdot))_{ij}$ is a map from a (subset of) $\R^d$ to $\R$):
\begin{equation}\label{Taylor}
    G(\exp_p(x))_{ij} = \delta_{ij} + \frac{1}{3}R_{ijkl}x^kx^l + \sum_{|\beta|=3} \frac{3}{\beta!}\int_0^1 (1-t)^2 D^\beta G(\exp_p(\cdot))_{ij}(tx)\dd t\cdot x^\beta.
\end{equation}
From this we get~\eqref{quadapproxprecise} directly for fixed $p$, i.e. we have
\begin{equation*}
    ||G(\exp_p(x))-I||\leq C_p||x||^2.
\end{equation*} 
In order to obtain uniformity of $C_p$ in $p$, we note that the functions of $p$ and $x$ appearing in the r.h.s. of~\eqref{Taylor} can be made smooth both in $p$ and $x$. Smoothness in $x$ is obvious (within the injectivity radius) and smoothness in $p$ follows from a special choice of normal coordinates in such a way that they vary smoothly with $p$. A choice of normal coordinates is equivalent to a choice of an orthonormal basis, so one can construct smoothly varying normal coordinates by taking a smooth section of the orthonormal frame bundle (this can only be done locally, but it is enough to have the uniformity result locally, since then by compactness one has it globally). By compactness, the injectivity radius is bounded from below by some $\delta>0$. Now for all $p\in M$ and $||x||<\delta$,~\eqref{Taylor} holds and (locally) the quantities on the r.h.s. vary smoothly and therefore (again by compactness) one can show that $C:=\sup_p C_p$ is finite.
\end{rmk}

\textbf{A canonical part plus a rest term}\\
Now define
\begin{equation*}
    \mu=\left(\frac{1}{V(M)} \dd t^1..\dd t^n \right)\circ\phi \hspace{0.6cm} \text{  and  } \hspace{0.6cm}
    \mu_R=\left(\frac{h(\epsilon t)}{V(M)} \dd t^1..\dd t^n \right)\circ\phi
\end{equation*}
on $B_p(0,\alpha)$ and $0$ everywhere else. 
Then the lemma implies that~(\ref{integral}) equals
\begin{eqnarray*}
    &&\int_{B_p(0,\alpha)} k(||\eta||) (f(\exp_p(\epsilon\eta))-f(p)) \epsilon^n (\mu+\mu_R)(\dd \eta) =\epsilon^n  \int_{T_pM}  (f(p(\epsilon,\eta))-f(p)) k(||\eta||) (\mu+\mu_R)(\dd \eta).
\end{eqnarray*}
Recall that $p(\epsilon,\eta)$ is just notation for following the geodesic from $p$ in the direction of $\eta$ for time $\epsilon$. Now we define $\mu^k=k(||\cdot||)\mu$ (so the measure which has density $k(||\cdot||)$ with respect to $\mu$) and analogously $\mu^k_R=k(||\cdot||)\mu_R$. Then we can write the integral above as
\begin{equation*}
    \epsilon^n  \int_{T_pM}  (f(p(\epsilon,\eta))-f(p)) (\mu^k+\mu_R^k)(\dd \eta).
\end{equation*}
In this way we transformed the integral to one that we worked with in section~\ref{1generators} since we wrote it as the generator of a geodesic random walk (see $L_N$ on page~\pageref{1generators}). To use the theory that we obtained in that section, we need the following lemma. It tells us that $\mu^k$ can be used as a stepping distribution for a geodesic random walk and it gives us the constant speed of the Brownian motion to which it converges (see section~\ref{3stepdist}).
\begin{lemma}\label{mucan}
$\mu^k$ is canonical. Moreover $\int_{T_pM} ||\eta||^2 \mu^k(\dd\eta)=\frac{2\pi^{n/2}}{V(M)\Gamma(n/2)}\int_0^\infty k(r)r^{n+1}\dd r$.
\end{lemma}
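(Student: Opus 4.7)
The plan is to transport everything to $\R^n$ via the inner product space isomorphism $\phi\colon T_pM\to\R^n$ that was set up just before the lemma, and then to exploit the radial symmetry of the resulting measure. Recall that $\mu=\left(\frac{1}{V(M)}\dd t^1\cdots\dd t^n\right)\circ\phi$ on $B_p(0,\alpha)$ (and zero elsewhere), so $\mu$ is, up to the factor $1/V(M)$, the pullback via $\phi$ of Lebesgue measure restricted to the Euclidean ball $B_{\R^n}(0,\alpha)$. Since $\phi$ preserves the inner product, the radial weight $k(\|\cdot\|)$ corresponds under $\phi$ to $k(\|\cdot\|_{\R^n})$, so $\mu^k$ is the $\phi$-pullback of $\frac{1}{V(M)}k(\|t\|)\dd t^1\cdots\dd t^n$.

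For the first assertion I would verify the stronger property that $\mu^k$ is invariant under every orthogonal transformation $T$ of $T_pM$, which by the discussion right after the definition of canonical measures in section~\ref{3stepdist} already implies canonicity. Let $T$ be orthogonal on $T_pM$. Then $\hat T:=\phi\circ T\circ\phi^{-1}$ is an orthogonal transformation of $\R^n$ in the usual Euclidean sense (since $\phi$ is an inner product space isomorphism). Euclidean Lebesgue measure on $\R^n$ is $\hat T$-invariant, and so is the set $B_{\R^n}(0,\alpha)$ and the radial function $t\mapsto k(\|t\|)$. Pulling back via $\phi$, it follows that $\mu^k\circ T^{-1}=\mu^k$, and hence $\mu^k$ is canonical.

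For the moment computation I would just change variables via $\phi$ and pass to spherical coordinates:
\begin{eqnarray*}
\int_{T_pM}\|\eta\|^2\mu^k(\dd\eta)
&=&\frac{1}{V(M)}\int_{\R^n}\|t\|^2k(\|t\|)\,\dd t^1\cdots\dd t^n\\
&=&\frac{1}{V(M)}\int_0^\infty r^2k(r)\,\sigma(S^{n-1})r^{n-1}\,\dd r\\
&=&\frac{2\pi^{n/2}}{V(M)\Gamma(n/2)}\int_0^\infty k(r)r^{n+1}\,\dd r,
\end{eqnarray*}
where I used that $\phi$ preserves norms in the first equality, the standard surface area formula $\sigma(S^{n-1})=2\pi^{n/2}/\Gamma(n/2)$ in the second, and that $k$ is compactly supported (so all integrals converge) throughout.

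There is essentially no obstacle here once the identification via $\phi$ is made; the only subtlety to keep an eye on is that $\mu$ lives on the truncated ball $B_p(0,\alpha)$ rather than all of $T_pM$, but this is harmless both for orthogonal invariance (the ball is rotationally symmetric) and for the moment integral (the kernel $k$ is supported in $[0,\alpha]$).
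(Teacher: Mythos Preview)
Your proposal is correct and follows essentially the same approach as the paper: transport $\mu^k$ to $\R^n$ via the isometry $\phi$, observe that the resulting radial measure is invariant under Euclidean orthogonal maps (hence $\mu^k$ is invariant under orthogonal maps on $T_pM$, so canonical), and compute the second moment by switching to spherical coordinates using $\sigma(S^{n-1})=2\pi^{n/2}/\Gamma(n/2)$. The paper's proof is organised the same way, with the minor notational difference that it names the transported measure $\nu^k$ and writes out a couple more intermediate equalities in the moment chain.
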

\begin{proof}
First of all recall that $k$ is continuous and compactly supported, so the integral over $k$ above makes sense and is finite. Define $\nu=\frac{1}{V(M)} \dd t^1..\dd t^n$ on $B_{\R^n}(0,\alpha)$ and $0$ everywhere else. Then we can write $\mu=\nu\circ\phi$. Since $\phi$ preserves the norm, we see that $k(||\cdot||_{T_pM})\circ \phi^{-1}=k(||\cdot||_{\R^n})$. This means that $\mu^k=\nu^k\circ\phi$, where $\nu^k:=k(||\cdot||)\nu$. Since $\phi$ preserves the inner product, the measure $\mu^k$ behaves the same with respect to orthogonal transformations in $T_pM$ as $\nu^k$ with respect to orthogonal transformations in $\R^n$. Since $\nu^k$ is clearly preserved under such transformations, so is $\mu^k$. This shows that $\mu^k$ is canonical.\\
Now we calculate the corresponding constant.
\begin{eqnarray*}
\int_{T_pM} ||\eta||_{T_pM}^2\mu^k(\dd \eta) &=& \int_{T_pM} ||v_p||_{T_pM}^2\mu^k(\dd v_p) = \int_{\R^n} ||\phi^{-1}(v)||_{T_pM}^2\nu^k(\dd v)
\\ &=& \int_{\R^n} ||v||_{\R^n}^2\nu^k(\dd v) = \frac{1}{V(M)} \int_{B_{\R^n}(0,\alpha)} ||v||_{\R^n}^2k(||v||_{\R^n})\dd v
 \\ &=& \frac{1}{V(M)}\int_0^\alpha r^2 k(r) \frac{2\pi^{n/2}}{\Gamma(n/2)}r^{n-1}\dd r = \frac{2\pi^{n/2}}{V(M)\Gamma(n/2)}\int_0^\infty k(r)r^{n+1}\dd r
\end{eqnarray*}
The first step was just writing the integral with respect to the coordinates for which we defined $\mu$. The second step holds because $\mu^k=\nu^k\circ\phi$. The third uses the fact that $\phi$ preserves the norm. The penultimate step is a change of coordinates in $\R^n$ using the fact that $||v||$ is constant on spheres around the origin. Here $\frac{2\pi^{n/2}}{\Gamma(n/2)}r^{n-1}$ is the area of $rS_{n-1}$. In the last step we used that $\supp (k)\subset[0,\alpha]$.
\end{proof}\sk

\textbf{Conclusion}\\
We use everything above to obtain the statement that we aim for.
\begin{prop}\label{endprop}
Set
\begin{equation*}
    C=\frac{\pi^{n/2}}{V(M)n\Gamma(n/2)}\int_0^\infty k(r)r^{n+1}\dd r.
\end{equation*}
Then as $\epsilon\rightarrow 0$ we have uniformly in $p\in M$:
\begin{eqnarray*}
    \epsilon^{-2-n} \int_M k(d(p,q)/\epsilon) \left[f(q)-f(p)\right] \bar V (\dd q) \longrightarrow C \Delta_M f(p)
\end{eqnarray*}
\end{prop}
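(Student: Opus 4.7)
The plan is to reduce the integral to one over the tangent space via the exponential map, and then recognize it as the rescaled generator of a geodesic random walk of the type treated in Section~\ref{1generators}. Specifically, by the calculation leading to~(\ref{integral}) and Lemma~\ref{measure}, I can rewrite
\begin{equation*}
\epsilon^{-2-n}\int_M k(d(p,q)/\epsilon)(f(q)-f(p))\bar V(\dd q)
= \epsilon^{-2}\int_{T_pM}(f(p(\epsilon,\eta))-f(p))(\mu^k+\mu_R^k)(\dd \eta),
\end{equation*}
so it suffices to handle the two pieces on the right separately: show that the $\mu^k$ term converges uniformly to $C\Delta_M f(p)$ and that the $\mu_R^k$ term vanishes uniformly.

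For the main term, set $N=1/\epsilon$; then $\epsilon^{-2}\int_{T_pM}(f(p(\epsilon,\eta))-f(p))\mu^k(\dd \eta)=N^2 L_N f(p)$ is precisely the generator of the geodesic random walk from Section~\ref{1generators} with jumping distributions $\{\mu_p^k\}_{p\in M}$. Lemma~\ref{mucan} shows each $\mu_p^k$ is canonical, and by Proposition~\ref{sumprop} it satisfies the relaxed condition~(\ref{symreq}) with $c=n^{-1}\int\|\eta\|^2\mu^k(\dd \eta)$; inserting the explicit value from Lemma~\ref{mucan} gives $c/2=C$. Moreover, $\supp \mu_p^k\subset B_p(0,\alpha)$ and the total mass is uniformly bounded by $\|k\|_\infty V(B_p(0,\alpha))/V(M)$, both independently of $p$. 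Proposition~\ref{genconv}, applied in the mildly generalized form (covariance $cg^{ij}$ rather than $g^{ij}$, giving limit $(c/2)\Delta_M f$ instead of $(1/2)\Delta_M f$, as noted at the start of Section~\ref{3stepdist}), then yields uniform convergence to $C\Delta_M f(p)$.

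For the rest term I need to estimate $\epsilon^{-2}\int_{T_pM}(f(p(\epsilon,\eta))-f(p))\mu_R^k(\dd \eta)$. The density of $\mu_R^k$ in normal coordinates around $p$ carries the factor $h_p(\epsilon t)k(\|t\|)$, and Remark~\ref{gap} supplies the uniform bound $|h_p(\epsilon t)|\leq C_h\epsilon^2\|t\|^2$ valid for all $p\in M$ and $\|t\|\leq\alpha$. Combined with the Lipschitz bound $|f(p(\epsilon,\eta))-f(p)|\leq L_f\epsilon\|\eta\|$, the compact support of $k$, and the bounded volume of the tangent ball, the integrand is $O(\epsilon^3)$ uniformly in $p$, so after the $\epsilon^{-2}$ prefactor the contribution is $O(\epsilon)$ and hence vanishes uniformly.

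The main technical obstacle is uniformity in $p\in M$ at every step. For the main term this requires that the estimates inside the proof of Proposition~\ref{genconv} (bounds on $f_i,f_{ij},f_{ijk}$, on Christoffel symbols $\Gamma^i_{rs}$ and their derivatives, and on the coordinate components $\eta^i$ via Lemma~\ref{valofcomp}) can be taken independent of the base point; this follows from compactness of $M$ through the finite cover by the charts $(x_{p_j},U_{p_j})$ constructed in Section~\ref{1generators}. For the rest term the essential ingredient is that the quadratic expansion $\sqrt{\det G(\exp_p(x))}=1+h_p(x)$ with $h_p(x)=O(\|x\|^2)$ is uniform in $p$, which is precisely the content of Remark~\ref{gap} and in turn hinges on a smoothly varying choice of normal coordinates plus compactness of $M$.
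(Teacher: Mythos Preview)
Your proposal is correct and follows essentially the same route as the paper: the same decomposition into the canonical part $\mu^k$ and the rest $\mu_R^k$, the same appeal to Proposition~\ref{genconv} together with Proposition~\ref{sumprop} and Lemma~\ref{mucan} for the main term, and the same use of Remark~\ref{gap} plus the Lipschitz bound on $f$ to kill the rest term uniformly. The only cosmetic difference is that you phrase the main term as $N^2L_Nf(p)$ with $N=1/\epsilon$, whereas the paper simply invokes the results of Sections~\ref{1generators}--\ref{3stepdist} directly; the content is identical.
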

\begin{proof}
Let $p\in M$. We can write
\begin{eqnarray*}
    &&\int_M k(d(p,q)/\epsilon)(f(q)-f(p)) \bar V(\dd q)
    =  \epsilon^n \int_{T_pM} (f(p(\epsilon,\eta))-f(p)) (\mu^k+\mu^k_R)(\dd \eta)\\
    &=&\epsilon^n \int_{T_pM} (f(p(\epsilon,\eta))-f(p)) \mu^k(\dd \eta)+ \epsilon^n \int_{T_pM} (f(p(\epsilon,\eta))-f(p))^2 \mu_R^k(\dd \eta).
\end{eqnarray*}
From the results in section~\ref{1generators} and~\ref{3stepdist} (prop~\ref{sumprop}) and lemma~\ref{mucan}, we see for the first term uniformly in $p$
\begin{eqnarray*}
    \lim_{\epsilon \downarrow 0} \frac{1}{\epsilon^{2+n}} \epsilon^n \int_{T_pM} (f(p(\epsilon,\eta))-f(p)) \mu^k(\dd \eta) &=& \lim_{\epsilon \downarrow 0}\frac{1}{\epsilon^{2}} \int_{T_pM} (f(p(\epsilon,\eta))-f(p)) \mu^k(\dd \eta)\\
    &=&\frac{1}{n} \frac{2\pi^{n/2}}{V(M)\Gamma(n/2)}\int_0^\infty k(r)r^{n+1}\dd r\cdot \frac{1}{2} \Delta_M f(p)=C \Delta_M f(p).
\end{eqnarray*}
Now it suffices to show that the second term goes to zero at a rate independent of $p$. Let $\epsilon'',K>0$ such that $\epsilon''<\epsilon'$ and $|h(s)|<K||s||^2$ for $s\in B_{\R^n}(0,\epsilon'')$ (where both $\epsilon'$ and $h$ are from lemma~\ref{measure}). We need remark~\ref{gap} to make sure that $K$ and $\epsilon''$ do not depend on $p$. Now note that for $\epsilon<\epsilon''$:
\begin{equation*}
    |\mu_R|\leq \left(\sup_{t\in B_{\R^n}(0,1)}|h(\epsilon t)|\right)\mu \leq \left(\sup_{t\in B_{\R^n}(0,1)} K||\epsilon t||^2\right)\mu = \left(\sup_{t\in B_{\R^n}(0,1)} K\epsilon^2 ||t||^2\right)\mu = K\epsilon^2\mu.
\end{equation*}
Now we see:
\begin{eqnarray*}
    &&\lim_{\epsilon \downarrow 0} \frac{1}{\epsilon^{2+n}} \epsilon^n \left|\int_{T_pM} f(p(\epsilon,\eta))-f(p) \mu^k_R(\dd \eta)\right| \leq \lim_{\epsilon \downarrow 0} \frac{1}{\epsilon^{2}} \int_{T_pM} \left|f(p(\epsilon,\eta))-f(p)\right| k(||\eta||) |\mu_R|(\dd \eta)\\
    &\leq& \lim_{\epsilon \downarrow 0} \frac{1}{\epsilon^{2}} \int_{T_pM} d(p(\epsilon,\eta),p)L_f k(||\eta||) K\epsilon^2 \mu(\dd \eta)
    \leq L_f K \lim_{\epsilon \downarrow 0} \int_{T_pM} \epsilon ||\eta|| k(||\eta||) \mu(\dd \eta) \\
    &=& L_f K  \int_{T_pM} ||\eta|| k(||\eta||) \mu(\dd \eta)\lim_{\epsilon \downarrow 0}\epsilon = 0,
\end{eqnarray*}
where we used that the integral is finite since $k$ is bounded and has support in $[0,\alpha]$. Combining everything above gives what we wanted.
\end{proof}
\subsection{Example grid}\label{randomGrid}
So far, we have seen that a sequence of grids is suitable for the hydrodynamic limit problem if the empirical distributions converge to the uniform distribution in the Kantorovich topology. We conclude by giving examples of such grids. To be more precise, we show that if one constructs a grid by adding uniformly sampled points from the manifold, this grid is suitable with probability 1.\\
\\
\begin{rmk}[Comparison with standard grids] Recall the grids $S^N$ on the one-dimensional torus $S$ from remark~\ref{comprmk}. We can show that the empirical measures corresponding to these grids along the subsequence $N=2^m, m=0,1,2,..$ converge to the uniform measure on $S$ with respect to the Kantorovich distance. To this end let $N=2^m$ be fixed, call the corresponding empirical measure $\mu^N$ and call the uniform measure $\lambda$. Recall that the Kantorovich distance between these measures is alternatively given by
\begin{equation*}
    W_1(\mu^N,\lambda)=\inf_{\gamma\in \Gamma(\mu^N,\lambda)} \int_{S\times S} d(x,y) \gamma(dx,dy),
\end{equation*}
where $\Gamma(\mu^N,\lambda)$ is the set of all couplings of $\mu^N$ and $\lambda$. Now let $Y$ be a uniform random variable on $S$ and define
\begin{equation*}
    X=k/N \iff Y\in \left[\frac{k-1/2}{N},\frac{k+1/2}{N}\right).
\end{equation*}
Denote the joint distribution of $(X,Y)$ by $\nu$. Then it is easy to see that $\nu\in\Gamma(\mu^N,\lambda)$. This implies that
\begin{equation*}
    W_1(\mu^N,\lambda)\leq \int_{S\times S} d(x,y) \nu(dx,dy) = \E_\nu (d(X,Y)) \leq \frac{1}{2N}.
\end{equation*}
This implies convergence with respect to the Kantorovich metric along the subsequence $N=2^m, m=0,1,2,..$. Note, however, that the corresponding edge weights as described in this section are \textit{not} the same as those in remark~\ref{comprmk}.
\end{rmk}
\textbf{Convergence of a random grid}\\
Now we move back to the general case of a compact and connected $n$-dimensional Riemannian manifold $M$.
Let $(P_n)_{n=1}^\infty$ be a sequence of iid uniformly random points of $M$. Define $ \mu^N=\frac{1}{N}\sum_{i=1}^N \delta_{P_i}$. We follow~\cite[Example 5.15]{handel2016prob} to show that $W_1(\mu^N,\bar V)\rightarrow 0$ as $N\rightarrow\infty$. 
First we will show that the expectation goes to $0$, then we will derive that it goes to $0$ almost surely.\\
For now, let $N$ be fixed. Let $\mathscr{F}_1$ be the set of Lipschitz function on $M$ with Lipschitz constant $\leq 1$. Then we define for $f\in \mathscr{F}_1$ the random variable $X_f=\mu^Nf-\bar V f$. Note that both $\mu^N$ and $\bar V$ are probability distributions, so $X_f(\omega)$ is Lipschitz in $f$ for each $\omega$:
\begin{equation*}
    |X_f-X_g|=|\mu^Nf-\bar V f-(\mu^Ng-\bar V g)|\leq |\mu^N(f-g)|+|\bar V(f-g)|\leq 2||f-g||_\infty.
\end{equation*}
Now note that since $f$ has Lipschitz constant $\leq 1$:
\begin{equation*}
    \sup_{p\in M}f(p)-\inf_{q\in M}f(q) = \sup_{p,q\in M} |f(p)-f(q)|\leq \sup_{p,q\in M}d(p,q) =: K.
\end{equation*}
$M$ is compact, so $K<\infty$. Since adding constants to $f$ does not change $X_f$, it suffices to consider $f\in \mathscr{F}_{1,K}=\{g\in \mathscr{F}_1: 0\leq g \leq K\}$. It follows that for each $f\in \mathscr{F}_{1,K}$ by writing
\begin{equation*}
    X_f = \sum_{i=1}^N \frac{f(X_i)-\bar V f}{n},
\end{equation*}
we see that it is a sum of iid random variables taking values in $[-\frac{K}{N},\frac{K}{N}]$. By the Azuma-Hoeffding inequality, this implies that $X_f$ is $\frac{K^2}{N}$-subgaussian for each $f\in \mathscr{F}_{1,K}$. Now~\cite[Lemma 5.7]{handel2016prob} shows that
\begin{equation*}
    \E [W_1(\mu^N,\bar V)]\leq \inf_{\epsilon>0}\left\{ 2\epsilon+\sqrt{\frac{2K^2}{N} \log N(W,||\cdot||_\infty,\epsilon)}\right\},
\end{equation*}
where $N(\mathscr{F}_{1,K},||\cdot||_\infty,\epsilon)$ is the minimal number of points in some space containing $\mathscr{F}_{1,K}$ such that the balls of radius $\epsilon$ with respect to the uniform distance around those points cover $\mathscr{F}_{1,K}$. \\
\\
\textbf{Estimating the covering number $N(\mathscr{F}_{1,K},||\cdot||_\infty,\epsilon)$}\\
We now need to estimate this covering number. To do this we need an upper bound of the covering number $N(M,d,\epsilon)$ of $M$. Since $M$ is compact there exist $a,\delta>0$ such that for all $0<\epsilon<\delta$: $N(M,d,\epsilon)\leq a\epsilon^{-d}$ (see for instance~\cite[Lemma 4.2]{loubes2008kernel}). Using this we can prove the following.
\begin{lemma}
There is a $c>0$ such that for all $0<\epsilon<\delta$: $N(\mathscr{F}_{1,K},||\cdot||_\infty,\epsilon)\leq \exp{c/\epsilon^d}.$
\end{lemma}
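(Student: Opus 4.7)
The plan is to build an explicit $\epsilon$-net inside $\mathscr{F}_{1,K}$ by discretising Lipschitz functions via their values on a fixed covering of $M$, but in such a way that the Lipschitz constraint drastically reduces the combinatorial count. A naive discretisation of $f(x_1),\dots,f(x_m)$ on an $\epsilon$-net of $M$ would only yield $(K/\epsilon)^m$ possibilities, giving a log-covering number of order $\epsilon^{-d}\log(1/\epsilon)$, which is slightly too large. The gain comes from using a spanning tree on the net so that the value at each non-root vertex is constrained to an interval of length $O(\epsilon)$ once the parent's value is known.

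First I would fix an $\epsilon$-net $\{x_1,\dots,x_m\}\subset M$ with $m\leq a\epsilon^{-d}$ (using the bound $N(M,d,\epsilon)\leq a\epsilon^{-d}$ mentioned just before the lemma). Since $M$ is connected and compact, a short argument (approximating a minimising path between two net points by net points along it) shows that the graph on $\{x_1,\dots,x_m\}$ with edges between vertices at Riemannian distance $\leq C\epsilon$ is connected for some fixed $C>0$ (say $C=3$). Pick any spanning tree $T$ of this graph, rooted at $x_1$, and let $p(i)$ denote the parent of $x_i$. For a discretisation parameter $\epsilon$, let $D=\{k\epsilon:0\leq k\leq \lceil K/\epsilon\rceil\}$ be the grid of rounding targets in $[0,K]$.

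Next I would associate to every $f\in\mathscr{F}_{1,K}$ its rounded vector $\tilde v(f)=(\tilde v_1,\dots,\tilde v_m)\in D^m$, where $\tilde v_i$ is the nearest element of $D$ to $f(x_i)$. The key point is the following count of admissible vectors: once $\tilde v_1$ is fixed ($\leq K/\epsilon+1$ choices), the Lipschitz condition forces $|f(x_i)-f(x_{p(i)})|\leq d(x_i,x_{p(i)})\leq C\epsilon$, so $\tilde v_i$ lies in an interval of length at most $(C+2)\epsilon$ around $\tilde v_{p(i)}$ and has at most a constant number $C'$ of admissible values in $D$. Iterating along the tree, the number of admissible rounded vectors is at most $(K/\epsilon+1)\cdot (C')^{m-1}$. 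For each admissible rounded vector $\tilde v$ that is actually attained, choose one representative $g_{\tilde v}\in\mathscr{F}_{1,K}$; this produces a family of cardinality at most $(K/\epsilon+1)(C')^{m-1}$.

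Finally I would check that this family is a $4\epsilon$-net of $\mathscr{F}_{1,K}$ in $\|\cdot\|_\infty$. Given $f\in\mathscr{F}_{1,K}$ with rounded vector $\tilde v$, both $f(x_i)$ and $g_{\tilde v}(x_i)$ lie within $\epsilon$ of $\tilde v_i$, so they differ by at most $2\epsilon$. For arbitrary $p\in M$ pick $x_i$ with $d(p,x_i)\leq\epsilon$ and use the triangle inequality together with the Lipschitz constants of $f$ and $g_{\tilde v}$ to obtain $|f(p)-g_{\tilde v}(p)|\leq 4\epsilon$. Taking logarithms of the size bound gives
\begin{equation*}
\log N(\mathscr{F}_{1,K},\|\cdot\|_\infty,4\epsilon)\leq \log(K/\epsilon+1)+(m-1)\log C'\leq \log(K/\epsilon+1)+a\epsilon^{-d}\log C',
\end{equation*}
and absorbing the logarithmic term into $\epsilon^{-d}$ for $\epsilon$ small yields the stated bound $N(\mathscr{F}_{1,K},\|\cdot\|_\infty,\epsilon)\leq \exp(c/\epsilon^d)$ after rescaling $\epsilon\mapsto\epsilon/4$ and adjusting $c$. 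The main obstacle is the combinatorial tightening via the spanning tree: without it one loses a $\log(1/\epsilon)$ factor and the desired bound fails. The construction of the connecting graph (which requires the compactness-and-connectedness of $M$ to control edge lengths uniformly) is the geometric ingredient that makes this work.
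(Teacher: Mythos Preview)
Your proposal is correct and follows essentially the same strategy as the paper: discretise the values of a Lipschitz function on an $\epsilon$-net of $M$, then exploit the Lipschitz constraint along a connected graph on the net points so that each vertex has only a bounded number of admissible rounded values given its neighbour, yielding the $3^{m}$-type count. The only cosmetic differences are that the paper uses BFS layers rather than a spanning tree for the counting, and builds the net out of explicit piecewise-constant approximants $\pi^f$ (lying outside $\mathscr{F}_{1,K}$) rather than choosing representatives inside $\mathscr{F}_{1,K}$; your version also makes the connectivity of the auxiliary graph explicit, which the paper leaves implicit.
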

\begin{proof}
Fix $\epsilon>0$ and call $m=N(M,d,\epsilon/4)$. By definition of this number, we can find points $p_1,..,p_m\in M$ such that $\cup_{i=1}^m B(p_i,\epsilon/4)\supset M$. Now define $V_1=B(p_1,\epsilon/4)$ and for $i\geq 2$: $V_i=B(p_i,\epsilon/4)\setminus \cup_{j=1}^{i-1}V_j$. Now for $f\in \mathscr{F}_{1,K}$, define $\pi^f:M\rightarrow\R$ by 
\begin{equation*}
    \pi^f: V_i\ni p\mapsto \epsilon \left(\left\lfloor \frac{f(p_i)}{\epsilon}\right\rfloor+\frac{1}{2}\right).
\end{equation*}
Since each $p\in M$ is contained in exactly one $V_i$ (by construction), this map is well-defined. Note that if $k\epsilon\leq f(p_i)<(k+1)\epsilon$, then $\pi^f=(k+1/2)\epsilon$ on $V_i$. In particular clearly $|f(p_i)-\pi^f(p_i)|\leq \epsilon/2$. Now denote $Y=\{\pi^f|f\in \mathscr{F}_{1,K}\}$.\\
Now fix $f\in \mathscr{F}_{1,K}$ and $p\in M$. Let $i$ be such that $p\in V_i$. Then we see:
\begin{equation*}
    |\pi^f(p)-f(p)|= |\pi^f(p_i)-f(p)|\leq |\pi^f(p_i)-f(p_i)|+|f(p_i)-f(p)|\leq \epsilon/2 + L_fd(p_i,p) \leq \epsilon/2+\epsilon/4< \epsilon.
\end{equation*}
This shows that $||\pi^f-f||_\infty\leq \epsilon$, which implies that $Y$ is an $\epsilon$-net for $\mathscr{F}_{1,K}$. Hence $N(\mathscr{F}_{1,K},||\cdot||_\infty,\epsilon)\leq \#Y$.\\
\\
All we have to do now is estimate $\#Y$. \\
First of all let $\pi^f\in Y$. Note that if $d(p_i,p_j)\leq \epsilon/2$, we see
\begin{eqnarray*}
    |\pi^f(p_i)-\pi^f(p_j)|&\leq& |\pi^f(p_i)-f(p_i)|+|f(p_i)-f(p_j)|+|f(p_j)-\pi^f(p_j)|\\
    &\leq& \epsilon/2 + L_fd(p_i,p_j)+\epsilon/2=3\epsilon/2.
\end{eqnarray*}
Since $|\pi^f(p_i)-\pi^f(p_j)|=k\epsilon$ for some $k\in\Z$, we conclude $|\pi^f(p_i)-\pi^f(p_j)|\in\{-\epsilon,0,\epsilon\}$, so $\pi^f(p_i)\in\{\pi^f(p_j)-\epsilon,\pi^f(p_j),\pi^f(p_j)+\epsilon\}.$\\
\\
Now define a graph $G$ with vertices $p_1,..,p_m$ by putting an edge between $p_i$ and $p_j$ whenever $d(p_i,p_j)\leq\epsilon/2$. Any $\pi^f$ is uniquely specified by its values on the nodes of $G$. Note further that whenever we know $\pi^f$ for some point of the graph, there are only 3 possible values left for each of its neighbours (since neighbours are at distance at most $\epsilon/2$). Now $\#Y$ is dominated by the amount of ways in which we can assign values of the type $(k+1/2)\epsilon$ to nodes of $G$ while keeping this restriction into account. Define, for $i\leq 0$, $S_i=\{p\in G: d_G(p_1,p)=i\}$, where $d_G(p,q)$ denotes the minimum amount of edges that need to be followed to walk from $p$ to $q$ in $G$. Now we can start counting.\\
For $p_1$, there are at most $\left\lceil K/\epsilon \right\rceil$ possible values (recall that any $f\in \mathscr{F}_{1,K}$ has $0\leq f\leq K$). Each node in $S_1$ is a distance at most $\epsilon/2$ from $p_1$, so each node can take at most $3$ values. This brings the possible amount of value assignments to (less than) $\left\lceil K/\epsilon \right\rceil 3^{\#S_1}$. Now each node in $S_2$ is at distance at most $\epsilon/2$ of a node in $S_1$, so each of these can take at most 3 different values. This brings the number of options so far to at most $\left\lceil K/\epsilon \right\rceil 3^{\#S_1}3^{\#S_2}$. Continuing in this way, we obtain that the number of ways to assign values is at most
\begin{equation*}
    \left\lceil \frac{K}{\epsilon} \right\rceil \prod_{i=1}^{\infty}3^{\#S_i} = \left\lceil \frac{K}{\epsilon} \right\rceil 3^{\sum_{i=1}^{\infty}\#S_i} = \left\lceil \frac{K}{\epsilon} \right\rceil 3^{m-1} = \left\lceil \frac{K}{\epsilon} \right\rceil 3^{N(M,d,\epsilon/4)-1}.
\end{equation*}
Recall that $m$ is the total amount of balls as we defined at the beginning of the proof, which we chose equal to $N(M,d,\epsilon/4)$.
Now we know that for $0<\epsilon<\delta$
\begin{equation*}
    N(\mathscr{F}_{1,K},||\cdot||_\infty,\epsilon)\leq \left\lceil \frac{K}{\epsilon} \right\rceil 3^{a/(\epsilon/4)^d-1} = \left\lceil \frac{K}{\epsilon} \right\rceil 3^{a4^d/\epsilon^d-1}.
\end{equation*}
This implies that there exists $c>0$ such that for all $0<\epsilon<\delta$  $N(\mathscr{F}_{1,K},||\cdot||_\infty,\epsilon)\leq \e^{c/\epsilon^d}.$
\end{proof}

Now we see that for any $0<\epsilon<\delta:$ 
\begin{equation*}
    \E [W_1(\mu^N,\bar V)]\leq 2\epsilon + \sqrt{\frac{2K^2}{N} \log \exp{c/\epsilon^d}} = 2\epsilon+\sqrt{\frac{2cK^2}{N}}\epsilon^{-d/2}.
\end{equation*}
Elementary methods show that this value takes a minimum at $\epsilon=c_0N^{\frac{-1}{d+2}}$ where $c_0$ is some constant (take $N$ large enough such that $c_0N^{\frac{-1}{d+2}}<\delta$). This shows that the optimal bound that we get is
\begin{equation*}
    2c_0N^{\frac{-1}{d+2}}+\sqrt{\frac{2cK^2}{N}}\left(c_0N^{\frac{-1}{d+2}}\right)^{-d/2}
    = 2c_0N^{\frac{-1}{d+2}} +c_1 N^{\frac{-1}{d+2}}
\end{equation*}
where $c_1$ is the product of some constants that don't depend on $N$. This shows that
\begin{equation*}
    \E [W_1(\mu^N,\bar V)]\leq (2c_0 +c_1) N^{\frac{-1}{d+2}} \rightarrow 0
\end{equation*}
as $n\rightarrow\infty$.\\
\\
\textbf{Convergence a.s.}\\
It remains to show that $W_1(\mu^N,\bar V)$ goes to zero almost surely. For a function $f:M^N\rightarrow \R$ define
\begin{equation*}
    D_if(p_1,..,p_N)=\sup_{z\in M} f(p_1,..,p_{i-1},z,p_{i+1},..,p_N) - \inf_{z\in M} f(p_1,..,p_{i-1},z,p_{i+1},..,p_N).
\end{equation*}
Further, define the function $H:M^N\rightarrow\R$ by
\begin{equation*}
    (p_1,..,p_N)\mapsto \sup_{g\in \mathscr{F}_1} \left\{\frac{1}{N}\sum_{i=1}^N g(p_i) - \int_M g\dd\bar V\right\}.
\end{equation*}
Note that $H(p_1,..,p_N)=W_1(\mu^N,\bar V)$. 
\begin{lemma}\label{subgaussian}
Set (as before) $K=\sup_{p,q\in M}d(p,q)$. Then for each $1\leq j\leq N$: $||D_jH||_\infty\leq K/N$.
\end{lemma}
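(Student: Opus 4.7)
The plan is to bound $D_jH$ directly by exploiting the Lipschitz property of the admissible test functions. Fix an index $j$ and fix arbitrary points $p_1,\dots,p_{j-1},p_{j+1},\dots,p_N \in M$. For any $z \in M$, write
\begin{equation*}
    H(z) := H(p_1,\dots,p_{j-1},z,p_{j+1},\dots,p_N) = \sup_{g\in\mathscr{F}_1}\left\{F(g) + \frac{g(z)}{N}\right\},
\end{equation*}
where $F(g) := \frac{1}{N}\sum_{i\neq j} g(p_i) - \int_M g\, \dd\bar V$ does not depend on $z$. We want to show $\sup_{z\in M} H(z) - \inf_{z\in M} H(z) \le K/N$.

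For any two points $z,z' \in M$ and any $g \in \mathscr{F}_1$, the Lipschitz property of $g$ gives $g(z) - g(z') \le d(z,z') \le K$, hence
\begin{equation*}
    F(g) + \frac{g(z)}{N} \le F(g) + \frac{g(z')}{N} + \frac{K}{N}.
\end{equation*}
Taking the supremum over $g\in\mathscr{F}_1$ on both sides yields $H(z) \le H(z') + K/N$. Swapping the roles of $z$ and $z'$ gives the reverse inequality, so $|H(z) - H(z')| \le K/N$ for all $z,z' \in M$. Taking the supremum in $z$ and infimum in $z'$ produces $D_jH(p_1,\dots,p_N) \le K/N$, and since the bound is independent of the fixed coordinates, $\|D_jH\|_\infty \le K/N$.

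I do not anticipate any real obstacle here: the argument is essentially the standard bounded-differences calculation for empirical measures in Kantorovich distance. The only subtle point is the use of $\sup_g(A+B) \le \sup_g A + \sup_g B$ in the correct direction, which is handled cleanly by fixing $g$ first, applying the Lipschitz bound, and only afterwards taking suprema. This lemma is presumably intended as the ingredient for a McDiarmid/bounded-differences concentration argument yielding almost sure convergence $W_1(\mu^N,\bar V) \to 0$ via Borel--Cantelli.
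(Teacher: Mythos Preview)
Your proof is correct and follows essentially the same approach as the paper: both isolate the $j$th coordinate, write $H$ as a supremum over $g\in\mathscr{F}_1$ of a quantity that depends on $z$ only through $g(z)/N$, apply the Lipschitz bound $|g(z)-g(z')|\leq d(z,z')\leq K$ for fixed $g$, and then pass to the supremum over $g$ to obtain $|H(z)-H(z')|\leq K/N$. Your $F(g)+g(z)/N$ is precisely the paper's $J^j(g,z)$, and your remark about the context (bounded differences feeding into McDiarmid and Borel--Cantelli) matches exactly how the lemma is used.
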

\begin{proof}
Let $1\leq j\leq N$ and fix $p_1,..,p_N$. Denote for $p\in M$ and $g\in \mathscr{F}_1$
\begin{equation*}
    J^j(g,p)=\frac{1}{N}\left(\sum_{i=1,i\neq j}^N g(p_i)+g(p)\right)-\int_Mg\dd\bar V
\end{equation*}Now let $p,q\in M$. Then for any $g\in \mathscr{F}_1$:
\begin{equation*}
    |J^j(g,p)-J^j(g,q)| = \frac{1}{N} |g(p)-g(q)| \leq \frac{1}{N} d(p,q) \leq \frac{K}{N}.
\end{equation*}
This shows that $g\mapsto J^j(g,p)$ and $g\mapsto J^j(g,q)$ are always at most $K/N$ apart from each other, which implies that
\begin{equation*}
    \left|\sup_{g\in \mathscr{F}_1}J^j(g,p)-\sup_{g\in \mathscr{F}_1}J^j(g,q)\right|\leq \frac{K}{N}.
\end{equation*}
Now 
\begin{eqnarray*}
    D_iH(p_1,..,p_N) &=& \sup_{p\in M} H(p_1,..,p_{i-1},p,p_{i+1},..,p_N) - \inf_{q\in M} H(p_1,..,p_{i-1},q,p_{i+1},..,p_N)\\
    &=& \sup_{p,q\in M} | H(p_1,..,p_{i-1},p,p_{i+1},..,p_N) - H(p_1,..,p_{i-1},q,p_{i+1},..,p_N)|\\
    &=& \sup_{p,q\in M} \left|\sup_{g\in \mathscr{F}_1}J^j(g,p)-\sup_{g\in \mathscr{F}_1}J^j(g,q)\right|\leq \frac{K}{N}.
\end{eqnarray*}
Since $P_1,..,P_N$ were arbitrary, we conclude that $||D_jH||_\infty\leq \frac{K}{N}$.
\end{proof}
Now we are in position to prove the main result.
\begin{prop}
$W_1(\mu^N,\bar V)\rightarrow 0$ almost surely as $N\rightarrow\infty$.
\end{prop}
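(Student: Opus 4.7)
The plan is to combine the bound on the expectation $\E[W_1(\mu^N,\bar V)] \to 0$ that has already been established with a concentration inequality for $H(P_1,\ldots,P_N) = W_1(\mu^N,\bar V)$ around its mean, obtained via the bounded-differences control in Lemma~\ref{subgaussian}. This two-step pattern (mean $\to 0$ plus exponential concentration around the mean) is a standard way to upgrade convergence in expectation to almost sure convergence on product probability spaces.

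\textbf{Step 1: concentration.} Since $P_1,\ldots,P_N$ are independent and Lemma~\ref{subgaussian} gives $\|D_j H\|_\infty \leq K/N$ for every $j$, the McDiarmid bounded-differences inequality (equivalently, Azuma--Hoeffding applied to the Doob martingale of $H$ with respect to the filtration generated by $P_1,\ldots,P_N$) yields, for every $t>0$,
\begin{equation*}
    \p\bigl(|H(P_1,\ldots,P_N) - \E H(P_1,\ldots,P_N)| \geq t\bigr) \;\leq\; 2\exp\!\left(-\frac{2t^2}{\sum_{j=1}^N (K/N)^2}\right) \;=\; 2\exp\!\left(-\frac{2Nt^2}{K^2}\right).
\end{equation*}

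\textbf{Step 2: Borel--Cantelli.} Fix $\epsilon > 0$. By the previous estimate,
\begin{equation*}
    \sum_{N=1}^\infty \p\bigl(|W_1(\mu^N,\bar V) - \E W_1(\mu^N,\bar V)| \geq \epsilon\bigr) \;\leq\; \sum_{N=1}^\infty 2\exp(-2N\epsilon^2/K^2) \;<\; \infty.
\end{equation*}
The Borel--Cantelli lemma then gives $|W_1(\mu^N,\bar V) - \E W_1(\mu^N,\bar V)| \to 0$ almost surely. Since we have already shown that $\E[W_1(\mu^N,\bar V)] \leq (2c_0+c_1)N^{-1/(d+2)} \to 0$, the triangle inequality gives $W_1(\mu^N,\bar V) \to 0$ almost surely.

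\textbf{Main obstacle.} There is essentially no further obstacle: all the hard work has already been done in the preceding pages (the covering-number estimate controlling $\E H$, and the bounded-differences estimate of Lemma~\ref{subgaussian}). The only thing to be careful about is invoking McDiarmid in the correct form on the product space $M^N$, and ensuring the null sets on which convergence may fail do not depend on $\epsilon$; this is handled routinely by applying Borel--Cantelli along a countable sequence $\epsilon = 1/k$ and intersecting the resulting full-measure events.
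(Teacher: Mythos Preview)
Your proof is correct and follows essentially the same approach as the paper: apply McDiarmid's bounded-differences inequality using Lemma~\ref{subgaussian}, obtain the exponential concentration bound $2\exp(-2Nt^2/K^2)$, and then invoke Borel--Cantelli together with the already-established convergence $\E W_1(\mu^N,\bar V)\to 0$. The paper's version is slightly terser (it cites the concentration result as \cite[Theorem 3.11]{handel2016prob} and leaves the Borel--Cantelli step as ``standard''), but the structure is identical.
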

\begin{proof}
Since $P_1,..,P_N$ are independent, \cite[Theorem 3.11]{handel2016prob} gives us that for any $t>0$
\begin{eqnarray*}
    \p(W_1(\mu^N,\bar V)-\E W_1(\mu^N,\bar V)>t) &=& \p\left(H(P_1,..,P_N)-\E H(P_1,..,P_N)>t\right)\\
    &\leq& \exp\left( \frac{-2t^2}{\sum_{k=1}^N||D_kH||_\infty^2}\right) \leq \exp\left(\frac{-2t^2N}{K^2}\right),
\end{eqnarray*}
where the last inequality follows from lemma~\ref{subgaussian}. For reasons of symmetry we obtain
\begin{equation*}
    \p(\left|W_1(\mu^N,\bar V)-\E W_1(\mu^N,\bar V)\right|>t) \leq 2 \exp\left(\frac{-2t^2N}{K^2}\right).
\end{equation*}
By a standard application of the Borel-Cantelli lemma, this implies that $W_1(\mu^N,\bar V)-\E W_1(\mu^N,\bar V)\rightarrow 0$ a.s. Since we have already seen that $\E W_1(\mu^N,\bar V)\rightarrow 0$, we conclude that a.s. as $N\rightarrow\infty$
\begin{equation*}
    W_1(\mu^N,\bar V)\rightarrow 0.
\end{equation*}
\end{proof}

We conclude that sampling uniformly from the manifold yields a suitable grid with probability $1$. 
\section{Hydrodynamic limit of the SEP}\label{part1}

In section~\ref{part2} we showed the existence of uniformly approximating grids. In this section we will apply such grids. We will use it to define an interacting particle system on the manifold. Then we will show that this interacting particle system has a hydrodynamic limit and that this limit satisfies the heat equation (the precise formulation is given in theorem~\ref{mainthm}). We follow a standard method that is used in \cite[Chapter 8]{seppalainen2008translation} for the Euclidean case.\\
Now let $(G_N,W_N)_{N=1}^\infty$ be a sequence of uniformly approximating grids with corresponding weights. Recall that this means the following. There is a sequence $(p_n)_{n=1}^\infty$ in $M$ such that $G^N=\{p_1,..,p_N\}$. On each $G^N$, there is a random walk $X^N$ which jumps from $p_i$ to $p_j$ with (symmetric) rate $W^N_{ij}$. We assume that there exists some function $a:\N\rightarrow[0,\infty)$ and some constant $C>0$ such that for each smooth $\phi$
\begin{equation*}
    a(N)\sum_{j=1}^NW^N_{ij}(\phi(p_j)-\phi(p_i))\longrightarrow C\Delta_M\phi(p_i)\quad(N\rightarrow\infty)
\end{equation*}
where the convergence is in the sense that for all smooth $\phi$
\begin{equation}\label{suitableGridAssumption}
    \lim_{N\rightarrow\infty} \frac{1}{N}\sum_{i=1}^N \left|a(N)\sum_{j=1}^NW^N_{ij}(\phi(p_j)-\phi(p_i))- C\Delta_M\phi(p_i)\right|=  0.\footnote{Recall from remark~\ref{sgconv} that if we replace the average in this expression by a supremum, this condition implies convergence of the corresponding semigroups.}
\end{equation}
By dividing $a(N)$ by $C$ if necessary, we can assume that $C=1$.

\begin{rmk}
Note that for the result of this section it is not necessary to construct grids from a sequence. Any sequence of finite grids such that~(\ref{suitableGridAssumption}) holds would do. However, since the grid that we constructed in section~\ref{part2} is of this form and this section partially serves as an example of the application of that grid, we formulate our results in this section in the same way.
\end{rmk}

\subsection{Symmetric Exclusion Process}\label{SEP}
The Symmetric Exclusion Process (SEP) is an interacting particle system that was introduced in~\cite{spitzer1970interaction} and studied in detail in~\cite[Chapter 8]{liggett2012interacting}. The idea is that there is some (possibly countably infinite) amount of particles on a (possibly countably infinite) graph $G$. The particles are considered identical. Each particle jumps after independent exponential times with parameter $1$ from $x$ to $y$ with probability $p(x,y)$, provided that the place that it wants to jump to is not already occupied. Otherwise, the jump is suppressed. We assume that $p(x,y)=p(y,x)$. Let $\eta_t\in \{0,1\}^G$ denote the configuration of the particles at time $t$, i.e. $\eta_t(x)=1$ if there is a particle at place $x\in G$ at time $t$ and $0$ else. We will sometimes write $\eta(p,t)=\eta_t(p)$. For any configuration $\eta$ and points $x,y$ define $\eta^{xy}$ by
\begin{equation*}
    \eta^{xy}(z)=\begin{cases} \eta(x) & \text{ if } z=y\\
    \eta(y) & \text{ if } z=x\\
    \eta(z) & \text{ if } z\neq x,y
    \end{cases}
\end{equation*}
An equivalent description of this process is the following. All edges $(xy)$ have independent exponential clocks with rate $p(x,y)=p(y,x)$. Whenever a clock rings, the particles that are at either side of the corresponding edge jump along the edge. This means that if there are no particles, nothing happens. If there is one particle, it jumps. If there are two particle, they switch places. Since we are not interested in individual particles, the configuration stays the same in the latter case. Note that in this way there can never be more than two particles at the same place. Using the notation introduced above, we see that the generator of this process is defined on the core of local functions as
\begin{equation*}
    Lf(\eta)=\frac{1}{2}\sum_{x,y} p(x,y) (f(\eta^{xy})-f(\eta)).
\end{equation*}
The factor $\frac{1}{2}$ is there since we count every edge twice.\\
\\
\textbf{The process}\\
We now define the SEP $\eta^{N}=(\eta^{N}_t)_{t\geq0}$ on $G^N$ through the generator
\begin{equation*}
    L^{N}h(\eta)=\frac{a(N)}{2}\sum_{i,j=1}^NW_{ij}^N (h(\eta^{ij})-h(\eta)), \quad h: \{0,1\}^{G^N}\rightarrow\R.
\end{equation*}
Here $\eta^{ij}:=\eta^{p_ip_j}$. It follows from our considerations above that this process describes particles that perform independent random walks according to $X^N$ with the restriction that jumps to occupied sites are suppressed.\\
Let $(X_i)_{i=1}^\infty$ be some sequence of (possibly degenerate) random variables taking values in $\{0,1\}$. Set as the initial configuration $\eta^{N}_0(p_i)=X_i$.\\
\\
\subsection{Hydrodynamic limit}\label{hydlim}
We will use this subsection to give the basic definitions that describe the idea of a hydrodynamic limit. At a microscopic scale, the particles are just random walkers with some interaction, but at the macroscopic scale (where limits are taken in space and time), the behaviour is deterministic: it is described by a partial differential equation (in our case the heat equation). \\
\\
\textbf{Path space}\\
Now write $R(M)$ for the space of Radon measures on $M$ with the vague topology and let $D=D([0,\infty),R(M))$ denote the space of all paths $\gamma:[0,\infty)\rightarrow R(M)$ such that $\gamma$ is right continuous and has left limits. On this space we can define the Skohorod metric (see for instance~\cite[Appendix A.2.2]{seppalainen2008translation}). Since $R(M)$ is a Polish space, it can be shown that $D$ with the Skohorod metric is a Polish space too. \\
\\
\textbf{Initial conditions and trajectories of particle configurations}\\
Define
\begin{equation*}
    \mu_t^{N}=\frac{1}{N}\sum_{i=1}^N\delta_{p_i}\eta^{N}_{t}(p_i),
\end{equation*}
where $\delta_p$ is the Dirac measure which places mass 1 at $p\in M$. It puts a point mass at each particle and rescales it by the amount of possible positions, which represents the particle configuration $\eta^{N}_t$ at time $t$. In particular $\mu_t^{N}$ is a sub-probability measure and is in $R(M)$. \\
Instead of dealing with this problem pointwise for each $t$, we will look at trajectories. As the particles move according to the SEP, $\gamma^{N}:[0,\infty)\rightarrow R(M)$ defined by $t\mapsto \mu_t^{N}$ is a random trajectory and hence a random element of $D$. It represents the positions of the particles over time. The initial configuration $X_1,..,X_N$ and the dynamics of the SEP determine a distribution $Q^{N}$\label{QNdef} on $D$. In this way we obtain a sequence $(Q^{N})_{N=0}^\infty$ of measures on $D$.\\
\\
\textbf{Assumption on the initial configuration}\\
We assume that there exists a measurable function $\rho_0:M\rightarrow \R$ such that $0\leq \rho_0\leq 1$ and $\mu_0^{N}$ converges vaguely to $\rho_0\dd \bar V$ in probability, i.e. for any continuous $\phi$ as $N\rightarrow\infty$:
\begin{equation}\label{incon}
    \int_M \phi \dd\mu_0^{N} \rightarrow \int_M \rho_0\phi \dd \bar V \hspace{0.3cm} \text{ in probability.}
\end{equation}
If this is the case, we say that $\rho_0\dd V$ is the density profile corresponding to the configurations $\eta_0^{N}$. Note that using measures here to represent the particles provides a bridge between separate particles (discrete measures) and density profiles (measures that are absolutely continuous with respect to $V$). We would like to show that if this initial condition is given, then at any time $t$ the configurations $\eta_t^{N}$ have a corresponding density profile $\rho_t\dd \bar V$. Moreover, we want to show that $t\mapsto\rho_t$ solves the heat equation with initial condition $\rho_0$. \\
\\
\textbf{Example of initial distribution}\\
Suppose for now that the $p_i$'s are such that for any continuous $f$: $\frac{1}{N}\sum_{i=1}^Nf(p_i)\rightarrow \int_Mf\dd \bar V$
\footnote{Since Kantorovich convergence is stronger than convergence in distribution, this is in particular true for the grids that we consider in part 2.}. Define the random variables $(X_i)_{i=1}^\infty$ to be independent Bernoulli random variables with $\E X_i = \rho_0(p_i)$ for some continuous function $\rho_0:M\rightarrow \R$ with $0\leq \rho_0\leq 1$. Then we see as $N\rightarrow\infty$:
\begin{eqnarray*}
    \E\left[\int \phi \dd\mu_0^{N}\right] &=& \E \left[\frac{1}{N} \sum_{i=1}^N \phi(p_i)\eta_0^{N}(p_i)\right] = \frac{1}{N} \sum_{i=1}^N \phi(p_i)\E \eta_0^{N}(p_i)\\
    &=& \frac{1}{N} \sum_{i=1}^N \phi(p_i)\rho_0(p_i)\rightarrow \int \phi\rho_0\dd \bar V, 
\end{eqnarray*}
since $\phi$ and $\rho_0$ are continuous. Further,
\begin{eqnarray*}
    \var \left[\int \phi \dd\mu_0^{N}\right] &=& \var \left[\frac{1}{N} \sum_{i=1}^N \phi(p_i)\eta_0^{N}(p_i)\right] = \frac{1}{N^2} \sum_{i=1}^N \phi(p_i)\var (\eta_0^{N}(p_i)) \\
    &=& \frac{1}{N^2} \sum_{i=1}^N \phi(p_i)\rho_0(p_i)(1-\rho_0(p_i))\rightarrow 0.
\end{eqnarray*}
Together this implies that~(\ref{incon}) holds here for any continuous $\phi$. \\
\\
\textbf{Main result}\\
After all these definitions, we can state the main result of this section.
\begin{thm}\label{mainthm}
Let $M$ be a complete, $n$-dimensional, connected Riemannian manifold and let $(G_N,W_N)_{N=1}^\infty$ be a sequence of uniformly approximating grids with corresponding weights. Let $\eta^N_t$ be particle configurations that behave according to the SEP on $(G_N,W_N)$ and let $\mu^N_t$ be its measure valued representation. Suppose that $\mu_0^N$ has density profile $\rho_0\dd V$ for some measurable function $\rho_0$. Then the trajectory $t\mapsto \mu^N_t$ converges in probability to the trajectory $t\mapsto \rho_t\dd V$ in the Skohorod topology, where $t\mapsto \rho_t$ satisfies the heat equation on $M$ with initial condition $\rho_0$.
\end{thm}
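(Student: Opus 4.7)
The plan is to follow the standard martingale-tightness approach of~\cite[Chapter~8]{seppalainen2008translation}: establish tightness of the laws $(Q^N)$ on $D([0,\infty),R(M))$, identify every subsequential limit with the unique weak solution of the heat equation having initial datum $\rho_0\dd\bar V$, and conclude convergence in probability from the fact that the limit is deterministic.

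\textbf{Drift and martingale decomposition.} The first step is to fix $\phi\in C^\infty(M)$, set $\pi_\phi(\mu):=\langle\phi,\mu\rangle$, and compute the action of $L^N$ on $\pi_\phi(\mu^N)$. Using the symmetry $W^N_{ij}=W^N_{ji}$ to cancel the bilinear terms $\eta(p_i)\eta(p_j)$, one finds
\begin{equation*}
    L^N\pi_\phi(\mu^N)(\eta)=\frac{1}{N}\sum_{i=1}^N\eta(p_i)\cdot a(N)\sum_{j=1}^N W^N_{ij}(\phi(p_j)-\phi(p_i)).
\end{equation*}
This is the decisive identity: because $\pi_\phi$ is a one-body function, the exclusion constraint disappears and only the rescaled graph Laplacian acts on $\phi$. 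Since $|\eta(p_i)|\le 1$, the uniformly approximating grid hypothesis~(\ref{suitableGridAssumption}) immediately gives
\begin{equation*}
    \sup_\eta\bigl|L^N\pi_\phi(\mu^N)(\eta)-\pi_{\Delta_M\phi}(\mu^N)(\eta)\bigr|\le \frac{1}{N}\sum_{i=1}^N\Bigl|a(N)\sum_j W^N_{ij}(\phi(p_j)-\phi(p_i))-\Delta_M\phi(p_i)\Bigr|\longrightarrow 0.
\end{equation*}
Next I would introduce the Dynkin martingale $M^{N,\phi}_t:=\pi_\phi(\mu^N_t)-\pi_\phi(\mu^N_0)-\int_0^t L^N\pi_\phi(\mu^N_s)\,\dd s$ and control its predictable quadratic variation
\begin{equation*}
    \langle M^{N,\phi}\rangle_t=\int_0^t \frac{a(N)}{2N^2}\sum_{i,j} W^N_{ij}(\phi(p_j)-\phi(p_i))^2(\eta^N_s(p_j)-\eta^N_s(p_i))^2\,\dd s.
\end{equation*}
Bounding $(\eta^N_s(p_j)-\eta^N_s(p_i))^2\le 1$ and using the algebraic identity $\sum_{i,j}W^N_{ij}(\phi(p_j)-\phi(p_i))^2=-2\sum_i\phi(p_i)\sum_j W^N_{ij}(\phi(p_j)-\phi(p_i))$ in combination with~(\ref{suitableGridAssumption}), this is controlled by $\frac{2t}{N}\|\phi\|_\infty(\|\Delta_M\phi\|_\infty+o(1))=O(t/N)$, so $M^{N,\phi}_t\to 0$ in $L^2$ uniformly on compact time intervals.

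\textbf{Tightness and identification of the limit.} Since $M$ is compact, the set of sub-probability measures on $M$ is compact in the vague topology, so $(\mu^N_t)_N$ is automatically marginally tight for every $t$. I would then prove tightness of each real-valued process $\pi_\phi(\mu^N_\cdot)$ in $D([0,\infty),\R)$ via the Aldous-Rebolledo criterion: the drift has $\|L^N\pi_\phi(\mu^N)\|_\infty\le\|\Delta_M\phi\|_\infty+o(1)$ (uniform Lipschitz in time), and the martingale satisfies the $O(t/N)$ quadratic variation bound above. Tightness of $(Q^N)$ on $D([0,\infty),R(M))$ then follows on a countable measure-determining family $\phi\in C^\infty(M)$ via Jakubowski's criterion. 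Since the jumps of $\pi_\phi(\mu^N_\cdot)$ have size at most $2\|\phi\|_\infty/N=O(1/N)$, every subsequential limit $Q^\infty$ is supported on $C([0,\infty),R(M))$. Passing to the limit in the martingale decomposition, using the replacement $L^N\pi_\phi\approx\pi_{\Delta_M\phi}$ above and the initial condition~(\ref{incon}), one obtains that $Q^\infty$-a.s.\ the trajectory $(\mu_t)_{t\ge 0}$ satisfies $\mu_0=\rho_0\dd\bar V$ and
\begin{equation*}
    \pi_\phi(\mu_t)=\pi_\phi(\mu_0)+\int_0^t \pi_{\Delta_M\phi}(\mu_s)\,\dd s\qquad \forall\,\phi\in C^\infty(M),\,t\ge 0.
\end{equation*}
This is the weak form of $\partial_t\rho=\Delta_M\rho$; its unique solution with initial datum $\rho_0$ is $\mu_t=(e^{t\Delta_M}\rho_0)\dd\bar V$ (uniqueness by duality, testing against $e^{(t-s)\Delta_M}\phi$, or equivalently by spectral decomposition of $\Delta_M$ on compact $M$). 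Because the limit is deterministic, weak convergence upgrades to convergence in probability.

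\textbf{Main obstacle.} The only manifold-specific ingredient is the replacement $L^N\pi_\phi\leadsto\pi_{\Delta_M\phi}$, which is handed to us by~(\ref{suitableGridAssumption}); everything else is classical. The most delicate bookkeeping will be verifying the Aldous-Rebolledo modulus-of-continuity condition uniformly over stopping times, but this reduces to the sup-norm bound on the drift and the $O(t/N)$ bound on the quadratic variation above, so no genuinely new ideas beyond the Euclidean proof of~\cite{seppalainen2008translation} are needed.
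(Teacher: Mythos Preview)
Your proposal is correct and follows essentially the same approach as the paper: Dynkin martingale decomposition, the same algebraic identity for the carr\'e du champ yielding the $O(t/N)$ quadratic variation bound, Aldous-type tightness, identification of subsequential limits via the weak heat equation, and upgrading to convergence in probability via determinism of the limit. The only noteworthy difference is in the uniqueness step: the paper argues by smoothing the measure-valued trajectory with the heat kernel $p_\epsilon$ (the manifold substitute for convolution) and letting $\epsilon\downarrow 0$, whereas your duality argument---testing against $e^{(t-s)\Delta_M}\phi$ and differentiating in $s$---is shorter and more direct on a compact manifold.
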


\subsection{Convergence result}\label{proof}
\textbf{Dynkin martingale}\\
The proof of the hydrodynamic result follows the line of~\cite[Chapter 8]{seppalainen2008translation}. Its core calculations are based on the following Dynkin martingale result. It is a standard result and it is also proved in~\cite{seppalainen2008translation}. We will formulate it in terms of our situation on a compact Riemannian manifold.
\begin{prop}\label{mgalethm}
Let $\{\eta_t,t\geq 0\}$ be a Feller process on a compact Riemannian manifold with generator $L$ and semigroup $S_t$. For any function $f$ such that both $f$ and $f^2$ are in $D(L)$, define
\begin{equation*}
    M_t=f(\eta_t)-f(\eta_0) - \int_0^t Lf(\eta_s)\dd s.
\end{equation*}
Then $M_t$ is a martingale with respect to the filtration $\mathscr{F}_t=\sigma\{\eta_r,r\leq t\}$. Moreover, its quadratic variation $\left<M,M\right>_t$ equals $\int_0^t\gamma(s)\dd s$, where $\gamma(s)=(L(f^2)-2fLf)(\eta_s)$.
\end{prop}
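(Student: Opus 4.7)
The statement is the classical Dynkin/Revuz--Yor identity for Markov processes, and my plan is to split the proof into the martingale part (which is immediate from the semigroup identity) and the quadratic variation part (which reduces to applying the first part to $f^2$ and carefully expanding a conditional square).

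\textbf{Step 1: $M_t$ is a martingale.} Fix $s<t$. By the Markov property and the definition of the semigroup,
\begin{equation*}
\E[f(\eta_t)\mid\F_s] = S_{t-s}f(\eta_s), \qquad \E\!\left[\int_s^t Lf(\eta_u)\,\dd u \,\Big|\,\F_s\right] = \int_0^{t-s} S_u(Lf)(\eta_s)\,\dd u,
\end{equation*}
where Fubini (justified by $Lf\in C(M)$ and compactness) lets one pull the conditional expectation inside. Since $f\in D(L)$, the Kolmogorov identity $S_{t-s}f(x)-f(x)=\int_0^{t-s}S_u(Lf)(x)\,\dd u$ holds pointwise, so subtracting gives $\E[M_t-M_s\mid\F_s]=0$. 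Integrability of $M_t$ follows from the assumption that $f,Lf\in C(M)$ on a compact manifold, hence bounded.

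\textbf{Step 2: identify the quadratic variation.} Applying Step 1 to $f^2\in D(L)$ gives that
\begin{equation*}
M^{(2)}_t := f^2(\eta_t) - f^2(\eta_0) - \int_0^t L(f^2)(\eta_u)\,\dd u
\end{equation*}
is also a martingale. Set $N_t := M_t^2 - \int_0^t \gamma(\eta_u)\,\dd u$ with $\gamma = L(f^2)-2fLf$, and compute, for $s<t$,
\begin{equation*}
\E[N_t-N_s\mid\F_s] = \E[(M_t-M_s)^2\mid \F_s] - \E\!\left[\int_s^t \gamma(\eta_u)\,\dd u \,\Big|\, \F_s\right],
\end{equation*}
using that $M_s$ is $\F_s$-measurable and $\E[M_t-M_s\mid\F_s]=0$. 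Expanding $(M_t-M_s)^2 = (f(\eta_t)-f(\eta_s))^2 - 2(f(\eta_t)-f(\eta_s))\int_s^t Lf(\eta_u)\,\dd u + \big(\int_s^t Lf(\eta_u)\,\dd u\big)^2$ and taking the conditional expectation term by term, Markov plus the martingale properties of $M^{(2)}$ and $M$ express everything through $S_{t-s}(f^2)(\eta_s)$, $f(\eta_s)S_{t-s}f(\eta_s)$, $f(\eta_s)^2$, and iterated semigroup integrals. After regrouping, the $\int_s^t\!\int$ cross-terms combine (via Fubini and the semigroup identity applied once more) with the quadratic $\int\!\int$ term to give exactly $\E[\int_s^t S_{u-s}\gamma(\eta_s)\,\dd u\mid \F_s]$, which by Markov equals $\E[\int_s^t\gamma(\eta_u)\,\dd u\mid\F_s]$. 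Hence $\E[N_t-N_s\mid\F_s]=0$, so $M_t^2-\int_0^t\gamma(\eta_u)\,\dd u$ is a martingale, which by the characterization of the predictable quadratic variation of a c\`adl\`ag martingale (with continuous finite variation part) identifies $\langle M,M\rangle_t = \int_0^t\gamma(\eta_s)\,\dd s$.

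\textbf{Main obstacle.} Step 1 is routine bookkeeping. The real work is in Step 2: one must carefully track the cross terms in the expansion of $(M_t-M_s)^2$ and repeatedly apply Fubini to interchange conditional expectations with the time integrals, then recognize the combination $L(f^2)-2fLf$ appearing after using the semigroup identity twice. Boundedness of $f$, $Lf$, and $L(f^2)$ on the compact manifold $M$ is what makes all of the integrability and interchange steps legitimate.
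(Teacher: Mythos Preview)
The paper does not give its own proof of this proposition; it simply states that ``it is a standard result and it is also proved in~\cite{seppalainen2008translation}.'' Your sketch follows exactly the classical route to the Dynkin martingale formula (semigroup identity for Step~1, then apply Step~1 to $f^2$ and expand the conditional square for Step~2), which is the same argument one finds in that reference. The regrouping in Step~2 is left somewhat impressionistic, but you correctly identify it as the only real bookkeeping obstacle, and the strategy is sound; there is no genuine gap.
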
\sk

\textbf{Application of the proposition}\\
First of all fix a smooth function $\phi$ on $M$. Define for $\eta\in \{0,1\}^{G^N}$: $f^N(\eta)=\frac{1}{N}\sum_{i=1}^N \eta(p_i)\phi(p_i)=\mu(\phi)$, where $\mu=\frac{1}{N}\sum_{i=1}^n\delta_i\eta(p_i)$. Note that since $L^{N}$ is the generator of a random walk on a the finite space of configurations, its domain consists of all functions on those configurations, so in particular $f^N$ and $(f^N)^2$ are in it. Applying theorem~\ref{mgalethm} in this situation shows that $M^{N}$ defined by
\begin{equation}\label{mgaleeq1}
M^{N}_{t}=f^N(\eta^{N}_{t})-f^N(\eta^{N}_0) - \int_0^{t} L^{N}f(\eta^{N}_s)\dd s
\end{equation}
is a martingale with quadratic variation $\left<M^{N},M^{N}\right>_t=\int_0^t\gamma(s)\dd s$, where $\gamma(s)=(L^{N}(f^N)^2-2f^NL^{N}f^N)(\eta_s)$. Some basic manipulations show that 
\begin{equation}
    f^N(\eta^{ij})-f^N(\eta)=-\frac{1}{N}(\phi(p_j)-\phi(p_i))(\eta(p_j)-\eta(p_i).\label{feta}
\end{equation}
Inserting definitions and leaving out some indexes (to keep everything clear) shows that the right hand side of~(\ref{mgaleeq1}) equals
\begin{eqnarray}
    &&\frac{1}{N}\sum_{i=1}^N\phi(p_i)(\eta_t(p_i))-\frac{1}{N}\sum_{i=1}^N\phi(p_i)(\eta_0(p_i))\nonumber\\
    &&-\left(-\int_0^{t} \frac{a(N)}{2N} \sum_{i,j=1}^NW_{ij}^N (\phi(p_j)-\phi(p_i))(\eta_s(p_j)-\eta_s(p_i))\dd s\right)\nonumber\\
    &=&\mu_t^{N}(\phi)-\mu_0^{N}(\phi) - \int_0^{t} \frac{a(N)}{N} \sum_{i,j=1}^NW_{ij}^N(\phi(p_j)-\phi(p_i))\eta_{s}(p_i)\dd s\nonumber\\
    &=&\mu_t^{N}(\phi)-\mu_0^{N}(\phi) - \int_0^{t} \frac{1}{N} \sum_{i=1}^N\eta_{s}(p_i)\left(a(N)\sum_{j=1}^NW_{ij}^N(\phi(p_j)-\phi(p_i))\right)\dd s.\label{maineq}
\end{eqnarray}\sk

\textbf{Using convergence of the generators}\\
By~(\ref{suitableGridAssumption}), we can write for any $p_i$:
\begin{equation}\label{usegenconv}
    a(N)\sum_{j=1}^NW_{ij}^N(\phi(p_j)-\phi(p_i)) = \Delta_M\phi(p_i)+E_{p_i}(N),
\end{equation}
where 
\begin{equation}\label{resttozero}
   E(N):=\frac{1}{N}\sum_{i=1}^N|E_{p_i}(N)|\rightarrow 0 \hspace{1cm}(N\rightarrow\infty). 
\end{equation}
This shows that
\begin{eqnarray*}
    &&\int_0^{t} \frac{1}{N} \sum_{i=1}^N\eta_{s}(p_i)\left(a(N)\sum_{j=1}^NW_{ij}^N(\phi(p_j)-\phi(p_i))\right)\dd s \\
    &=& \int_0^{t} \frac{1}{N} \sum_{i=1}^N\eta_{s}(p_i)\left(\Delta_M\phi(p_i)+E_{p_i}(N)\right)\dd s\\
    &=& \int_0^{t} \frac{1}{N} \sum_{i=1}^N\eta_{s}(p_i)\Delta_M\phi(p_i)\dd s +\int_0^{t} \frac{1}{N} \sum_{i=1}^N\eta_{s}(p_i)E_{p_i}(N)\dd s\\
    &=& \int_0^{t} \mu_s(\Delta_M\phi)\dd s +\int_0^{t} \frac{1}{N} \sum_{i=1}^N\eta_{s}(p_i)E_{p_i}(N)\dd s.
\end{eqnarray*}
Plugging this into~(\ref{maineq}) and~(\ref{mgaleeq1}), we obtain:
\begin{equation}\label{measure_equation}
    \mu_t^{N}(\phi)-\mu_0^{N}(\phi) - \int_0^{t} \mu_s^{N}(\Delta_M\phi)\dd s = M_t^{N} + \int_0^{t} \frac{1}{N} \sum_{i=1}^N\eta_{s}^{N}(p_i)E_{p_i}(N)\dd s,
\end{equation}
so for any $T>0$:
\begin{equation}
    \sup_{0\leq t\leq T}\left|\mu_t^{N}(\phi)-\mu_0^{N}(\phi) - \int_0^{t} \mu_s^{N}(\Delta_M\phi)\dd s \right| \leq \sup_{0\leq t\leq T} \left|M_t^{N}\right| + \sup_{0\leq t\leq T}\left|\int_0^{t} \frac{1}{N} \sum_{i=1}^N\eta_{s}^{N}(p_i)E_{p_i}(N)\dd s\right|.\label{supeq}
\end{equation}
We want to show that this expression converges to $0$ in probability. We will deal with the terms on the right hand side separately. \\
\\
\textbf{The error term}\\
First of all
\begin{eqnarray*}
    \left|\int_0^{t} \frac{1}{N} \sum_{i=1}^N\eta_{s}^{N}(p_i)E_{p_i}(N)\dd s\right|
    &\leq&\int_0^{t} \frac{1}{N} \sum_{i=1}^N|\eta_{s}^{N}(p_i)| |E_{p_i}(N)|\dd s
    \leq \int_0^{t} E(N)\dd s\\
    &=& t E(N),
\end{eqnarray*}
so
\begin{equation*}
     \sup_{0\leq t\leq T}\left|\int_0^{t} \frac{1}{N} \sum_{i=1}^N\eta_{s}(p_i)E_{p_i}(N)\dd s\right|
     \leq TE(N)\rightarrow 0 \quad \text{(by (\ref{resttozero}))}.
\end{equation*}\sk

\textbf{Convergence of the martingale to $0$}\\
Now for the other term. Since the trajectory $t\mapsto \mu_t^{N}$ is cadlag, so is $M^{N}$. Hence by Doob's inequality we see:
\begin{equation}
    \p\left(\sup_{0\leq t\leq T} \left|M_t^{N}\right|>\delta\right)\leq \frac{\E|M_T^{N}|}{\delta}.\label{Doob}
\end{equation}
To show that $\E|M_T^{N}|$ goes to $0$, it suffices to show that $\E\left<M^{N},M^{N}\right>_T$ goes to $0$ (since then $\E\left[(M_T^{N})^2\right]=\E\left<M^{N},M^{N}\right>_T\rightarrow 0$ and hence $\E|M_T^{N}|\rightarrow 0$). This is what the following lemma tells us.
\begin{lemma}\label{quadvar}
For any $T>0$:
\begin{equation*}
    \lim_{N\rightarrow\infty}\E\left<M^{N},M^{N}\right>_T=0.
\end{equation*}
\end{lemma}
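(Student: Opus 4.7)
The plan is to use the carré du champ identity for pure-jump generators. For any $L^N$ of the form $L^Nh(\eta)=\tfrac{a(N)}{2}\sum_{i,j}W_{ij}^N(h(\eta^{ij})-h(\eta))$, a direct computation gives $L^N(h^2)-2hL^Nh=\tfrac{a(N)}{2}\sum_{i,j}W_{ij}^N(h(\eta^{ij})-h(\eta))^2$, so by Proposition~\ref{mgalethm}
\[
\gamma(s)=\frac{a(N)}{2}\sum_{i,j=1}^N W_{ij}^N\bigl(f^N(\eta_s^{ij})-f^N(\eta_s)\bigr)^2.
\]
Plugging in~(\ref{feta}) and using that $\eta_s(p_i)\in\{0,1\}$, so $(\eta_s(p_j)-\eta_s(p_i))^2\le 1$, we obtain the \emph{deterministic} pointwise bound
\[
\gamma(s)\;\le\;\frac{a(N)}{2N^2}\sum_{i,j=1}^N W_{ij}^N\bigl(\phi(p_j)-\phi(p_i)\bigr)^2.
\]

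The remaining task is to show that the right-hand side is of order $1/N$. For this I would exploit the algebraic identity
\[
\bigl(\phi(p_j)-\phi(p_i)\bigr)^2=\bigl(\phi^2(p_j)-\phi^2(p_i)\bigr)-2\phi(p_i)\bigl(\phi(p_j)-\phi(p_i)\bigr),
\]
which reduces a \emph{squared} difference to two ordinary first differences of smooth functions, namely $\phi^2$ and $\phi$. Summing over $j$, multiplying by $a(N)$, averaging over $i$, and invoking the defining property~(\ref{suitableGridAssumption}) of the uniformly approximating grids applied to the smooth functions $\phi$ and $\phi^2$, one gets
\[
\frac{1}{N}\sum_{i=1}^N a(N)\sum_{j=1}^N W_{ij}^N\bigl(\phi(p_j)-\phi(p_i)\bigr)^2\;\xrightarrow[N\to\infty]{}\;\int_M\bigl(\Delta_M\phi^2-2\phi\,\Delta_M\phi\bigr)\,\dd\bar V,
\]
which in any case is bounded by some constant $C=C(\phi)$ uniformly in $N$, since $\phi$, $\Delta_M\phi$ and $\Delta_M\phi^2$ are continuous on the compact manifold $M$.

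Combining the two previous displays yields $\gamma(s)\le C/(2N)$ uniformly in $s$ and in $\omega$, so
\[
\E\langle M^N,M^N\rangle_T=\int_0^T\E\gamma(s)\,\dd s\;\le\;\frac{CT}{2N}\;\longrightarrow\;0,
\]
as desired. The only real subtlety is recognising that although $(\phi(p_j)-\phi(p_i))^2$ is not itself a first difference of a smooth function (so~(\ref{suitableGridAssumption}) does not apply directly), the identity above rewrites it as a combination of first differences of $\phi^2$ and $\phi$ for which the grid hypothesis does apply; the factor $1/N^2$ built into $(f^N(\eta^{ij})-f^N(\eta))^2$ then supplies the decay. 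Everything else (the passage from the exclusion dynamics to the bound $(\eta(p_j)-\eta(p_i))^2\le1$, the carré du champ formula, and boundedness of $\Delta_M\phi$, $\Delta_M\phi^2$) is routine given the compactness of $M$ and the smoothness of $\phi$.
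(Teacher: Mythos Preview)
Your argument is correct and follows essentially the same path as the paper's proof: both compute the carr\'e du champ, use $(\eta(p_j)-\eta(p_i))^2\le 1$ to obtain the deterministic bound $\gamma(s)\le \frac{a(N)}{2N^2}\sum_{i,j}W_{ij}^N(\phi(p_j)-\phi(p_i))^2$, and then show that $\frac{a(N)}{N}\sum_{i,j}W_{ij}^N(\phi(p_j)-\phi(p_i))^2$ stays bounded so that the extra factor $1/N$ forces the decay. The only difference is in this last step. The paper exploits the symmetry $W_{ij}^N=W_{ji}^N$ to get the discrete Dirichlet form identity
\[
\frac{a(N)}{2}\sum_{i,j}W_{ij}^N(\phi(p_j)-\phi(p_i))^2=-\sum_i\phi(p_i)\,a(N)\sum_jW_{ij}^N(\phi(p_j)-\phi(p_i)),
\]
and then applies~(\ref{suitableGridAssumption}) once, to $\phi$. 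Your route via the pointwise identity $(\phi(p_j)-\phi(p_i))^2=(\phi^2(p_j)-\phi^2(p_i))-2\phi(p_i)(\phi(p_j)-\phi(p_i))$ does not invoke symmetry and instead applies~(\ref{suitableGridAssumption}) twice, to $\phi$ and to $\phi^2$; either manipulation yields the required $O(1)$ bound. One minor overclaim: the convergence of $\frac{1}{N}\sum_i\bigl(\Delta_M\phi^2(p_i)-2\phi(p_i)\Delta_M\phi(p_i)\bigr)$ to $\int_M(\Delta_M\phi^2-2\phi\Delta_M\phi)\,\dd\bar V$ would need weak convergence of the empirical measures $\frac{1}{N}\sum_i\delta_{p_i}$ to $\bar V$, which is not part of the abstract definition of uniformly approximating grids. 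But as you yourself note, only uniform boundedness is needed, and that does follow since the averages are dominated by $\|\Delta_M\phi^2\|_\infty+2\|\phi\|_\infty\|\Delta_M\phi\|_\infty$ plus error terms tending to zero.
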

\begin{proof}
Recall that $\left<M^{N},M^{N}\right>_T=\int_0^T (L^{N}(f^N)^2-2f^NL^{N}f^N)(\eta_s) \dd s$.
By writing out, one simply obtains
\begin{equation*}
    (L^{N}(f^N)^2-2f^NL^{N}f^N)(\eta)=\sum_{i,j=1}^N \frac{a(N)}{2} W_{ij}^N (f(\eta^{ij})-f(\eta))^2.
\end{equation*}
Using~(\ref{feta}), we see
\begin{equation*}
    (f(\eta^{ij})-f(\eta))^2\leq \left(\frac{1}{N}(\phi(p_j)-\phi(p_i))(\eta(p_j)-\eta(p_i))\right)^2
    \leq \frac{1}{N^2}(\phi(p_j)-\phi(p_i))^2,
\end{equation*}
since $\eta(p_i)\in\{0,1\}$ for all $i$. This shows that
\begin{eqnarray*}
    0&\leq& \left<M^{N},M^{N}\right>_T=\int_0^T (L^{N}(f^N)^2-2f^NL^{N}f^N)(\eta_s) \dd s \\
    &\leq& \int_0^T \frac{a(N)}{2N^2}  \sum_{i,j=1}^N W_{ij}^N (\phi(p_j)-\phi(p_i))^2\dd s
    = T \frac{a(N)}{2N^2} \sum_{i,j=1}^N W_{ij}^N (\phi(p_j)-\phi(p_i))^2.    
\end{eqnarray*}
This implies that also
\begin{equation}\label{quadvarest}
    0\leq\E\left<M^{N},M^{N}\right>_T\leq T \frac{a(N)}{2N^2} \sum_{i,j=1}^N W_{ij}^N (\phi(p_j)-\phi(p_i))^2.
\end{equation}
We can estimate this term by using~(\ref{resttozero}). Some basic manipulations show that
\begin{eqnarray*}
    &&\frac{a(N)}{2}\sum_{i,j=1}^N W_{ij}^N (\phi(p_j)-\phi(p_i))^2 = - \sum_{i=1}^N \phi(p_i) a(N) \sum_{j=1}^N W_{ij}^N (\phi(p_j)-\phi(p_i)) \\
    &=& - \sum_{i=1}^N \phi(p_i) \left(\Delta_M\phi(p_i)+E_{p_i}(N)\right) = - \sum_{i=1}^N \phi(p_i)\Delta_M\phi(p_i) - \sum_{i=1}^N \phi(p_i)E_{p_i}(N),
\end{eqnarray*}
where the $E_{p_i}$'s are as before. This implies that 
\begin{eqnarray*}
    &&\limsup_{N\rightarrow\infty}\left|\frac{a(N)}{2N^2}\sum_{i,j=1}^N W_{ij}^N (\phi(p_j)-\phi(p_i))^2\right| \leq \limsup_{N\rightarrow\infty}\left\{ \frac{1}{N^2}\sum_{i=1}^N |\phi(p_i)||\Delta_M\phi(p_i)| +\frac{1}{N^2} \sum_{i=1}^N |\phi(p_i)||E_{p_i}(N)|\right\} \\
    &\leq &\limsup_{N\rightarrow\infty}\frac{1}{N} ||\phi||_\infty ||\Delta_M\phi||_\infty + \limsup_{N\rightarrow\infty}\frac{1}{N}||\phi||_\infty E(N)=0,
\end{eqnarray*}
where in the last step we used~(\ref{resttozero}). So we obtain
\begin{equation*}
    \lim_{N\rightarrow\infty}\frac{a(N)}{2N^2} \sum_{i,j=1}^N W_{ij}^N (\phi(p_j)-\phi(p_i))^2 = 0.
\end{equation*} 
Together with~(\ref{quadvarest}) this gives the result.
\end{proof}
We conclude from the lemma that the right hand side of~(\ref{Doob}) goes to zero as $N$ goes to infinity and $\epsilon$ goes to zero, so 
\begin{equation*}
    \lim_{\epsilon\downarrow 0}\lim_{N\rightarrow\infty} \sup_{0\leq t\leq T} \left|M_t^{N}\right| = 0 \hspace{0.3cm} \text{in probability.}
\end{equation*}\sk

\textbf{Convergence of~(\ref{supeq}) to $0$ in probability}\\
Combining everything above and using~(\ref{supeq}), we conclude that 
\begin{equation*}
    \lim_{N\rightarrow\infty}\sup_{0\leq t\leq T}\left|\mu_t^{N}(\phi)-\mu_0^{N}(\phi) - \int_0^{t} \mu_s^{N}(\Delta_M\phi)\dd s \right|=0 \hspace{0.3cm} \text{in probability.}
\end{equation*}
In particular, for any $\delta\geq0$, define
\begin{equation*}
    H^\delta=\left\{ \alpha\in D: \sup_{0\leq t< T}\left|\alpha_t(\phi)-\alpha_0(\phi) - \int_0^{t} \alpha_s(\Delta_M\phi)\dd s \right| \leq \delta\right\}.
\end{equation*}
It can be shown, as in~\cite[Chapter 8]{seppalainen2008translation}, that $H^\delta$ is closed for any $\delta>0$.
Recall from page~\pageref{QNdef} that we write the distribution of $t\mapsto\mu_t^{N}$ as $Q^{N}$.
Then the convergence result above implies that for any $\delta>0$:
\begin{equation*}
    \lim_{N\rightarrow\infty}Q^{N}(H^\delta) = 1.
\end{equation*}\sk

\textbf{Tightness of $(Q^N)_{N=1}^\infty$}\\
We will need that the sequence of distributions $(Q^N)_{N=1}^\infty$ is tight. This can be shown in exactly the same way as~\cite[p.55-56]{kipnis1999scaling}. In fact all the most crucial calculations have already been performed above. 
\begin{lemma}
The sequence of distributions $(Q^N)_{N=1}^\infty$ is tight.
\end{lemma}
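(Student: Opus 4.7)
The plan is to follow the standard Kipnis–Landim strategy, reducing tightness of measure-valued processes to tightness of their real-valued projections, and then verifying Aldous' criterion using the Dynkin martingale decomposition we have already established.

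First, I would exploit compactness of $M$ to simplify the target space. Since $\mu_t^N$ has total mass at most $1$, the trajectories $t\mapsto\mu_t^N$ take values in the compact metrizable subset $\{\nu\in R(M):\nu(M)\leq 1\}$ of $R(M)$ in the vague topology (this is the unit ball in the dual of $C(M)$, compact by Banach–Alaoglu). By Mitoma's theorem, or more elementarily by working in a countable family separating points, it suffices to prove tightness in $D([0,T],\R)$ of the real-valued processes $t\mapsto\mu_t^N(\phi)$ for $\phi$ ranging over a countable family of smooth test functions dense in $C(M)$.

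Second, I would verify Aldous' criterion for a fixed smooth $\phi$. The compact-containment / marginal-tightness condition is immediate since $|\mu_t^N(\phi)|\leq\|\phi\|_\infty$ for all $t,N$. The modulus-of-continuity condition requires showing that for each $\epsilon>0$,
\begin{equation*}
\lim_{\delta\downarrow 0}\limsup_{N\to\infty}\sup_{\tau,\theta}\p\bigl(|\mu_{\tau+\theta}^N(\phi)-\mu_\tau^N(\phi)|>\epsilon\bigr)=0,
\end{equation*}
where the supremum is over stopping times $\tau\leq T$ adapted to the natural filtration and $0\leq\theta\leq\delta$. Using the Dynkin decomposition~(\ref{measure_equation}), I write
\begin{equation*}
\mu_{\tau+\theta}^N(\phi)-\mu_\tau^N(\phi)=\bigl(M_{\tau+\theta}^N-M_\tau^N\bigr)+\int_\tau^{\tau+\theta}\mu_s^N(\Delta_M\phi)\,\dd s+\int_\tau^{\tau+\theta}\frac{1}{N}\sum_{i=1}^N\eta_s^N(p_i)E_{p_i}(N)\,\dd s.
\end{equation*}

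Third, I would bound each piece. The middle integral is trivially bounded by $\delta\|\Delta_M\phi\|_\infty$, and the last integral by $\delta E(N)$, which vanishes by~(\ref{resttozero}). For the martingale increment, the optional stopping theorem applied to $(M^N)^2-\langle M^N,M^N\rangle$ yields
\begin{equation*}
\E\bigl[(M_{\tau+\theta}^N-M_\tau^N)^2\bigr]=\E\bigl[\langle M^N,M^N\rangle_{\tau+\theta}-\langle M^N,M^N\rangle_\tau\bigr]\leq\theta\cdot\frac{a(N)}{2N^2}\sum_{i,j}W_{ij}^N(\phi(p_j)-\phi(p_i))^2,
\end{equation*}
and the right factor tends to $0$ with $N$ by the estimate already derived inside the proof of lemma~\ref{quadvar}. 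Markov's inequality then makes the martingale contribution to Aldous' condition vanish uniformly in $\tau$ as $\delta\downarrow 0$ and $N\to\infty$.

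The main obstacle is not any single estimate (all three bounds are straightforward reuses of bounds established in the preceding analysis), but rather the abstract reduction of tightness on $D([0,\infty),R(M))$ to tightness of the real projections. The cleanest way is to invoke Mitoma's theorem, observing that $R(M)$ equipped with the vague topology embeds continuously into $\R^{\{\phi_k\}}$ for a countable dense $\{\phi_k\}\subset C^\infty(M)$ and that the relevant trajectories live in a compact subset, so that tightness of all finite projections is equivalent to tightness of the joint law. Once this reduction is in hand, Aldous' criterion together with the three bounds above finishes the proof.
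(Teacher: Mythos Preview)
Your proposal is correct and follows essentially the same approach as the paper: reduce to real-valued projections $t\mapsto\mu_t^N(\phi)$ for smooth $\phi$ (the paper cites \cite[Chapter 4, Prop.~1.7]{kipnis1999scaling} where you invoke Mitoma/countable density), then verify Aldous' criterion by splitting $\mu_{\tau+\theta}^N(\phi)-\mu_\tau^N(\phi)$ via the Dynkin decomposition~(\ref{measure_equation}) into the drift, error, and martingale parts and bounding each exactly as you describe. The only cosmetic difference is that the paper uses boundedness (rather than vanishing) of $\frac{a(N)}{2N^2}\sum_{i,j}W_{ij}^N(\phi(p_j)-\phi(p_i))^2$ to control the martingale increment, but either suffices.
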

\begin{proof}
It needs to be shown that the two conditions of~\cite[Chapter 4 Thm 1.3]{kipnis1999scaling} are satisfied. Note that for any continuous $f$ we can map a path $\nu\in D([0,T],R(M))$ to the path in $D([0,T],\R)$ given by $t\mapsto \nu_t(f)$. This induces a sequence of distributions $Q^Nf^{-1}$ on $D([0,T],\R)$. By~\cite[Chapter 4 Prop 1.7]{kipnis1999scaling} and the fact that the smooth functions are uniformly dense in the set of continuous functions on a manifold, it suffices to prove the conditions of~\cite[Chapter 4 Thm 1.3]{kipnis1999scaling} for \{$Q^Nf^{-1},N\geq 0\}$ for all smooth $f$. Fix such $f$. Since each path stays in the set of sub-probability measures, the first condition is easily satisfied. For the second condition, it suffices to prove Aldous' tightness criterion, i.e. that
\begin{equation}\label{aldous}
    \lim_{\gamma\rightarrow 0} \limsup_{N\rightarrow \infty}\sup_{\tau\in \mathcal{I}_T, \theta\leq \gamma} Q^Nf^{-1} \left[ \left|\mu^N_\tau(f)-\mu^N_{\tau+\theta}(f)\right|>\epsilon\right]=0,
\end{equation}
where $\mathcal{I}_T$ denotes the set of all stopping times bounded by $T$. We know from equation~(\ref{measure_equation}) that there exists a martingale $M$ (depending on $f$) such that
\begin{equation*}
    \mu_t^{N}(f)-\mu_0^{N}(f) - \underbrace{\int_0^{t} \mu_s^{N}(\Delta_M f)\dd s}_\text{(I)} = \underbrace{M_t^{N}}_\text{(II)} + \underbrace{\int_0^{t} \frac{1}{N} \sum_{i=1}^N\eta_{s}^{N}(p_i)E_{p_i}(N)\dd s}_\text{(III)}.
\end{equation*}
It therefore suffices to check the tightness criterion for the RHS of this equation and for the integral on the LHS (since the only other term is constant). Now we can make the following estimations. \\
\\
\textbf{(I)}. First of all, since $\mu_s^N$ is a sub-probability measure and $\Delta_M f$ is bounded:
\begin{equation*}
    \left|\int_0^{\tau+\theta} \mu_s^{N}(\Delta_M f)\dd s - \int_0^{\tau} \mu_s^{N}(\Delta_M f)\dd s\right|\leq \theta ||\Delta_M f||_\infty.
\end{equation*}
This implies that
\begin{eqnarray*}
    &&\sup_{\tau\in \mathcal{I}_T, \theta\leq \gamma} Q^Nf^{-1} \left[ \left|\int_0^{\tau+\theta} \mu_s^{N}(\Delta_M f)\dd s - \int_0^{\tau} \mu_s^{N}(\Delta_M f)\dd s\right|>\epsilon\right] \\
    &\leq&  Q^Nf^{-1} \left[ \sup_{\tau\in \mathcal{I}_T, \theta\leq \gamma} \left|\int_0^{\tau+\theta} \mu_s^{N}(\Delta_M f)\dd s - \int_0^{\tau} \mu_s^{N}(\Delta_M f)\dd s\right|>\epsilon\right] \\
    &\leq& Q^Nf^{-1} \left[ \sup_{\tau\in \mathcal{I}_T, \theta\leq \gamma} \theta ||\Delta_M f||_\infty >\epsilon\right] \leq  Q^Nf^{-1} \left[ \gamma ||\Delta_M f||_\infty >\epsilon\right] = \1_{\gamma ||\Delta_M f||_\infty >\epsilon}
\end{eqnarray*}
This implies that the limit in~(\ref{aldous}) is smaller than
\begin{equation*}
    \lim_{\gamma\rightarrow 0} \limsup_{N\rightarrow \infty} \1_{\gamma ||\Delta_M f||_\infty >\epsilon} = \lim_{\gamma\rightarrow 0} \1_{\gamma ||\Delta_M f||_\infty >\epsilon} = 0,
\end{equation*}
so (I) satisfies the tightness criterion.\\
\\
\textbf{(II)}. Further, the calculations above show that
\begin{equation*}
    \left|\int_0^{\tau+\theta} \frac{1}{N} \sum_{i=1}^N\eta_{s}^{N}(p_i)E_{p_i}(N)\dd s - \int_0^{\tau} \frac{1}{N} \sum_{i=1}^N\eta_{s}^{N}(p_i)E_{p_i}(N)\dd s\right|\leq \theta E(N)\leq \theta K.
\end{equation*}
Here $K$ is some positive number which exists, because of~(\ref{resttozero}). This part satisfies~(\ref{aldous}) in the same way as the previous part.\\
\\
\textbf{(III)}. Now for the last term, we first estimate $\E\left[(M^N_{\tau+\theta}-M^N_{\tau})^2\right]$ (as is done in~\cite[p.56]{kipnis1999scaling}). Naturally, the expectation is taken with respect to $Q^Nf^{-1}$. Note that because of the martingale property:
\begin{equation*}
    0\leq\E\left[(M^N_{\tau+\theta}-M^N_{\tau})^2\right] = \E(M^N_{\tau+\theta})^2-\E(M^N_{\tau})^2 = \E\left<M^N,M^N\right>_{\tau+\theta}-\E\left<M^N,M^N\right>_{\tau}.
\end{equation*}
We see from the calculations in the proof of lemma~\ref{quadvar} that
\begin{equation*}
    \E\left<M^N,M^N\right>_{\tau+\theta}-\E\left<M^N,M^N\right>_{\tau} \leq \theta \frac{a(N)}{2N^2} \sum_{i,j=1}^N W_{ij}^N (\phi(p_j)-\phi(p_i))^2.
\end{equation*}
Since the term after $\theta$ converges to $0$, we see that it is bounded by some constant $\alpha$. By Chebyshev's inequality we obtain:
\begin{equation*}
    Q^Nf^{-1} \left(|M^N_{\tau+\theta}-M^N_{\tau}|>\epsilon\right)\leq \frac{\E\left[(M^N_{\tau+\theta}-M^N_{\tau})^2\right]}{\epsilon^2} \leq \frac{\theta \alpha}{\epsilon^2}.
\end{equation*}
Since
\begin{equation*}
    \lim_{\gamma\rightarrow 0} \limsup_{N\rightarrow \infty}\sup_{\tau\in \mathcal{I}_T, \theta\leq \gamma} \frac{\theta \alpha}{\epsilon^2} = \lim_{\gamma\rightarrow 0} \limsup_{N\rightarrow \infty} \frac{\gamma \alpha}{\epsilon^2} =  \lim_{\gamma\rightarrow 0} \frac{\gamma \alpha}{\epsilon^2} = 0,
\end{equation*}
this part satisfies~(\ref{aldous}) too.
\end{proof}\sk

\textbf{Limit distribution}\\
We have just shown that $(Q^N)_{N=1}^\infty$ is a tight sequence of measures on $D$. This implies that every one of its subsequences is also tight and therefore has a weakly convergent subsequence. If these all have the same limit, then it follows from a basic result in metric spaces that the sequence itself converges weakly to that limit. It therefore suffices for weak convergence of $(Q^N)_{N=1}^\infty$ to show that every weakly convergent subsequence of $(Q^N)_{N=1}^\infty$ has the same limit. Let $(Q^{N_k})_{k=1}^\infty$ be any weakly convergent subsequence and denote its limit by $Q$. Since $H$ is closed, we know for any $\delta>0$ that
\begin{equation*}
    Q(H^\delta)\geq \limsup_{k\rightarrow\infty} Q^{N_k}(H^\delta)=1,
\end{equation*}
so $Q(H^\delta)=1$. Since this holds for any $\delta>0$, we see 
\begin{equation*}
    Q(H^0)=Q\left(\bigcap_{m=1}^\infty H^\frac{1}{m}\right)=1-Q\left(\bigcup_{m=1}^\infty (H^\frac{1}{m})^C\right)\geq 1- \sum_{m=1}^\infty Q\left(\left(H^\frac{1}{m}\right)^C\right)=1.
\end{equation*}
This means that
\begin{equation*}
    Q\left( \alpha\in D: \sup_{0\leq t< T}\left|\alpha_t(\phi)-\alpha_0(\phi) - \int_0^{t} \alpha_s(\Delta_M\phi)\dd s \right| =0 \right)=1.
\end{equation*}
By doing this for a countable set of functions $\phi$ that is dense in $C^\infty$ with respect to $||\cdot||_\infty+||\Delta_M\cdot||_\infty$ and arguing that this implies the same for any smooth function we see:
\begin{equation*}
    Q\left( \alpha\in D:  \sup_{0\leq t< T}\left|\alpha_t(\phi)-\alpha_0(\phi) - \int_0^{t} \alpha_s(\Delta_M\phi)\dd s \right| =0 \hspace{0.3cm}\forall\phi\in C^\infty \right)=1.
\end{equation*}
Since this holds for any $T>0$, we see that $Q-$a.s. for every $t\geq 0$ and for all smooth $\phi$:
\begin{equation}
    \alpha_t(\phi)-\alpha_0(\phi) = \int_0^{t} \alpha_s(\Delta_M\phi)\dd s.\label{whealpha}
\end{equation}
Note that~(\ref{whealpha}) is a weak, measure-valued formulation of the heat equation. We will argue and use shortly that this equation uniquely determines the trajectory $t\mapsto \alpha_t$ given the initial conditions.\\
\\
\textbf{Continuity}\\
To obtain uniqueness, we first need to know that the trajectory is continuous. For the $\R^n$ case this is shown in~\cite[Lemma 8.6]{seppalainen2008translation}. The result can be shown in exactly the same way in our case, so we will not provide all the details. The topology on the space of measures is generated by the following metric:
\begin{equation*}
    d_M(\mu,\nu) = \sum_{j=1}^\infty 2^{-j} \left( 1\wedge \left|\mu(\phi_j)-\nu(\phi_j)\right|\right),
\end{equation*}
for some sequence $\phi_j\in C^\infty(M)$. It suffices to control
\begin{equation*}
    \sup_{t\geq 0} \e^{-t}d_M(\mu^N_t,\mu^N_{t-}).
\end{equation*}
Doing that can be reduced to showing that for any $T>0$ and $\psi\in C^\infty(M)$:
\begin{equation*}
    \lim_{\delta\rightarrow 0}\limsup_{n\rightarrow\infty} \E\left[\sup_{0\leq s,t\leq T, |s-t|<\delta} \left|\mu^N_s(\phi)-\mu^N_t(\phi)\right|^2\right].
\end{equation*}
This can be done by using the Dynkin martingale representation~(\ref{measure_equation}) and bounding all the differences as in the proof of tightness. The only term that needs some attention is $(M^N_t-M^N_s)^2$, but it can be controlled using Doob's maximal inequality:
\begin{equation*}
    \E \left[\sup_{0\leq s,t\leq T, |s-t|<\delta} (M^N_t-M^N_s)^2\right] \leq \E \left[\sup_{0\leq t\leq T} 4 (M^N_t)^2\right] \leq 16 \E (M^N_T)^2 = 16  \E \left<M^N,M^N\right>_T,
\end{equation*}
which goes to zero according to lemma~\ref{quadvar}.\\


\textbf{Uniqueness}\\
To obtain uniqueness of limits of subsequences of $Q^N$, we need to know that there is a unique continuous solution to~(\ref{whealpha}) that has initial condition $\rho_0\dd\bar V$. We know that $t\mapsto\rho_t\dd \bar V$ is a continuous solution to~(\ref{whealpha}) with the right initial condition if $t\mapsto \rho_t$ satisfies the heat equation with initial condition $\rho_0$. Therefore it suffices to show that this solution is unique. This result is proven with a boundedness condition in~\cite[Thm A.28]{seppalainen2008translation}. The main idea of the proof is that the measure valued path $\alpha_t$ is smoothed by taking its convolution with some smooth kernel with bandwidth $\epsilon>0$. Then it is shown that this trajectory of functions satisfies the heat equation with initial condition $\rho_0$ in the strong sense (by interchanging integral and derivatives and using that these identities are known for sufficiently many $\phi$), so it must equal $t\mapsto\rho_t$. Then by letting $\epsilon$ go to zero, it is shown that the original trajectory $t\mapsto\alpha_t$ must equal $t\mapsto\rho_t\dd \lambda$, where $\lambda$ is the Lebesgue measure. \\
\\
To obtain the analogous result in our setting, we cannot use convolution, since this is not well-defined on a manifold. However, we can smooth the measures by integrating the heat kernel at time $\epsilon$ with respect to the measures. Using this smoothing, we can follow exactly the same approach, i.e. showing that the smoothed trajectory satisfies the heat equation in a strong sense and then letting $\epsilon$ go to $0$. The boundedness condition is a bound on volumes, which is needed for some estimations in~\cite{seppalainen2008translation} and for the uniqueness of the strong solution to the heat equation. Since we work in a compact setting and with probability measures, such a bound is not necessary. The uniqueness of the strong solution to the heat equation is a standard result in our case (so for a compact and connected Riemannian manifold). See for instance \cite[Thm 8.18]{grigoryan2009heat}. Results on the heat kernel on a manifold can also be found in~\cite{grigoryan2009heat}.\\
\\
\textbf{Conclusion}\\
Now let $t\mapsto\rho_t$ be the solution to the heat equation on $M$ with initial condition $\rho_0$ and call $\beta:=(t\mapsto\rho_t\dd \bar V)$. Recall that~(\ref{whealpha}) holds $Q-$a.s. By the uniqueness result above, this implies that $Q$ is a Dirac distribution with $\beta$ as its support. Since this does not depend on $Q^{N_k}$, it must be the same for any convergent subsequence, so with arguments given above, we conclude that $Q^N\rightarrow Q$ weakly. Let $\gamma^N$ denote the random trajectory $t\mapsto \mu^{N}_t$. Since $Q$ is degenerate, the weak convergence implies convergence in probability, so $\gamma^N\rightarrow \beta$ in probability. This is what we wanted to show. 
\section*{Acknowledgement}
The authors thank Rik Versendaal for helpful discussions. The support of the grant 613.009.112 of the Netherlands Organisation for Scientific Research (NWO) is gratefully acknowledged.

\phantomsection
\bibliographystyle{abbrvnat}
\bibliography{refs}

\section*{Appendix}

\begin{lemma*}\label{eventuallyconnected}
Let $(p_i)_{i=1}^\infty$ be a sequence for which the empirical measures converge to the volume measure in the Kantorovich sense. Define $\epsilon(N)$, $W^N_{ij}$ and $k$ as in section~\ref{1modmot}.
Additionally suppose that there exists some $\alpha>0$ such that $k(x)>0$ for all $x\leq \alpha$. Say that there is an edge between $p_i$ and $p_j$ whenever $W^N_{ij}>0$. Then the corresponding graphs are eventually connected (in other words: there is some $N_0$ such that for all $N\leq N_0$ $V_N$ with edges as just defined is connected).
\end{lemma*}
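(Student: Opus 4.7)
The approach is to show two things: (a) for $N$ large, every small geodesic ball on $M$ contains a grid point, and (b) any two grid points can be joined by a chain of edges by walking along a geodesic. The hypothesis that $k > 0$ on $[0,\alpha]$ means that an edge exists between $p_i$ and $p_j$ whenever $d(p_i,p_j) \le \alpha\epsilon$, so (b) will follow from (a) by a standard chaining argument.

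The key step is (a), which I would phrase as the following claim: there exists $N_0$ such that for all $N \ge N_0$ and every $q \in M$, the ball $B_d(q,\alpha\epsilon/4)$ contains at least one point of $V_N$. To prove this, suppose for contradiction that some $B_d(q,r)$ with $r = \alpha\epsilon/4$ contains no grid point. Test $W_1$ against the $1$-Lipschitz function $f(p) = \max\{0, r - d(p,q)\}$, which vanishes outside the ball. Then $\int f\,\dd\mu^N = 0$, while on $B_d(q,r/2)$ we have $f \ge r/2$, giving $\int f\,\dd\bar V \ge (r/2)\,\bar V(B_d(q,r/2))$. Because $M$ is compact, there is a uniform lower bound $\bar V(B_d(q,s)) \ge c_0 s^n$ for all $q\in M$ and all $s$ below the injectivity radius (a standard fact, using that on a compact manifold curvature is bounded and geodesic balls are comparable to Euclidean balls in normal coordinates). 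Thus $W_1(\mu^N,\bar V) \ge c_1 r^{n+1} = c_2 \epsilon^{n+1}$. But by the definition~(\ref{defepsilon}) of $\epsilon(N)$ we have $W_1(\mu^N,\bar V) \le \epsilon^{4+n}$, so $c_2 \epsilon^{n+1} \le \epsilon^{n+4}$, i.e.\ $c_2 \le \epsilon^3$, which fails for $\epsilon$ small. This contradiction proves the claim.

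Once (a) is available, fix $N \ge N_0$ and two grid points $p_i,p_j \in V_N$. Since $M$ is compact and connected, it is geodesically complete, so pick a minimizing geodesic $\gamma: [0,L] \to M$ from $p_i$ to $p_j$ of length $L \le \mathrm{diam}(M) < \infty$. Choose $m = \lceil 2L/(\alpha\epsilon) \rceil$ and set $q_k = \gamma(kL/m)$ for $k=0,\ldots,m$, so that $d(q_k,q_{k+1}) \le \alpha\epsilon/2$ and $q_0 = p_i$, $q_m = p_j$. By (a), for each interior $k$ there is a grid point $r_k \in V_N$ with $d(r_k,q_k) \le \alpha\epsilon/4$; set $r_0 = p_i$ and $r_m = p_j$. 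The triangle inequality gives
\begin{equation*}
    d(r_k,r_{k+1}) \le d(r_k,q_k) + d(q_k,q_{k+1}) + d(q_{k+1},r_{k+1}) \le \tfrac{\alpha\epsilon}{4} + \tfrac{\alpha\epsilon}{2} + \tfrac{\alpha\epsilon}{4} = \alpha\epsilon,
\end{equation*}
so $d(r_k,r_{k+1})/\epsilon \le \alpha$ and hence $W^N_{r_k r_{k+1}} = k(d(r_k,r_{k+1})/\epsilon) > 0$ by the assumption on $k$. Thus $r_0,r_1,\ldots,r_m$ is an edge-path in the graph from $p_i$ to $p_j$, proving connectedness.

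The only delicate part is step (a); everything else is a direct chaining argument. The main obstacle is controlling the dimensional factor: one must check that the volume lower bound $\bar V(B_d(q,s)) \ge c_0 s^n$ holds \emph{uniformly} in $q$ and $s$ (for $s$ below a global injectivity radius $\iota(M) > 0$, which exists because $M$ is compact). Both constants $c_0$ and $\iota(M)$ depend only on $M$, not on $N$, so the contradiction $c_2 \le \epsilon^3$ is legitimate once $\epsilon$ is small enough that $\alpha\epsilon/4 < \iota(M)$. This exploits precisely the gap $4 + n > n + 1$ built into the definition of $\epsilon(N)$ in~(\ref{defepsilon}).
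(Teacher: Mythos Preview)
Your proof is correct and shares its analytic core with the paper's argument: both locate an empty geodesic ball, test $W_1$ against the $1$-Lipschitz tent function supported on that ball, invoke a uniform lower volume bound $\bar V(B_d(q,s))\ge c_0 s^d$, and compare the resulting lower bound on $W_1$ against the upper bound $W_1\le \epsilon^{4+d}$ built into the definition of $\epsilon(N)$.

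The organization differs. The paper introduces the connectivity threshold $\beta_N=\inf\{\beta: G_N(\beta)\text{ connected}\}$, argues that at this threshold there is a critical edge whose midpoint $s_N$ has $B(s_N,\beta_N/4)\cap V_N=\varnothing$, and then deduces $\epsilon_N\ge C\beta_N^{(d+1)/(d+4)}$, hence $\alpha\epsilon_N\ge\beta_N$ for large $N$. You instead prove a uniform density statement (every ball of radius $\alpha\epsilon/4$ meets $V_N$) and then run an explicit geodesic chaining to build an edge-path between any two grid points. Your route is slightly longer but more constructive---it actually exhibits the connecting path---while the paper's threshold argument is more compact and avoids the chaining step altogether. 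Both exploit exactly the exponent gap $4+d>d+1$ in~(\ref{defepsilon}).
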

\begin{proof}
Define
\begin{eqnarray*}
    G_N(\beta) &:=& \text{the graph that is obtained from $V_N$ by putting an edge between vertices at distance }\leq \beta\\
    \beta_N&:=&\inf\{\beta\geq 0: G_N(\beta) \text{ is connected}\}.
\end{eqnarray*}
Since $G_N(0)$ is not connected (for $N>1$), $G_N(\sup_{p,q\in M}d(p,q))$ is connected and $G_N(\beta_1)$ contains all edges of $G_N(\beta_2)$ for $\beta_1\geq \beta_2$ it is clear that $\beta_N$ is a finite number strictly larger than $0$. Further note that $G_N(\beta_N)$ is connected (so the infimum is actually a minimum).

Now note that there must be two points $p',q'\in V_N$ such that $p,q$ have an edge between them for $\beta=\beta_N$ and are not connected for $\beta<\beta_N$ (we call $p$ and $q$ connected if there is a path from $p$ to $q$). Indeed if any pair $p,q\in V_N$ that has an edge between them for $\beta=\beta_N$ is still connected by some path for some $\beta_{pq}<\beta_N$, we see that for $\beta'=\sup_{p,q}\beta_{pq}<\beta_N$ the graph $G_N(\beta')$ is connected, which contradicts the definition of $\beta_N$ (note that the supremum ranges over a finite amount of numbers, since $V_N$ is finite). Fix such $p',q'\in V_N$.

Now let $s_N$ be a point on $M$ such that $d(p',s_N)=d(q',s_N)=\beta_N/2.$ Then $B(s_N,\beta_N/4)$ does not contain any point of $V_N$ (since by the triangle inequality such point would have distance $\leq 3\beta_N/4$ to both $p'$ and $q'$ so $p'$ and $q'$ would be connected to each other via this point in $G_N(3\beta_N/4)$, which contradicts the choice of $p'$ and $q'$).

Now we define the following function $l_N:M\rightarrow \R$
\begin{equation*}
    l_N(p) = \begin{cases}    d(p,s_N)-\frac{\beta_N}{4} & p\in B\left(s_N,\frac{\beta_N}{4}\right)\\ 0 & \text{otherwise} \end{cases}
\end{equation*}
It is easy to see that $|l_N(p)-l_N(q)|\leq d(p,q)$, so $l_N$ is Lipschitz with $L_{l_N}\leq 1$. This implies that
\begin{equation*}
    W_1(\mu^N,\overline V)\geq \int l_N \dd \mu_N - \int l_N \dd \overline V.
\end{equation*}
Since $l_N$ is only non-zero on $B(s_N,\beta_N/4)$ and this set does not contain points of $V_N$, we see that 
\begin{equation*}
    \int l_N \dd \mu_N=0.
\end{equation*}
Further, since $l_N$ is non-positive and $l_N\leq -\beta_N/8$ on $B(s_N,\beta_N/8$, we see that
\begin{equation*}
    \int l_N \dd \overline V \leq -\overline V\left(B\left(s_N,\frac{\beta_N}{8}\right)\right) \frac{\beta_N}{8},
\end{equation*}
so we conclude that $W_1(\mu^N,\overline V)\geq \overline V(B(s_N,\beta_N/8)) \beta_N/8$. Since $W_1(\mu^N,\overline V)$ goes to zero, it is easy to deduce from this inequality that $\beta_N\rightarrow 0$. Hence there are constants $C',C''>0$ (not depending on $s_N$), such that for $N$ large enough 
\begin{equation*}
    W_1(\mu^N,\overline V)\geq \overline V\left(B\left(s_N,\frac{\beta_N}{8}\right)\right) \frac{\beta_N}{8} \geq C''\left(\frac{\beta_N}{8}\right)^d \frac{\beta_N}{8} = C' \beta_N^{d+1}.
\end{equation*}
Now we see there is a $C>0$ such that for $N$ large enough
\begin{equation*}
    \epsilon_N=\left(\sup_{m\geq N} W_1(\mu^m,\bar V)\right)^{\frac{1}{4+d}} \geq W_1(\mu^N,\overline V)^{\frac{1}{4+d}} \geq C \beta_N^{\frac{d+1}{d+4}}.
\end{equation*}
This implies that there is some $N_0$ such that for all $N\geq N_0$ $\alpha\epsilon_N\geq\beta_N$. By our choice of $k$, all points at distance $\alpha\epsilon_N$ or less are joined by an edge, so this inequality combined with the definition of $\beta_N$ shows that for all $N\geq N_0$ $V_N$ with edges as defined in the lemma statement is connected.
\end{proof}
\end{document}